\title{\large{\bf  Global weighted Lorentz estimates of oblique tangential derivative problems for weakly convex fully nonlinear operators}}
\author{\it by \smallskip \\
 Junior da S. Bessa\footnote{\noindent Universidade Federal do Cear\'{a}. Department of Mathematics. Fortaleza - CE, Brazil. \noindent \texttt{E-mail address: \url{junior.bessa@alu.ufc.br}}}, Gleydson C. Ricarte \footnote{\noindent Universidade Federal do Cear\'{a}. Department of Mathematics. Fortaleza - CE, Brazil. \noindent \texttt{E-mail address: \url{ricarte@mat.ufc.br}}}}
\newlength{\hchng}
\newlength{\vchng}
\def \R {\mathbb{R}}
\def \div {\mathrm{div}}
\def \loc {\mathrm{loc}}
\newcommand{\pe}{Question}
\newcommand{\defeq}{\mathrel{\mathop:}=}
\newtheorem{theorem}{Theorem}[section]
\newtheorem{lemma}[theorem]{Lemma}
\newtheorem{proposition}[theorem]{Proposition}
\newtheorem{corollary}[theorem]{Corollary}
\theoremstyle{definition}
\newtheorem{definition}[theorem]{Definition}
\newtheorem{example}[theorem]{Example}
\theoremstyle{remark}
\newtheorem{remark}[theorem]{Remark}
\numberwithin{equation}{section}
\newcommand{\intav}[1]{\mathchoice {\mathop{\vrule width 6pt height 3 pt depth  -2.5pt
\kern -8pt \intop}\nolimits_{\kern -6pt#1}} {\mathop{\vrule width
5pt height 3  pt depth -2.6pt \kern -6pt \intop}\nolimits_{#1}}
{\mathop{\vrule width 5pt height 3 pt depth -2.6pt \kern -6pt
\intop}\nolimits_{#1}} {\mathop{\vrule width 5pt height 3 pt depth
-2.6pt \kern -6pt \intop}\nolimits_{#1}}}
\begin{document}
\maketitle

\begin{abstract}

In this work, we develop weighted Lorentz-Sobolev estimates for viscosity solutions of fully nonlinear elliptic equations with oblique boundary condition under weakened convexity conditions in the following configuration:
$$
\left\{
\begin{array}{rclcl}
 F(D^2u,Du,u,x) &=& f(x)& \mbox{in} &   \Omega \\
 \beta \cdot Du + \gamma u&=& g &\mbox{on}& \partial \Omega,
\end{array}
\right.
$$
where $\Omega$ is a bounded domain in $\mathbb{R}^{n}$ ($n\ge 2$), under suitable assumptions on the source term  $f$, data $\beta, \gamma$ and $g$. In addition, we obtain Lorentz-Sobolev estimates for solutions to the obstacle problem and others applications.

\medskip
\noindent \textbf{Keywords:} Hessian estimates, Weighted Lorentz spaces, Obstacle problem, oblique boundary conditions, relaxed convexity assumptions.
\vspace{0.2cm}

\noindent \textbf{AMS Subject Classification:} 35B65, 35J25, 35J60, 46E30.
\end{abstract}

\section{Introduction}

\hspace{0.4cm} 

The paper deals with global estimates of the tangential oblique derivative problem for fully nonlinear elliptic operators in the context of weighted Lorentz spaces (See Definition \ref{lorentzspaces}) under weak convexity conditions. More precisely, let $\Omega\subset \mathbb{R}^{n}$, $n\geq 2$, be a bounded domain with boundary regular and consider the following oblique boundary problem for second-order fully nonlinear elliptic equation:
\begin{equation}\label{1.1}
\left\{
\begin{array}{rclcl}
F(D^2u,Du,u,x) &=& f(x)& \mbox{in} &   \Omega \\
\mathfrak{B}(Du,u,x) &=& g(x) &\mbox{on}& \partial \Omega,
\end{array}
\right.
\end{equation}
where $F:\text{Sym(n)}\times \R^n \times \R \times \Omega \to \R$, $\text{Sym(n)}$ will denote the space of $n \times n$ real symmetric matrices, is assumed to be a second-order operator with uniformly elliptic structure, i.e., there exists {\it ellipticity} constants $0 < \lambda \le \Lambda < \infty$ such that 
\begin{equation}\label{Unif.Ellip.}
	\lambda\|\mathrm{Y}\|\leq F(\mathrm{X}+\mathrm{Y}, \varsigma, s, x)-F(\mathrm{X}, \varsigma, s, x) \leq \Lambda \|\mathrm{Y}\|
\end{equation}
for every $\mathrm{X}, \mathrm{Y} \in \text{Sym(n)}$ with $\mathrm{Y} \ge 0$ (it is understood that $Y$ is a non-negative matrix defined) and $(\varsigma, s, x)\in \mathbb{R}^n \times \R \times \Omega $, and $\mathfrak{B}(Du,u,x) = \beta(x) \cdot Du + \gamma(x) u(x)$ is a boundary operator, where $\beta:\partial \Omega\longrightarrow\mathbb{R}^{n}$ is a unit vector field and $\gamma, g:\partial \Omega\longrightarrow \mathbb{R}$ are a given real-functions. Under regularity suitable conditions on $\beta$,$\gamma$, $g$, we prove weighted Lorentz global estimates for Hessian of viscosity solutions of the problem \eqref{1.1}, when the source term belongs to the respective functional space, under asymptotic conditions of the operator $F$.

The boundary operator $\mathcal{B}$ is  describes by directional derivative in the direction of unit vector field $\beta$ defined in $\partial\Omega$ and the complementar function $\gamma$. Throughout this work, we will assume that there exists positive constant $\mu_{0}$ such that $\beta\cdot \textbf{n}\geq \mu_{0}$ in $\partial\Omega$, where is the outward normal vector of $\Omega$. Such a condition is necessary to guarantee that the problem \eqref{1.1} is well-posed due to the Shapiro–Lopatinskii complementary condition (cf. \cite{MaPaVi} for more details).

The interest in the problem with oblique condition is due to applications in areas beyond mathematics itself, such as Mathematical Physics and Applied Mathematics in general. For example in the theory of stochastic processes. Precisely, models of the form \eqref{1.1}  describes analitically in the interior of $\Omega$, by elliptic operator $F$, a strong Markov process with continuous paths such as Brownian motion. While, in the boundary operator $\mathfrak{B}$,    
\begin{eqnarray}\label{condicaoobliqua}
\mathfrak{B}(Du,u,x)=\beta(x)\cdot Du(x)+\gamma u(x)= g
\end{eqnarray}
the first term on the left side of \eqref{condicaoobliqua} describes the reflection of the process along of the points $\beta(x)$ which are transversal to $\partial\Omega$ and the diffusion along the boundary where $\beta$ becomes tangential to $\partial\Omega$. The second term is related to the absorption phenomenon of the process on the boundary. Furthermore, the theory of PDEs with oblique boundary conditions has a range of classic examples besides this one, such as other important applications in the mechanics of celestial bodies, stochastic control theory, capillary surfaces, reflected shocks in transonic flows and so on (cf. \cite{Lieberman}).
\paragraph{1.1. State-of-the-Art: nonlinear models and weighted Lorentz spaces.} Regarding nonlinear models and weighted Lorentz estimates, we will briefly comment on some relevant literature and their connections with interior/boundary regularity and general boundary conditions.

It is important to emphasize that convexity assumptions with respect to second-order derivatives play an essential role in obtaining of Hessian estimates. In the context of $W^{2,p}$-regularity to Fully nonlinear equations for the non-obstacle type problem, the complete study, with all of its variants, is a major line of investigation. The typical problem is
\begin{equation} \label{F1}
F(D^2 u)=0 \quad \textrm{in} \quad \mathrm{B}_1,
\end{equation}
where $F$ is uniformly elliptic, certainly is another very challenging and important subject of research in this new line of investigation. As a matter of fact, under a concavity (or convexity) assumption the seminal paper of Caffarelli, L.A. \cite{Caff1}.  Thus, a relevant problem in the theory is  to determine some structural conditions on $F$ other than the uniform ellipticity (in between concavity or convexity and no hypotheses) that guarantee higher order $W^{2,p}$. With this question in mind, a reasonable question in this context is the following: 
 \begin{center} 
 $\pe$:{\sl \, ``Which assumptions on $F$ guarantee that solutions of  \eqref{F1}, for $f$ is a suitable function space, $D^2 u \in L^p$?  }
 \end{center}
The general question of the best assumptions on $F$ ensuring this of regularity seemed unclear. The first advances in the question of find the minimal assumptions between convexity/concavity of the operator $F$ in the second derivatives was recently developed by Pimentel and Teixeira  in \cite{PT} in the context of $W^{2,p}$ interior estimates to \eqref{F1} and then da Silva and Ricarte in \cite{daSR19} in the global context (i.e., with boundary condition $u= \varphi$ in $\partial \Omega$).

In context of oblique boundary problem for fully nonlinear elliptic equations we can mention Byun and Han, in \cite{BJ} for $W^{2,p}$-global estimates for models of the form \eqref{1.1} with $\gamma=g=0$, under conditions of convexity in the governing operator. Inspired by this work, very recently, and mostly using the machinery developed in \cite{BJ}, Bessa et. al. obtained in \cite{Bessa},  $W^{2,p}$ regularity for $L^{p}$-viscosity solutions of \eqref{1.1} on a weaker convexity assumption for the second-order operator $F$.  Moreover, the authors also guaranteed some applications such as $p$-BMO type estimates for the Hessian, obstacle problem and density of solution (cf. \cite{BJ,BJ1} for
related results).

On the other hand, weighted Lorentz estimates for solutions of PDE's is a topic that has been the subject of interest for several researchers in recent years. This is due to the generalization of spaces in relation to Lebesgue spaces $L^{p}$'s and because they are functional spaces of transition between Lebesgue spaces themselves. This transitivity helps to find optimal regularity in some problems such as Lipschitz regularity, studied by Cianchi and Ma'zia, in \cite{CM}, for a class of  quasilinear elliptic equations  and  Daskalopoulos et al., in \cite{DKM}, for the fully nonlinear eliiptic equations. In the variational context there are several results focused on the theory of elliptic regularity. We highlight, Zhang et al.,in \cite{ZYY}, studied weak solutions of the following Dirichlet problem 
\begin{eqnarray}\label{reifenberg}
\left\{
\begin{array}{rclcl}
\div(ADu) &=& \div \vec{F}& \mbox{in} &   \Omega \\
u&=& 0 &\mbox{on}& \partial \Omega,
\end{array}
\right.
\end{eqnarray}
and guaranteed weighted variable Lorentz estimates for the gradient of solutions of the problem \eqref{reifenberg} solutions for Reifenberg domains and degenerate coefficients in weighted BMO spaces. In this area of problems, we still have other works such \cite{TZK,ZZ17,TN}.

In the non-variational context, we can mention \cite[Zhang and Zheng]{ZhangZhengmorrey}, guaranteed  estimates for solutions to the following Dirichlet problem
$$
\left\{
\begin{array}{rclcl}
F(D^2u,x) &=& f(x)& \mbox{in} &   \Omega \\
u&=& 0 &\mbox{on}& \partial \Omega,
\end{array}
\right.
$$   
for convex operator $F$ and source term $f$ belongs to weighted Lorentz and Lorentz-Morrey spaces. Recently in \cite[Zhang and Zheng]{ZhangZhenglorentz}, obtained, weighted Lorentz estimates for the viscosity solutions for the elliptic models of the form
$$
\left\{
\begin{array}{rclcl}
F(D^2u,Du,u,x) &=& f(x)& \mbox{in} &   \Omega \\
\beta \cdot Du&=& 0 &\mbox{on}& \partial \Omega,
\end{array}
\right.
$$   
under assumption of convexity/concavity of the governing operator. In this approach, an application of regularity was obtained in the context of variable exponent Morrey spaces.

Despite the extensive literature on fully nonlinear models with Dirichlet and Neumann boundary conditions, regularity properties in the functional spaces more general that $L^{p}$'spaces for models with general boundary conditions like \eqref{1.1} are much less investigated (cf. \cite{BessaOrlicz} \cite{ZhangZhenglorentz} and \cite{ZhangZhengmorrey} as examples in context of regularity theory). With this motivation, in this article, we provide a response to the \textit{Question} above with new regularity results for the problem (1.1) from the perspective of weighted Lorentz spaces.

We will summarize some recents developments of the weighted Lorentz regularity for the some problems (all with concave/convex structure):

\begin{table}[h]
\centering
\begin{tabular}{|c|c|c|}
\hline
{\it } & {\it Model equation}  & {\it Reference} \\
\hline
p-Dirichlet problem & 
$\left\{
	\begin{array}{rclcl}
		\div(ADu) &=& \div \vec{F}& \mbox{in} &   \Omega \\
		u&=& 0 &\mbox{on}& \partial \Omega,
	\end{array}
	\right.$ & \cite{ZYY} \\
\hline
Dirichlet problem & $
\left\{
\begin{array}{rclcl}
F(D^2u,x) &=& f(x)& \mbox{in} &   \Omega \\
u&=& 0 &\mbox{on}& \partial \Omega,
\end{array}
\right.
$    & \cite{ZhangZhengmorrey} \\
\hline
Oblique problem & $
\left\{
\begin{array}{rclcl}
	F(D^2u,Du,u,x) &=& f(x)& \mbox{in} &   \Omega \\
	\beta \cdot Du&=& 0 &\mbox{on}& \partial \Omega,
\end{array}
\right.
$    & \cite{ZhangZhenglorentz} \\
\hline
\end{tabular}\caption{Some recent developments of weighted Lorentz estimates}
\label{Table01}
\end{table}

\paragraph{1.2. Recession operator.}As reported before,  we will be interested in taking the limit porfile of elliptic operators. To do this, we define the following notation. 
\begin{definition} \label{DefR}
	For an operator $F$  we define the associated recession operator and use the notation $F^{\sharp}$ putting
	\begin{equation}\label{Reces}
		\displaystyle F^{\sharp}(\mathrm{X}, \varsigma, s, x) \defeq  \lim_{\tau \to 0^{+}} F_{\tau}(\mathrm{X}, \varsigma, s, x), \ (\mathrm{X}, \varsigma, s, x)\in \text{Sym(n)}\times \mathbb{R}^{n}\times \mathbb{R}\times \Omega,
	\end{equation}
where $F_{\tau}(\mathrm{X}, \varsigma, s, x)=\tau \cdot F\left(\tau^{-1}\mathrm{X}, \varsigma, s, x\right)$.
\end{definition}

A motivation  for \eqref{Reces} comes, for instance, from the analysis of  singular perturbation problems with singular operators on the left hand side. A simplified version of this class of problems is
\begin{equation}\label{E.L.}
	\left\{
	\begin{array}{rclcl}
		F(D^2u_{\epsilon}) &=& \zeta_{\epsilon}(u_{\epsilon})& \mbox{in} &   \Omega \\
		u_{\epsilon}&=& g(x) &\mbox{on}& \partial \Omega,
	\end{array}
	\right.
\end{equation}
where $F$ is a fully nonlinear elliptic operator and $\zeta_{\epsilon}$ is a suitable approximation to the Dirac-$\delta$ measure supported at the origin, have been recently considered in \cite{RT}.  An important accomplishment towards the free boundary problems are understanding of general free boundary conditions. Recently, Ricarte G.C. and Teixeira, E. V. in \cite{RT},   classified free boundary condition for the singular equation \eqref{E.L.} for rotational invariant fully nonlinear operators, $F$, i.e., for operators that depend only upon the eigenvalues of $D^2 u$ (these are the operators that commonly appear in differential geometry). The following free boundary condition hold in the viscosity sense: if $X_0 \in \partial_{red}\{u>0\}$ and ${\bf n}$ is the theoretical outward normal, then there exists an elliptic operator $F^{\sharp}$ (see Definition \ref{DefR}) for which the following free boundary condition holds:
$$
F^{\sharp}(X_0, Du(X_0) \otimes Du(X_0)) = 2 \int \zeta.
$$ 
Once this is verified, we can employ geometric measurement results to establish the smoothness $C^{1,\alpha}$ of the free boundary, closing the complete picture of the problem. 

It is important to emphasize that the Recession Operator means that the operator is almost an Pucci Operator near the infinity. The following example, illustrates the extension of such operators in the sense of good properties compared to the associated operator: 

\begin{example}
We consider be a perturbation of Bellman operators of the form
\begin{eqnarray*}\label{pertbellman}
\displaystyle F(\mathrm{X}, \varsigma, s, x) \defeq  \inf_{\iota \in \hat{\mathcal{A}}} \left( - \sum_{i,j=1}^{n} a^{\iota}_{ij}(x) \mathrm{X}_{ij} +  \sum_{i=1}^{n} b^{\iota}_i(x).\varsigma_i  + c^{\iota}(x)s\right) + \sum_{i=1}^{n} \textrm{arctg}(1+\lambda_i(\mathrm{X})).
\end{eqnarray*}
Here $(\lambda_i(\mathrm{X}))^{n}_{i=1}$ are eigenvalues of the matrix $\mathrm{X} \in \text{Sym(n)}$, $b_i^{\iota}, c^{\iota}: \Omega \to \R$ are real functions for each $\iota \in \mathfrak{I}$ (for $\mathfrak{I}$ be a set of index), and $\left( a^{\iota}_{ij}(x)\right)^{n}_{i,j=1} $ have eigenvalues in $[\lambda, \Lambda]$ for each $x \in \Omega$ and $\iota \in \mathfrak{I}$. In this case, $F$ is not an operator with a concave/convex structure, however, it is not difficult to check that
\begin{eqnarray*}
\displaystyle F^{\sharp}(\mathrm{X}, \varsigma, s, x)  & = &  \displaystyle \inf_{\iota \in \mathfrak{I}} \left( - \sum_{i,j=1}^{n} a^{\iota}_{ij}(x) \mathrm{X}_{ij}\right)
\end{eqnarray*}
which is a convex operator.
\end{example}

\begin{remark}
For more details and examples of operators and the recession operator, we recommend readers the works  \cite{Bessa}, \cite{daSR19},\cite{PT}, and \cite{RT}.
\end{remark}
In conclusion, the main objective of this work is to establish sharp Hessian weighted Lorentz estimates for problems that enjoy an asymptotically elliptic regular property like \eqref{Reces} in the governing operator in \eqref{1.1}. Finally, we will present some applications of this result in variable exponent Morrey spaces, density of weighted Lorentz-Sobolev spaces in fundamental classe (See Section \ref{section2} for the definitions) and a class of free boundary problems, the obstacle problem. In this case, we will obtain, within certain conditions, weighted Lorentz-Sobolev estimates for the obstacle problem with oblique boundary conditions.



It is worth mentioning that problems in the same line of research of the present work have been gaining prominence see \cite{BessaOrlicz}, \cite{Bessa},\cite{BJ0}, \cite{MF}, \cite{JP}, \cite{CS}, \cite{CS1} for an incomplete list of contributions.
\paragraph{1.3. Assumptions and main results.}We begin this part, by summarizing some classical notations that will be used throughout the paper:
\begin{itemize}
\item[\checkmark] For any $x=(x_{1},\ldots,x_{n-1},x_{n})\in\mathbb{R}^{n}$, we denote $x=(x',x_{n})$, where $x'=(x_{1},\ldots,x_{n-1})$;
\item [\checkmark] $\mathrm{B}_r(x)$ is the ball of radius $r>0$ centered at $x\in \mathbb{R}^{n}$. In particular, we write $\mathrm{B}_{r}=\mathrm{B}_{r}(0)$ and $\mathrm{B}^{+}_{r}=\mathrm{B}_{r}\cap\mathbb{R}^{n}_{+}$ 
\item[\checkmark] $\mathrm{B}_{r}^{+}(x)=\mathrm{B}^{+}_{r}+x_{0}$ is the half-ball of radius $r$ centered at $x$;
\item[\checkmark] we write $\mathrm{T}_r \defeq \{(x^{\prime},0) \in \mathbb{R}^{n-1} : |x^{\prime}| < r\}$ and $\mathrm{T}_r(x_0) \defeq \mathrm{T}_r + x^{\prime}_0$ where $x^{\prime}_0 \in \mathbb{R}^{n-1}$.  
\end{itemize}

We recall the definition of \textit{weights}. A function $\omega\in L^{1}_{loc}(\mathbb{R}^{n})$ is a \textit{weight} if it is non-negative and takes values in $(0,\infty)$ almost everywhere. This case, we identify $\omega$ with the measure 
\begin{eqnarray*}
\omega(E)=\int_{E}\omega(x)dx,
\end{eqnarray*} 
for Lebesgue measurable set $E\subset \mathbb{R}^{n}$. We say that be a weight $\omega$ belong to \textit{Muckenhoupt class} $\mathfrak{A}_{q}$ for $q\in(1,\infty)$, denoted by $\omega\in \mathfrak{A}_{q}$, if 
\begin{eqnarray*}
[\omega]_{q}\defeq \sup_{B\subset \mathbb{R}^{n}}\left(\intav{B}\omega(x)dx\right)\left(\intav{B}\omega(x)^{\frac{-1}{q-1}}dx\right)^{q-1}<\infty,
\end{eqnarray*}
where the supremum is taken over all balls $B\subset \mathbb{R}^{n}$.

Weights and $\mathfrak{A}_{q}$ classes are topics of importance in modern harmonic analysis and its applications. With these definition, we can define the main class of functions that we will work with throughout this article. 

\begin{definition}\label{lorentzspaces}
The \textit{weighted Lorentz space} $L^{p,q}_{\omega}(E)$ for $(p,q)\in (0,\infty)\times (0,\infty]$, Lebesgue measurable set $E\subset \mathbb{R}^{n}$ and weight $\omega$ is the set of all measurable functions $h$ in $E$ such that
\begin{eqnarray*}
\|h\|_{L^{p,q}_{\omega}(E)}\defeq \left(q\int_{0}^{\infty}t^{q-1}\omega(\{x\in E;|h(x)|>t\})^{\frac{q}{p}}dt\right)^{\frac{1}{q}}<\infty,
\end{eqnarray*} 
when $q\in(0,\infty)$, and 
\begin{eqnarray*}
\|h\|_{L^{p,\infty}_{\omega}(E)}\defeq\sup_{t>0}t\omega(\{x\in E;|h(x)|>t\})^{\frac{1}{p}}<\infty.
\end{eqnarray*}
Furthermore, the \textit{weighted Lorentz-Sobolev} space $W^{k}L^{p,q}_{\omega}(E)$ (for $k\in\mathbb{N}$) is the set of all measurable functions $h$ in $E$ such that all distributional derivates $D^{\alpha}h$, for multiindex $\alpha$ with length $|\alpha|=0,1,\ldots,k$ also belong to $L^{p,q}_{\omega}(E)$ with norm is given by
\begin{eqnarray*}
	\|h\|_{W^{k}L^{p,q}_{\omega}(E)}\defeq \sum_{|\alpha|\leq k}\|D^{\alpha}h\|_{L^{p,q}_{\omega}(E)}.
\end{eqnarray*} 
\end{definition}
\begin{remark}
Under the Definition \ref{lorentzspaces}:
\begin{itemize}
\item [\checkmark] In case that $\omega\equiv1$, $L^{p,q}_{\omega}(E)$ is the Lorentz spaces;
\item[\checkmark] If $q=\infty$ and $\omega\equiv 1$ then $L^{p,\infty}_{\omega}(E)$ is called by weak $L^{p}$ space and denoted by $L^{p}_{w}(E)$.
\item[\checkmark] If $p=q$ and $\omega\equiv1$ the weighted Lorentz space $L^{p,q}_{\omega}(E)$ coincides with classical Lebesgue space $L^{p}(E)$ by Layer-Cake Representation. 
\end{itemize} 
\end{remark}

Throughout this manuscript, we will assume the following structural conditions on the problem $\eqref{1.1}$:
\begin{enumerate}
\item[(E1)] ({\bf Operator Structure}) We assume that $F \in C^0(\text{Sym(n)}, \R^n, \R, \Omega)$. Moreover, there are constants $0 < \lambda \le \Lambda$, $\sigma\geq 0$ and $\xi\geq 0$ such that
\begin{eqnarray}\label{EqUnEll}
\mathcal{M}^{-}_{\lambda,\Lambda}(\mathrm{X}-\mathrm{Y}) - \sigma |\zeta_1-\zeta_2| -\xi|r_1-r_2| &\le& F(\mathrm{X}, \zeta_1,r_1,x)-F(\mathrm{Y},\zeta_2,r_2,x) \nonumber \\
&\le& \mathcal{M}^{+}_{\lambda, \Lambda}(\mathrm{X}-\mathrm{Y})+ \sigma |\zeta_1-\zeta_2| + \xi|r_1-r_2| \label{5}
\end{eqnarray}
for all $\mathrm{X},\mathrm{Y} \in \text{Sym(n)}$, $\zeta_1,\zeta_2 \in \mathbb{R}^n$, $r_1,r_2 \in \mathbb{R}$, $x \in \Omega$, where
\begin{equation}
\mathcal{M}^{+}_{\lambda,\Lambda}(\mathrm{X}) \defeq  \Lambda \sum_{e_i >0} e_i +\lambda \sum_{e_i <0} e_i \quad \text{and} \quad \mathcal{M}^{-}_{\lambda,\Lambda}(\mathrm{X}) \defeq \Lambda \sum_{e_i <0} e_i + \lambda \sum_{e_i >  0} e_i,
 \end{equation}
 are the \textit{Pucci's extremal operators} and $e_i = e_i(\mathrm{X})$ ($1\leq i\leq n$) denote the eigenvalues of $\mathrm{X}$. By normalization question we assume that $F(0,0,0,x)=0$ for all $x\in \Omega$.

\item[(E2)] ({\bf Regularity of the data}) The data satisfy $f \in L^{p,q}_{\omega}(\Omega)$ for $ (p,q)\in (n,\infty)\times(0,+\infty]$, $g, \gamma \in C^{1,\alpha}(\partial \Omega)$ with $\gamma \le 0$ and $\beta \in C^{1,\alpha}( \partial \Omega; \mathbb{R}^n)$ (for some $\alpha\in (0,1)$).

\item[(E3)] ({\bf Oscilation  of the coefficents}) For a fixed $x_0\in \Omega$, recall of the quantity:
$$
\Psi_{F}(x; x_0) \defeq \sup_{\mathrm{X} \in \text{Sym(n)}} \frac{|F(\mathrm{X},0,0,x) - F(\mathrm{X},0,0,x_0)|}{1+\|\mathrm{X}\|},
$$
measures the oscillation of the coefficients of $F$ around $x_0$. When, $x_{0}=0$ we abreviate following notation  $\Psi_{F}(x, 0) = \Psi_F(x)$.  We assume that the oscilation function  $\Psi_{F^{\sharp}}$ is assumed to be H\"{o}lder continuous function in the $L^{p}$-average sense for every $\mathrm{X} \in \text{Sym(n)}$. This means that, there exist universal constants (here for universal constants, we means that if it depends only on $n, \lambda, \Lambda, p, \mu_0, \|\gamma\|_{C^{1,\alpha}(\partial \Omega)}$ and $\|\beta\|_{C^{1,\alpha}(\partial \Omega)}$) $\hat{\alpha} \in (0,1)$, $\theta_0 >0$ and $0 < r_0 \le 1$ such that
$$
\left( \intav{\mathrm{B}_r(x_0) \cap \Omega} \Psi_{F^{\sharp}}(x,x_0)^{p} dx \right)^{1/p} \le \theta_0 r^{\hat{\alpha}}
$$
for $x_0 \in \overline{\Omega}$ and $0 < r \le r_0$.

\item[(E4)] ({\bf Interior estimates}) We assume that the recession operator $F^{\sharp}$ there exists and has a priori $C^{1,1}_{loc}$ estimates. More precisely, if $\mathfrak{h}$ is viscosity solution of $F^{\sharp}(D^{2}\mathfrak{h}) = 0$ in $\mathrm{B}_{1}$, then $\mathfrak{h}\in C^{1,1}(\mathrm{B}_{\frac{1}{2}})$ and
\begin{eqnarray*}
\|\mathfrak{h}\|_{C^{1,1}(\mathrm{B}_{\frac{1}{2}})}\leq \mathfrak{c_{1}}\|\mathfrak{h}\|_{L^{\infty}(\mathrm{B}_{1})}
\end{eqnarray*}
for a constant $\mathfrak{c_{1}}>0$.

\item[(E5)] ({\bf Up-to-the boundary a priori estimates}) We shall assume that the recession operator $F^{\sharp}$ there exists and fulfils a up-to-the boundary $C^{1,1}$ \textit{a priori} estimates, i.e., for each $x_0 \in \mathrm{B}^{+}_{1}$ and  $g_0 \in C^{1,\alpha}(\mathrm{T}_1)$ (for some $\alpha \in (0, 1)$), there exists a solution $\mathfrak{h} \in C^{1,1}(\mathrm{B}^+_1) \cap C^0(\overline{\mathrm{B}^+_1})$ of
$$
\left\{
\begin{array}{rclcl}
 F^{\sharp}(D^2 \mathfrak{h},x_0) &=& 0& \mbox{in} &   \mathrm{B}^+_1 \\
 \mathfrak{B}(D\mathfrak{h},\mathfrak{h},x)&=& g_{0}(x) &\mbox{on}& \mathrm{T}_1
\end{array}
\right.
$$
such that
$$
\|\mathfrak{h}\|_{C^{1,1}\left(\overline{\mathrm{B}^{+}_{\frac{1}{2}}}\right)} \le \mathfrak{c_{2}} \left(\|\mathfrak{h}\|_{L^{\infty}(\mathrm{B}^{+}_1)}+\|g_0\|_{C^{1,\alpha}(\overline{\mathrm{T}_1})}\right)
$$
for some constant $\mathfrak{c_{2}}>0$.
\end{enumerate}

\begin{remark}
From now on, an operator fulfilling $\text{(E1)}$ will be called $(\lambda, \Lambda, \sigma, \xi)-$elliptic operator. Moreover, the conditions (E4) and (E5) are valid everywhere when $F$ is convex/concave operator. 
\end{remark} 

We now state our main result, which endures global $W^{2}L^{p,q}_{\omega}$ estimates for viscosity solutions of the problem \eqref{1.1}.
\vspace{0.4cm}

\begin{theorem}[{\bf $W^{2}L^{p,q}_{\omega}$ - global estimates}]\label{T1}
Let  $\Omega\subset \mathbb{R}^{n}$ ($n\geq 2$) be a bounded domain with $\partial \Omega \in C^{2,\alpha}$ (for some $\alpha\in(0,1)$) and $\omega\in \mathfrak{A}_{\frac{p}{n}}$ weight. Assume that structural assumptions (E1)-(E5) are in force and $u$ be an $L^{\tilde{p}}-$viscosity solution of \eqref{1.1} with $\tilde{p}\in[n,p)$. Then, $u \in W^{2}L^{p,q}_{\omega}(\Omega)$ with estimate \begin{equation} \label{2}
\|u\|_{W^{2}L^{p,q}_{\omega}(\Omega)} \le \mathrm{C}\cdot\left(\|f\|_{L^{p,q}_{\omega}(\Omega)}+\| g\|_{C^{1,\alpha}(\partial \Omega)}   \right),
\end{equation}
where $\mathrm{C}$ is a positive constant that depends only on $n$,  $\lambda$,  $\Lambda$, $\xi$, $\sigma$, $p$, $q$, $\tilde{p}$, $[\omega]_{\frac{p}{n}}$, $\mu_{0}$, $\mathfrak{c_{1}}$,  $\mathfrak{c_{2}}$, $\|\beta\|_{C^{1,\alpha}(\partial\Omega)}$, $\|\gamma\|_{C^{1,\alpha}(\partial\Omega)}$ and $\|\partial \Omega\|_{C^{1,1}}$.
\end{theorem}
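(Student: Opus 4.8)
The plan is to follow the now-classical route for $W^{2,p}$-type estimates for fully nonlinear equations — first establish the estimate in the non-weighted Lebesgue scale as a consequence of the recession/compactness machinery, then promote it to weighted Lorentz spaces via a good-$\lambda$ (or level-set) decomposition together with the Muckenhoupt weight theory. The first reduction is to the half-space model: after flattening $\partial\Omega$ using the $C^{2,\alpha}$ regularity of the boundary and a standard change of variables (which perturbs $F$ in a controlled way, preserving (E1) and the smallness in (E3) after shrinking $r_0$), and after subtracting off a suitable corrector to absorb the boundary data $g$, it suffices to prove an interior/boundary $W^{2,p}$ estimate for the problem with $g\equiv 0$. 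Here the key is a decay-of-oscillation / Caffarelli-type compactness argument: one compares $u$ on small half-balls with the solution $\mathfrak{h}$ of the recession equation $F^{\sharp}(D^2\mathfrak{h},x_0)=0$ with the oblique condition $\mathfrak{B}(D\mathfrak{h},\mathfrak{h},x)=0$ on $\mathrm{T}_r$, using (E4)–(E5) to get the $C^{1,1}$ control on $\mathfrak{h}$, the smallness of $\Psi_{F^{\sharp}}$ from (E3) and the smallness of $\|f\|$ (after scaling and normalization) to make the comparison error small, and the uniform ellipticity (E1) via the Pucci operators to run the ABP/maximum-principle estimates. Iterating this on a dyadic sequence of half-balls yields, by a Calderón–Zygmund cube decomposition (as in Caffarelli–Peral / Escauriaza), the distributional estimate
\begin{equation*}
\left|\{x\in\Omega : \mathcal{M}(\|D^2 u\|)(x) > N^k\}\right| \le (C\varepsilon)^k \left|\{x\in\Omega : \mathcal{M}(\|D^2 u\|)(x) > 1\} \cup \{x : \mathcal{M}(f^p)(x) > \delta\}\right|
\end{equation*}
for the relevant maximal operators, which is the power-decay of level sets driving everything that follows.

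Next I would convert this level-set decay into the weighted Lorentz bound. Since $\omega\in\mathfrak{A}_{p/n}$ and we are working with the maximal function of $\|D^2u\|$ raised to the appropriate power, the Muckenhoupt $\mathfrak{A}_{q}$ property gives the crucial measure comparison: there exist constants so that $|E|\le \epsilon|B|$ implies $\omega(E)\le C\epsilon^{\kappa}\omega(B)$ for some $\kappa>0$ depending on $[\omega]_{p/n}$. Combining this with the geometric level-set inequality above produces
\begin{equation*}
\omega\left(\{\|D^2u\| > N^k\}\right) \le (C\varepsilon)^{\kappa k}\,\omega\left(\{\|D^2u\|>1\}\cup\{\mathcal{M}(f^p)>\delta\}\right),
\end{equation*}
and then, using the definition of $\|\cdot\|_{L^{p,q}_\omega}$ as a sum/integral over the dyadic levels $t=N^k$, one sums the geometric series (this is where choosing $\varepsilon$ small enough, i.e. $r_0,\theta_0$ small in (E3), matters) to obtain
\begin{equation*}
\|\,\|D^2u\|\,\|_{L^{p,q}_\omega(\Omega)} \le C\left(\|\,\|D^2u\|\,\|_{L^{p,q}_\omega(\Omega)}^{\text{(on a lower level)}} + \|f\|_{L^{p,q}_\omega(\Omega)}\right),
\end{equation*}
which, after the standard absorption/interpolation argument and adding back the lower-order norms of $u$ and $Du$ (controlled by the ABP estimate and Sobolev embedding, using $p>n$), yields \eqref{2}. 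The boundedness of the Hardy–Littlewood maximal operator on $L^{p/n,q/n}_\omega$ for $\mathfrak{A}_{p/n}$ weights — a standard fact in weighted Lorentz theory — is what makes the passage $\mathcal{M}(f^p)\mapsto f$ legitimate at the Lorentz level.

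The main obstacle I anticipate is twofold. First, the oblique boundary condition $\beta\cdot Du+\gamma u=g$ with $\beta$ merely tangential-admissible (only $\beta\cdot\mathbf{n}\ge\mu_0$) is considerably more delicate than Dirichlet: the comparison function $\mathfrak{h}$ must satisfy the same oblique condition, so the barrier construction and the boundary version of the Caffarelli approximation lemma require care — this is precisely why (E5) is postulated as a hypothesis rather than proven, and one must verify that the flattening diffeomorphism transforms the oblique operator $\mathfrak{B}$ into another admissible oblique operator with the obliqueness constant $\mu_0$ only mildly degraded. Second, tracking uniformity of all constants through the scaling, flattening, and iteration so that the final constant depends only on the listed quantities — in particular making sure the dependence on $\tilde p\in[n,p)$ (the a priori regularity class of the viscosity solution) enters only through the ABP/$W^{2,\tilde p}$ starting estimate and not through the Lorentz exponents — demands bookkeeping but no new idea. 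A minor technical point is handling the case $q=\infty$ separately, where the geometric-series summation is replaced by a supremum over dyadic levels, but the structure of the argument is unchanged.
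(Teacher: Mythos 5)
Your outline is essentially the paper's own proof: a recession-operator comparison built on (E1)--(E5) yields the power decay of the bad level sets (Lemma \ref{propdecaimento}), the $\mathfrak{A}_{\frac{p}{n}}$ strong-doubling property transfers that decay to the measure $\omega$ (Corollary \ref{corolariodecaimento}), dyadic summation against the $L^{p,q}_{\omega}$ quasinorm together with the weighted maximal-function bound (Proposition \ref{P1}, Lemma \ref{Hardylpq}) gives the Hessian estimate, and flattening of $\partial\Omega$, a covering argument and ABP complete the global bound, exactly the pipeline of Proposition \ref{T-flat}, Corollary \ref{Cor} and the final proof. The only differences are implementational rather than structural: the paper measures the singular set via the paraboloid-touching sets $\mathcal{A}_{\mathrm{M}}(u,\cdot)$ and the function $\Theta$ (Lemma \ref{caracterizationofhessian}), so no a priori pointwise Hessian is needed, it carries the inhomogeneous datum $g$ through the approximation lemma and (E5) instead of subtracting a corrector, and the passage from $C^{0}$- to $L^{\tilde{p}}$-viscosity solutions is handled by an explicit approximation/stability/ABP step (Corollary \ref{Cor}) rather than being absorbed into constant bookkeeping.
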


Our approach is developed from the following perspective: Firstly, we will study the set where the Hessian is "bad" and we will prove that the measure (with respect to $\omega$) has a power decay of $t$. To do this, we will work in a neighborhood of the recession operator and use Tangential Analysis arguments to obtain, via the estimates assumed in ($E4)$ and $(E5)$, such decay. After this step, we guarantee that the Hessian exists and belongs to the Lorentz space with adequate weight and together with estimates for the gradient and the function itself, via the covering argument, we obtain the desired result. We emphasize that our proof is based on the method employed in \cite{Bessa}. However, our results extend regarding oblique boundary scenario, former results from  \cite{ZhangZhenglorentz},\cite{ZhangZhengmorrey}, \cite{BOW16}, \cite{PT}  \cite{ST} (see also \cite{daSR19}), and to some extent, those from \cite{BOW16}, \cite{Kry13}, \cite{Kry17} and \cite{ZZZ21} by making using of techniques adapted to the general framework of the fully nonlinear models under relaxed convexity assumptions and oblique boundary data.

The rest of this manuscript is organized as follows.: Section \ref{section2} contains the main notations and preliminary results where stands out a review of the theory of weighted Lorentz spaces and approximation method of the recession operator. In Section \ref{section3} we obtain local interior and up-to-boundary estimates  weighted Lorentz-Sobolev estimates and with these estimates we present the proof of the Main Theorem \ref{T1}. In the last section \ref{applications}, we present some applications of the results obtained in section \ref{section3}. More precisely, we obtain Morrey regularity with variable exponents for solutions of \eqref{1.1} in the scenario where $\gamma=g=0$. After, we turn our attention to the density of $W^{2}L^{p,q}_{\omega}$ in the fundamental class $S$ of solutions to the flat problem. Finally, we ensure, in context of the obstacle problem,  Lorentz-Sobolev estimates of solutions to this problem.

\section{Preliminaries and auxiliary results} \label{section2}

\hspace{0.4cm}We introduce the appropriate notions of viscosity solutions to \eqref{1.1}. For this, consider the problem 
\begin{equation}\label{2.1}
	\left\{
	\begin{array}{rclcl}
		F(D^2u,Du,u,x) &=& f(x)& \mbox{in} &   \Omega \\
		\mathfrak{B}(Du,u,x)&=& g &\mbox{on}& \Gamma,
	\end{array}
	\right.
\end{equation}
where $\Gamma \subset \Omega$ is a relatively open set.
\begin{definition}[{\bf $C^{0}-$viscosity solutions}]\label{VSC_0} Let $F$ be a $\left(\lambda, \Lambda, \sigma, \xi\right)-$elliptic operator and let $f\in C^{0}(\Omega)$. A function $u \in C^{0}(\Omega \cup \Gamma)$  is said a $C^{0}-$viscosity solution of \eqref{2.1} if the following condition hold:
\begin{enumerate}
\item[a)] for all $ \forall\,\, \varphi \in C^{2} (\Omega \cup \Gamma)$ touching $u$ by above  at  $x_0 \in \Omega \cup \Gamma$,
$$
\left\{
\begin{array}{rcl}
F\left(D^2 \varphi(x_{0}), D \varphi(x_{0}), \varphi(x_{0}), x_{0}\right)  \geq f(x_0) & \text{when} & x_0 \in \Omega \\
\mathfrak{B}(D\varphi(x_{0}),\varphi(x_{0}),x_{0}) \ge g(x_0) & \text{when} & x_0 \in \Gamma
\end{array}
\right.
$$
\item[b)] for all $ \forall\,\, \varphi \in C^{2 } (\Omega \cup \Gamma)$ touching $u$ by below  at  $x_0 \in \Omega \cup \Gamma$,
$$
\left\{
\begin{array}{rcl}
F\left(D^2 \varphi(x_{0}), D \varphi(x_{0}), \varphi(x_{0}), x_{0}\right)  \leq f(x_0) & \text{when} & x_0 \in \Omega \\
\mathfrak{B}(D\varphi(x_{0}),\varphi(x_{0}),x_{0}) \le g(x_0) & \text{when} & x_0 \in \Gamma
\end{array}
\right.
$$

\end{enumerate}
\end{definition}
\begin{remark}
If we use $W^{2,\tilde{p}}_{loc}$ for some $\tilde{p}\in \left(\frac{n}{2},\infty\right)$ functions instead of functions in $C^{2}_{loc}$ as test functions in Definition
\ref{VSC_0} and $f\in L^{\tilde{p}}(\Omega)$, we call such a defined solution a \textbf{$L^{\tilde{p}}$-viscosity solution}. Moreover, we point that such definitions are equivalent when $F$ and $f$ are continuous functions (cf. \cite{CCKS} for more details). 
\end{remark}

Now, we recall in the fundamental class of solutions of the uniformly elliptic equations (cf. \cite{CC} and \cite{CCKS}).

\begin{definition}
 We define the class $\overline{S}\left(\lambda,\Lambda,\sigma, f\right)$ and $\underline{S}\left(\lambda,\Lambda,\sigma, f\right)$ to be the set of all continuous functions $u$ that satisfy $L^{+}(u) \ge f$, respectively $L^{-}(u) \le f$ in the viscosity sense (see Definition \ref{VSC_0}), where
 $$
 L^{\pm}(u) \defeq \mathcal{M}^{\pm}_{\lambda,\Lambda}(D^2 u) \pm \sigma |Du|.
 $$
 Furthermore, we define
 $$
 S\left(\lambda, \Lambda, \sigma,f\right) \defeq  \overline{S}\left(\lambda, \Lambda, \sigma,f\right) \cap \underline{S}\left(\lambda, \Lambda,\sigma, f\right)\,\,\text{and}\,\,
 S^{\star}\left(\lambda, \Lambda,\sigma, f\right) \defeq  \overline{S}\left(\lambda, \Lambda, \sigma,|f|\right) \cap \underline{S}\left(\lambda, \Lambda,\sigma, -|f|\right).
  $$
  Moreover, when $\sigma=0$, we denote $S^{\star}(\lambda,\Lambda,0,f)$ just by $S^{\star}(\lambda,\Lambda,f)$ (resp. by $S, \underline{S}, \overline{S}$).
 \end{definition}

For our approach, we need the concept of paraboloids and the tangency of these objects  and continuous functions.
\begin{definition}
We recall that a \textit{paraboloid with openning} $M>0$ is the function $P_{M}(x)=a+b\cdot x\pm\frac{M}{2}|x|^{2}$. When the sign of $P_{M}$ is "+" see that $P_{M}$ is a convex function and  a concave function otherwise.
\end{definition}
 Let $u$ be a continuous function in $\Omega$. We define  for any  open set $\widetilde{\Omega}\subset \Omega$ and $M>0$ 
$$
\underline{G}_{\mathrm{M}}(u,\widetilde{\Omega}) \defeq \left\{x_0 \in \widetilde{\Omega} ; \exists \, \mathrm{P}_{\mathrm{M}} \,\,\, \textrm{concave parabolid s. t.} \,\,\, \mathrm{P}_{\mathrm{M}}(x_0)=u(x_0), \,\, \mathrm{P}_{\mathrm{M}}(x) \le u(x)\,\, \forall \, x \in \widetilde{\Omega}\right\}
$$
and
$$
\underline{\mathcal{A}}_{\mathrm{M}}(u,\widetilde{\Omega}) \defeq \widetilde{\Omega} \setminus \underline{G}_{\mathrm{M}}(u,\widetilde{\Omega}).
$$

Analogously we define , using concave paraboloids, $\overline{G}_{\mathrm{M}}(u,\widetilde{\Omega})$ and  $\overline{\mathcal{A}}_{\mathrm{M}}(u,\widetilde{\Omega})$. Also define the sets
$$
G_{\mathrm{M}}(u,\widetilde{\Omega}) \defeq  \underline{G}_{\mathrm{M}}(u,\widetilde{\Omega}) \cap \overline{G}_{\mathrm{M}}(u,\widetilde{\Omega})\,\,\,\text{and}\,\,\,\mathcal{A}_{\mathrm{M}}(u,\widetilde{\Omega}) \defeq \underline{\mathcal{A}}_{\mathrm{M}}(u,\widetilde{\Omega}) \cap \overline{\mathcal{A}}_{\mathrm{M}}(u,\widetilde{\Omega}).
$$
Furthermore, we define:
\begin{eqnarray*}
\overline{\Theta}(u,\widetilde{\Omega})(x) &\defeq& \inf\left\{\mathrm{M}>0 ; \, x \in \overline{G}_{\mathrm{M}}(u,\widetilde{\Omega})\right\},\\
\underline{\Theta}(u,\widetilde{\Omega})(x)&\defeq& \inf\left\{\mathrm{M}>0 ; \, x \in \underline{G}_{\mathrm{M}}(u,\widetilde{\Omega})\right\},\\
\Theta(u,\widetilde{\Omega})(x) &\defeq& \sup\left\{\underline{\Theta}(u,\widetilde{\Omega})(x), \overline{\Theta}(u,\widetilde{\Omega})(x)\right\}.
\end{eqnarray*}

\begin{remark}
We observe that $\Theta(u,\tilde{\Omega})$ is a mensurable function (see \cite{CC} for more details and facts on the function $\Theta$). 
\end{remark}
For compactness and stability arguments we will need the following theorem whose proof can be found in \cite[Theorem 3.8]{CCKS}:
\begin{lemma}[{\bf Stability Lemma}]\label{Est}
For $k \in \mathbb{N}$ let $\Omega_k \subset \Omega_{k+1}$ be an increasing sequence of domains and $\displaystyle \Omega \defeq \bigcup_{k=1}^{\infty} \Omega_k$. Let $p \geq n$ and $F, F_k$ be $(\lambda, \Lambda, \sigma, \xi)-$elliptic operators. Assume $f \in L^{p}(\Omega)$, $f_k \in L^p(\Omega_k)$ and that $u_k \in C^0(\Omega_k)$ are $L^{p}-$viscosity sub-solutions (resp. super-solutions) of
$$
	F_k(D^2 u_k,Du_k,u_k,x)=f_k(x) \quad \textrm{in} \quad \Omega_k.
$$
Suppose that $u_k \to u_{\infty}$ locally uniformly in $\Omega$ and that for $\mathrm{B}_r(x_0) \subset \Omega$ and $\varphi \in W^{2,p}(\mathrm{B}_r(x_0))$ we have
\begin{equation} \label{Est1}
	\|(\hat{g}-\hat{g}_k)^+\|_{L^p(\mathrm{B}_r(x_0))} \to 0 \quad \left(\textrm{resp.} \,\,\, \|(\hat{g}-\hat{g}_k)^-\|_{L^p(\mathrm{B}_r(x_0))} \to 0 \right),
\end{equation}
where $\hat{g}(x) \defeq F(D^2 \varphi, D \varphi, u,x)-f(x)$ and $\hat{g}_k(x) =  F_k(D^2 \varphi, D \varphi, u_{k},x)-f_k(x)$.  Then $u$ is an $L^{p}-$viscosity sub-solution (resp. super-solution) of
$$
	F(D^2 u,Du,u,x)=f(x) \quad \textrm{in} \quad \Omega.
$$
Moreover, if $F, f$ are continuous, then $u$ is a $C^0-$viscosity sub-solution (resp. super-solution) provided that \eqref{Est1} holds for all $\varphi \in C^2(\mathrm{B}_r(x_0))$ test function.
\end{lemma}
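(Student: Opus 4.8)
The plan is to verify the defining property of an $L^p$-viscosity sub-solution for $u_\infty$ directly: namely, that for every $\mathrm{B}_r(x_0)\Subset\Omega$ and every $\varphi\in W^{2,p}(\mathrm{B}_r(x_0))$ such that $u_\infty-\varphi$ has a local maximum at some interior point, one cannot have $F(D^2\varphi,D\varphi,u_\infty,x)-f(x)\le -\eta<0$ a.e.\ in a neighborhood of that point. We argue by contradiction. Suppose $u_\infty-\varphi$ attains a strict local maximum over $\overline{\mathrm{B}_\rho(z)}$ at an interior point $z$, with $\mathrm{B}_\rho(z)\subset\mathrm{B}_r(x_0)$, and that $\hat g(x)=F(D^2\varphi,D\varphi,u_\infty,x)-f(x)\le-\eta$ a.e.\ on $\mathrm{B}_\rho(z)$. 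First I would reduce to a strict maximum by replacing $\varphi$ with $\varphi+\delta|x-z|^2$ for small $\delta$, which only shifts $F(D^2\varphi,\cdot)$ by a controlled amount (using the ellipticity bound (E1), since $2\delta I\ge 0$); choosing $\delta$ small keeps $\hat g\le-\eta/2$. Then, since $u_k\to u_\infty$ locally uniformly, for $k$ large the function $u_k-\varphi$ attains its maximum over $\overline{\mathrm{B}_\rho(z)}$ at some interior point $z_k$, and $z_k\to z$.

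The next step is to exploit that $u_k$ is an $L^p$-viscosity sub-solution of $F_k(D^2u_k,Du_k,u_k,x)=f_k(x)$. Applying the definition with the same test function $\varphi$ and touching point $z_k$, one gets that $\hat g_k(x)=F_k(D^2\varphi,D\varphi,u_k,x)-f_k(x)$ cannot be essentially bounded above by a negative constant near $z_k$; more precisely, for $L^p$-viscosity solutions this is phrased as: it is not true that $\mathrm{ess\,sup}$ over small balls of $\hat g_k$ is $\le -c<0$. To make this quantitative I would use the Aleksandrov–Bakelman–Pucci estimate together with the hypothesis \eqref{Est1}. Concretely, write
\[
\hat g_k = \hat g + (\hat g_k-\hat g),
\]
so $\hat g_k \le -\eta/2 + (\hat g_k-\hat g)^+$ on $\mathrm{B}_\rho(z)$; by \eqref{Est1}, $\|(\hat g_k-\hat g)^+\|_{L^p(\mathrm{B}_r(x_0))}\to 0$. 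Now the function $w_k\defeq u_k-\varphi$ satisfies, in the $L^p$-viscosity sense, an inequality of the form $\mathcal M^+_{\lambda,\Lambda}(D^2 w_k)+\sigma|Dw_k|\ge -\hat g_k$ on the region where $w_k$ is close to its max (this is the standard translation: since $F_k$ is $(\lambda,\Lambda,\sigma,\xi)$-elliptic and $F_k(0,0,0,x)=0$ is not assumed here but the $r$-dependence is controlled via $\xi$ and the uniform bound on $u_k$), hence $w_k\in\underline S(\lambda,\Lambda,\sigma,-\hat g_k)$ up to an $L^p$ error. Applying the ABP maximum principle to $w_k$ on $\mathrm{B}_\rho(z)$, using that $w_k$ has an interior maximum bounded below by $w_k(z_k)\ge\sup_{\partial\mathrm{B}_\rho(z)}w_k$, forces
\[
0 \;\le\; \max_{\mathrm{B}_\rho(z)} w_k - \max_{\partial\mathrm{B}_\rho(z)} w_k \;\le\; C\rho\,\|(-\hat g_k)^+\|_{L^n(\mathrm{B}_\rho(z))} \;\le\; C\rho\Big(-\tfrac{\eta}{2}\,|\mathrm{B}_\rho|^{1/n}\cdot 0 + \|(\hat g_k-\hat g)^+\|_{L^p}\Big)+o(1),
\]
where I have used $(-\hat g_k)^+\le(\hat g_k-\hat g)^+$ on the set where $-\hat g\le-\eta/2$. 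Passing $k\to\infty$ the right-hand side tends to $0$, but the left-hand side is $\ge c(\rho)>0$ because $z$ is a \emph{strict} interior maximum of $u_\infty-\varphi$ and $u_k\to u_\infty$ uniformly forces $w_k(z_k)-\max_{\partial\mathrm{B}_\rho(z)}w_k\to w_\infty(z)-\max_{\partial\mathrm{B}_\rho(z)}w_\infty>0$. This contradiction shows $\hat g(x)\le-\eta$ a.e.\ is impossible, i.e.\ $u_\infty$ is an $L^p$-viscosity sub-solution. The super-solution case is symmetric, replacing $\mathcal M^+$ by $\mathcal M^-$, maxima by minima, and $(\,\cdot\,)^+$ by $(\,\cdot\,)^-$ in \eqref{Est1}. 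The final assertion (for continuous $F,f$, passage to $C^0$-viscosity solutions with $C^2$ test functions) follows by the same argument restricted to $\varphi\in C^2$, or alternatively by invoking the equivalence of $C^0$- and $L^p$-viscosity notions for continuous data recorded after Definition \ref{VSC_0}.

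The main obstacle I anticipate is the careful bookkeeping in the ABP step: one must localize correctly so that the ``bad'' negative part of $\hat g_k$ genuinely controls the defect between the interior and boundary maxima of $w_k$, and one must ensure the lower-order terms ($\sigma|Dw_k|$ and the $\xi$-contribution from the $u_k$-dependence) are absorbed — this uses the uniform $L^\infty$ bound on the $u_k$ (automatic from local uniform convergence) and the scale-invariant form of ABP for the class $\underline S(\lambda,\Lambda,\sigma,\cdot)$. The rest is soft: uniform convergence gives convergence of the maximum points, and hypothesis \eqref{Est1} is exactly what kills the error term in the limit. Since this statement is quoted verbatim from \cite[Theorem 3.8]{CCKS}, in the paper itself it suffices to cite that reference; the sketch above indicates the mechanism.
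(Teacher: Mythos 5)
The paper itself does not prove this lemma; it simply cites \cite[Theorem~3.8]{CCKS} and moves on, as you correctly note at the end of your sketch. Your reconstruction follows the same mechanism as CCKS: contradiction, reduction of $w_k = u_k - \varphi$ to the Pucci class via the $(\lambda,\Lambda,\sigma,\xi)$-ellipticity of $F_k$, an ABP estimate on $w_k$, and then passage to the limit with \eqref{Est1} killing the error and the strict interior maximum of $w_\infty = u_\infty-\varphi$ surviving uniform convergence. That is the right skeleton.

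There are, however, a few sign and notation slips that you should correct before this would stand on its own. First, from $F_k(D^2 u_k, Du_k, u_k, x)\ge f_k$ and the upper bound in (E1) one gets $\mathcal M^+_{\lambda,\Lambda}(D^2 w_k)+\sigma|Dw_k|\ge -\hat g_k$, which in the paper's notation places $w_k$ in $\overline S(\lambda,\Lambda,\sigma,-\hat g_k)$, not $\underline S$: the latter is the super-solution class (defined by $L^-\le f$), so your class label is wrong even though the inequality you wrote is correct. Second, the ABP estimate for the sub-solution class bounds $\sup_B w_k - \sup_{\partial B}w_k^+$ by $C\|(-\hat g_k)^-\|_{L^n} = C\|(\hat g_k)^+\|_{L^n}$, not by $\|(-\hat g_k)^+\|_{L^n}$ as you wrote. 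Third, the pointwise inequality you invoke should be $(\hat g_k)^+ \le (\hat g_k-\hat g)^+$ on the set where $\hat g\le 0$ (which contains the ball where $\hat g\le-\eta/2$), obtained via $(a+b)^+\le a^+ + b^+$; your statement ``$(-\hat g_k)^+\le(\hat g_k-\hat g)^+$ on the set where $-\hat g\le -\eta/2$'' has both the wrong positive part and the wrong region. With these corrections, $\|(\hat g_k)^+\|_{L^n}\le|B_\rho|^{1/n-1/p}\|(\hat g_k-\hat g)^+\|_{L^p}\to 0$ by \eqref{Est1} and $p\ge n$, and the contradiction closes exactly as you intend (after normalizing $\varphi$ by a constant so that $\sup_{\partial B_\rho(z)}w_\infty=0<w_\infty(z)$, which makes the boundary term $\sup_{\partial B}w_k^+$ vanish in the limit). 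The rest of the sketch (reduction to a strict maximum via $+\delta|x-z|^2$, passage of maximizing points, symmetric super-solution case, and the $C^0$ case via equivalence of viscosity notions for continuous data) is sound.
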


In this context, of compactness and stability arguments, the following ABP Maximum Principle will also be extremely important (see \cite[Theorem 2.1]{LiZhang} for a proof).
\begin{lemma}[{\bf ABP Maximum Principle}]\label{ABP-fullversion}
Let $\Omega\subset \mathrm{B}_{1}$ and $u$ satisfying
\begin{equation*}
\left\{
\begin{array}{rclcl}
u\in \mathcal{S}(\lambda,\Lambda,f) &\mbox{in}&   \Omega \\
\beta\cdot Du+\gamma u=g  &\mbox{on}&  \Gamma.
\end{array}
\right.
\end{equation*}
Suppose that exists $\varsigma\in \partial \mathrm{B}_{1}$ such that $\beta\cdot\varsigma\geq \mu_0$ and $\gamma\le 0$ in $\Gamma$. Then,
	\begin{eqnarray*}
		\|u\|_{L^{\infty}(\Omega)}\leq \|u\|_{L^{\infty}(\partial \Omega\setminus \Gamma)}+\mathrm{C}(n, \lambda, \Lambda, \mu_0)(\| g\|_{L^{\infty}(\Gamma)}+\| f\|_{L^{n}(\Omega)})
	\end{eqnarray*}
\end{lemma}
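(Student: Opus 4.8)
The plan is to reduce the oblique maximum principle to the classical Alexandrov--Bakelman--Pucci estimate, the point being that the transversality $\beta\cdot\varsigma\ge\mu_0$ together with $\gamma\le 0$ prevents the portion $\Gamma$ from ever entering the upper contact set for a suitable family of supporting hyperplanes. I organize it in four steps.

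\textbf{Step 1 (Reduction).} Replacing $u$ by $-u$, $f$ by $-f$, $g$ by $-g$ (which preserves $\gamma\le 0$ and the Pucci class, since $u\in \overline{S}(\lambda,\Lambda,f)\Leftrightarrow -u\in\underline{S}(\lambda,\Lambda,-f)$), it suffices to bound $\sup_\Omega u$ from above when $u\in\overline{S}(\lambda,\Lambda,f)$ satisfies $\mathfrak{B}(Du,u,x)=g$ on $\Gamma$. Put $M\defeq\|u^{+}\|_{L^{\infty}(\partial\Omega\setminus\Gamma)}$ and $K\defeq\|g^{-}\|_{L^{\infty}(\Gamma)}+\|f^{-}\|_{L^{n}(\Omega)}$; we may assume $\sup_\Omega u>M\ge 0$, so the supremum is attained at some $x^{\ast}\in\Omega\cup\Gamma$ with $u(x^{\ast})>0$, and only gradients of size $\lesssim \sup_\Omega u$ will be relevant.

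\textbf{Step 2 (Contact set and interior ABP).} For $\xi\in\mathbb{R}^{n}$ slide the affine function $x\mapsto\xi\cdot x+c$ down from $+\infty$ until it first touches $u$ on the compact set $\overline{\Omega}$; let $\Gamma_u^{+}$ be the resulting upper contact set (touching on $\partial\Omega\setminus\Gamma$ using the value $M$, and on $\Gamma$ interpreted in the viscosity sense of Definition~\ref{VSC_0}), and $\nabla\Gamma_u^{+}\subset\mathbb{R}^n$ the set of realized gradients. Since $\textrm{diam}\,\Omega\le 2$, the standard sliding-plane/measure estimate gives $B_{\rho}(0)\subset\nabla\Gamma_u^{+}$ for every plane whose contact point carries a value $>M$, with $\rho\sim(\sup_\Omega u-M)$. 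On $\Gamma_u^{+}\cap\Omega$ the function $u$ is touched from above by an affine function, hence in the measure-theoretic (Aleksandrov / Pucci-class, as in \cite{CC,CCKS}) sense $D^{2}u\le 0$ and $\mathcal{M}^{+}_{\lambda,\Lambda}(D^{2}u)\ge f$ forces $\lambda\,\Delta u\ge f$, so $|\Delta u|\le f^{-}/\lambda$ and, by the arithmetic--geometric inequality, $|\det D^{2}u|\le(n\lambda)^{-n}(f^{-})^{n}$; by the area formula the contribution of the interior contact points satisfies $\mathcal{L}^{n}\big(Du(\Gamma_u^{+}\cap\Omega)\big)\le(n\lambda)^{-n}\|f^{-}\|_{L^{n}(\Omega)}^{n}$.

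\textbf{Step 3 (The oblique boundary contributes nothing).} Suppose an affine $\ell$ with gradient $\xi$ touches $u$ from above at $x_{0}\in\Gamma$ with $u(x_{0})>M\ge 0$. Using $\ell$ as a (smooth) test function at $x_0\in\Gamma$ in Definition~\ref{VSC_0} yields $\beta(x_{0})\cdot\xi+\gamma(x_{0})u(x_{0})\ge g(x_{0})$, and since $\gamma\le 0$, $u(x_0)>0$ this gives $\beta(x_{0})\cdot\xi\ge g(x_{0})\ge-\|g^{-}\|_{L^{\infty}(\Gamma)}$. Contrapositively, any gradient in the convex set $G\defeq\bigcap_{x_0\in\Gamma}\{\xi:\beta(x_0)\cdot\xi<-\|g^{-}\|_{L^{\infty}(\Gamma)}\}$ cannot be realized by a supporting plane touching on $\Gamma$ above the level $M$; and $G$ contains a solid cone about the direction $-\varsigma$ of aperture controlled solely by $\mu_0$, because for $\xi=-t\varsigma+\eta$ with $t\mu_0-|\eta|>\|g^{-}\|_{L^{\infty}(\Gamma)}$ one has $\beta(x_0)\cdot\xi\le-t\mu_0+|\eta|<-\|g^{-}\|_{L^{\infty}(\Gamma)}$ for every $x_0\in\Gamma$. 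Restricting the measure count of Step~2 to $B_{\rho}(0)\cap G$ — whose measure is $\ge c(n,\mu_0)\rho^{n}$ once $\rho\ge C(\mu_0)\|g^{-}\|_{L^{\infty}(\Gamma)}$, since $0$ lies within bounded distance of the cone's axis — every such plane must touch in $\Omega$ (a contact on $\partial\Omega\setminus\Gamma$ would force $u(x_0)\le M$), so by Step~2
\[
c(n,\mu_0)\big(\sup_\Omega u-M\big)^{n}\le (n\lambda)^{-n}\|f^{-}\|_{L^{n}(\Omega)}^{n}+C(\mu_0)\|g^{-}\|_{L^{\infty}(\Gamma)}^{n},
\]
which rearranges to $\sup_\Omega u\le M+C(n,\lambda,\Lambda,\mu_0)K$. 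Undoing Step~1 yields the two-sided estimate.

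\textbf{Expected main obstacle.} The delicate point is Step~3: making the restricted sliding-plane/measure count fully rigorous when the gradients must stay in the $\mu_0$-cone $G$ while the coefficient field $\beta$ varies along $\Gamma$, and certifying that the constant depends only on $n,\lambda,\Lambda,\mu_0$ (and not on $\|\beta\|_{C^{1,\alpha}}$, $\|\gamma\|$, or $\textrm{diam}\,\Omega$). Equivalently, one may first translate the gradient frame by setting $\tilde u(x)=u(x)+a\,\varsigma\cdot x$ with $a=C(\mu_0)(1+\|g^{-}\|_{L^{\infty}(\Gamma)})$, which leaves $D^2\tilde u=D^2u$ (hence the ellipticity class) unchanged and shifts $\nabla\Gamma_u^{+}$ by $a\varsigma$, so that the ``small gradients'' relevant to $\tilde u$ correspond to gradients of $u$ near $-a\varsigma$, automatically inside $G$; this turns the problem into a pure Dirichlet ABP on $\partial\Omega\setminus\Gamma$, at the cost of the controlled correction $|a\,\varsigma\cdot x|\le a$ on $\Omega\subset B_1$. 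This is essentially the content of \cite[Theorem~2.1]{LiZhang}, to which we defer for the full details.
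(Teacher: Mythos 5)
The paper does not prove this lemma at all: it is quoted verbatim from the literature, with the proof deferred to \cite[Theorem 2.1]{LiZhang}. So there is no in-paper argument to compare against line by line; what you have done is reconstruct the standard oblique ABP argument that underlies that citation, and your reconstruction is essentially sound. The two key mechanisms are the right ones: (i) at an upper contact point on $\Gamma$ with touching value above $M\ge 0$, the viscosity inequality of Definition \ref{VSC_0} together with $\gamma\le 0$ forces $\beta(x_0)\cdot\xi\ge -\|g^-\|_{L^\infty(\Gamma)}$, so slopes in a solid cone about $-\varsigma$ of aperture governed by $\mu_0$ are never realized on $\Gamma$; and (ii) restricting the classical sliding-plane measure count to that cone, every admissible slope of size $\lesssim \sup_\Omega u-M$ must be attained at an interior contact point, where the Pucci inequality gives $|\det D^2u|\le (n\lambda)^{-n}(f^-)^n$ and hence the $\|f^-\|_{L^n}$ bound. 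This yields exactly the constant dependence $C(n,\lambda,\Lambda,\mu_0)$ claimed in Lemma \ref{ABP-fullversion}, with $\mathrm{diam}\,\Omega\le 2$ absorbed.

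Two points deserve care if this sketch were to be written out. First, Step 2 cannot literally invoke $D^2u\le 0$ and the area formula for a merely continuous Pucci-class function; the rigorous route is the convex-envelope / sup-convolution version of ABP as in \cite{CC,CCKS} (which you do cite, but the phrase ``$\mathcal{M}^+_{\lambda,\Lambda}(D^2u)\ge f$ forces $\lambda\Delta u\ge f$'' should be understood through that machinery, applied to the envelope rather than to $u$ pointwise). Second, the closing remark about passing to $\tilde u(x)=u(x)+a\,\varsigma\cdot x$ is slightly misleading as stated: $\tilde u$ does not satisfy a clean Dirichlet-only or sign-definite oblique condition, because the boundary operator applied to $\tilde u$ picks up the term $\gamma(x)\,a\,\varsigma\cdot x$, whose control would reintroduce $\|\gamma\|_{L^\infty}$; the correct bookkeeping is the one you already perform in Step 3, namely to run the exclusion argument on $u$ itself, using only the sign of $\gamma$ at contact points where the touching value is positive. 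With those caveats, your argument is a faithful stand-in for the proof the paper outsources to \cite{LiZhang}.
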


In the sequel, we comment on the existence and uniqueness of viscosity solutions with oblique boundary conditions. For that purpose, we consider the following problem:
\begin{eqnarray}\label{existence}
\left\{
\begin{array}{rclcl}
F(D^2u, x) &=& f(x) & \mbox{in} & \Omega,\\
\beta\cdot Du+\gamma u&=& g & \mbox{on} & \Gamma\\
u(x)&=&\varphi(x) & \mbox{in} & \partial \Omega\setminus \Gamma,
\end{array}
\right.
\end{eqnarray}
where $\Gamma$ is relatively open of $\partial\Omega$. In \cite{Bessa} can be seen a proof for the next theorem. For this reason, we will omit it here.

\begin{theorem}[{\bf Existence and Uniqueness}]\label{Existencia}
Let $\Gamma\in C^{2}$, $\beta\in C^{2}(\overline{\Gamma})$, $\gamma\le 0$ and $\varphi\in C^{0}(\partial \Omega\setminus \Gamma)$. Suppose that there exists $\varsigma\in\partial \mathrm{B}_{1}$ such that $\beta\cdot \varsigma\ge \mu_0$ on $\Gamma$ and assume that there exists be a modulus of continuity $\rho$ for the oscilation coefficients, that is, $\rho$ is nondecreasing with $ \displaystyle \lim_{t \to 0} \rho(t) =0$ and
$$
\Psi_{F}(x,y) \le \rho(|x-y|).
$$
In addition, suppose that $\Omega$ satisfies an exterior cone condition at any $x\in \partial \Omega\setminus \overline{\Gamma}$ and satisfies an exterior sphere condition at any $x\in \overline{\Gamma}\cap (\partial \Omega\setminus \Gamma)$. Then, there exists a unique viscosity solution $u\in C^{0}(\overline{\Omega})$ of \eqref{existence}.
\end{theorem}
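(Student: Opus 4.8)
The plan is to establish uniqueness through a comparison principle built on the ABP estimate of Lemma~\ref{ABP-fullversion}, and existence through Perron's method with boundary barriers. For uniqueness, let $u$ be a viscosity subsolution and $v$ a viscosity supersolution of \eqref{existence} with $u\le v$ on $\partial\Omega\setminus\Gamma$; the aim is to show $u\le v$ on $\overline{\Omega}$. Since $F=F(D^2\cdot,x)$ is uniformly elliptic with $x$-oscillation controlled by the modulus $\rho$ (i.e. $\Psi_F(x,y)\le\rho(|x-y|)$), the classical doubling-of-variables argument together with the Jensen--Ishii theorem on sums shows that $w:=u-v$ is a viscosity subsolution of $\mathcal{M}^{+}_{\lambda,\Lambda}(D^2 w)\ge 0$ in $\Omega$, the modulus $\rho$ being exactly what absorbs the error coming from freezing $x$. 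On $\Gamma$, the viscosity oblique inequalities satisfied by $u$ and $v$, together with $\gamma\le 0$, yield $\beta\cdot Dw+\gamma w\ge 0$ in the viscosity sense, while on $\partial\Omega\setminus\Gamma$ we have $w\le 0$. Hence $w$ belongs to the appropriate one-sided Pucci class $\mathcal{S}(\lambda,\Lambda,0)$, and Lemma~\ref{ABP-fullversion}, applied with $f\equiv 0$, $g\equiv 0$ and the fixed direction $\varsigma$ with $\beta\cdot\varsigma\ge\mu_0$, gives $\sup_{\Omega}w\le\sup_{\partial\Omega\setminus\Gamma}w^{+}=0$. Taking $u,v$ to be two solutions and interchanging them proves uniqueness in $C^{0}(\overline{\Omega})$.

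For existence, the next step is to build local barriers at every boundary point. At a point $x_0\in\partial\Omega\setminus\overline{\Gamma}$, where the Dirichlet datum $\varphi$ is prescribed, the exterior cone condition produces, via the Miller / Caffarelli--Cabr\'e barriers for the Pucci extremal operators, a local supersolution of the extremal equation equal to $\varphi(x_0)$ at $x_0$ and lying above $\varphi$ near $x_0$ (and symmetrically from below). At a point $x_0\in\Gamma$, to check the oblique condition for the Perron envelope one perturbs any touching test function $\varphi$ by a small multiple of $x\mapsto|x-x_0|^2-K\,\varsigma\cdot(x-x_0)$ plus a negative constant: the obliqueness $\beta\cdot\varsigma\ge\mu_0$ makes the corresponding boundary inequality strict in the correct direction, producing a strict subsolution lying below the envelope that would exceed it at $x_0$ unless the oblique inequality already holds. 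At the transition points $\overline{\Gamma}\cap(\partial\Omega\setminus\Gamma)$ one uses the exterior sphere condition to get the Dirichlet-type barrier and matches it with the oblique barrier coming from the $\Gamma$-side.

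Finally, using Lemma~\ref{ABP-fullversion} one produces ordered global functions $\underline{u}\le\overline{u}$ that are, respectively, a viscosity sub- and supersolution of \eqref{existence} with $\underline{u}\le\varphi\le\overline{u}$ on $\partial\Omega\setminus\Gamma$ and the correct one-sided behavior on $\Gamma$. Set
\[
u(x):=\sup\bigl\{\,v(x)\ :\ v\ \text{a viscosity subsolution of \eqref{existence}},\ \underline{u}\le v\le\overline{u}\,\bigr\}.
\]
The standard Perron/Ishii arguments, whose stability input is provided by Lemma~\ref{Est}, show that the upper semicontinuous envelope $u^{*}$ is a viscosity subsolution and the lower semicontinuous envelope $u_{*}$ a viscosity supersolution of the interior equation and of both boundary conditions; the barriers above force $u^{*}=u_{*}=\varphi$ continuously on $\partial\Omega\setminus\Gamma$ and the oblique inequalities on $\Gamma$. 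By the comparison principle of the first step $u^{*}\le u_{*}$, and since trivially $u_{*}\le u\le u^{*}$ we conclude that $u=u^{*}=u_{*}\in C^{0}(\overline{\Omega})$ is the desired viscosity solution.

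The delicate point is the boundary analysis on $\Gamma$: verifying that the Perron envelope actually satisfies the oblique derivative condition in the viscosity sense --- which is precisely where the uniform fixed-direction obliqueness $\beta\cdot\varsigma\ge\mu_0$ is essential for constructing the perturbed test functions --- and handling the transition set $\overline{\Gamma}\cap(\partial\Omega\setminus\Gamma)$, where the Dirichlet and oblique conditions meet and the exterior sphere condition must be invoked to reconcile the two families of barriers. A secondary difficulty is that the comparison principle with $x$-dependent $F$ relies on the modulus $\rho$ to control the error in the doubling-of-variables estimate, and some care is needed to keep that argument valid up to the boundary portion $\Gamma$.
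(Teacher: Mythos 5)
The paper itself does not prove Theorem \ref{Existencia}: it defers entirely to \cite{Bessa}, where the result is obtained by Perron's method combined with a comparison principle for oblique derivative problems (in the spirit of \cite{Hi} and \cite{Lieberman}) and with barrier constructions at the two boundary portions. Your architecture --- comparison for uniqueness, a Perron envelope squeezed between global sub- and supersolutions, cone/sphere barriers on $\partial\Omega\setminus\Gamma$ and at the transition set $\overline{\Gamma}\cap(\partial\Omega\setminus\Gamma)$, the fixed obliqueness $\beta\cdot\varsigma\ge\mu_0$ to verify the boundary inequality for the envelope, stability via Lemma \ref{Est} --- is the same skeleton, so at that level the route is the expected one.

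The genuine gap is in your uniqueness step. You assert that doubling of variables plus the Jensen--Ishii lemma shows $w=u-v$ is a viscosity subsolution of $\mathcal{M}^{+}_{\lambda,\Lambda}(D^2w)\ge 0$, ``the modulus $\rho$ being exactly what absorbs the error coming from freezing $x$.'' It does not. The hypothesis is $\Psi_F(x,y)\le\rho(|x-y|)$, i.e. $|F(\mathrm{X},x)-F(\mathrm{X},y)|\le\rho(|x-y|)\,(1+\|\mathrm{X}\|)$, while in the doubled maximization the matrices produced by Ishii's lemma are only bounded by a constant times the doubling parameter $\alpha$, with $\alpha\to\infty$ and $\alpha|x_\alpha-y_\alpha|^2\to0$. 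The freezing error is therefore of size $\rho(|x_\alpha-y_\alpha|)\,(1+C\alpha)$, which need not tend to zero (even for Lipschitz $\rho$, since $\alpha|x_\alpha-y_\alpha|$ is not controlled by $\alpha|x_\alpha-y_\alpha|^2\to 0$). This is exactly the classical obstruction: an $x$-modulus measured against $1+\|\mathrm{X}\|$ does not imply the structure condition needed for comparison between two merely continuous viscosity solutions, and in the linear case uniqueness under continuous coefficients is recovered through approximation by regular problems and $W^{2,p}$/strong-solution comparison, not through naive doubling. For the same reason, your claim that $w$ satisfies $\beta\cdot Dw+\gamma w\ge0$ on $\Gamma$ in the viscosity sense does not follow by simply subtracting the two boundary inequalities; it requires the up-to-the-boundary doubling with corrected test functions carried out in \cite{Hi} and \cite{Lieberman}. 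As written, then, the comparison principle (and hence the identification $u^{*}=u_{*}$ that closes Perron's method) is asserted rather than proved; to repair it you should either invoke and verify the hypotheses of the comparison theorems for oblique problems used in \cite{Bessa} (via \cite{Hi}, \cite{Lieberman}, \cite{CCKS}), or compare against the more regular solution obtained by approximating the coefficients and passing to the limit.
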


\begin{remark}
For more results on the existence and uniqueness of solutions in the sense of viscosity of problems with oblique edge conditions, we recommend to readers the works by Hishi and Lions \cite{Hi} and Lierberman's Book \cite{Lieberman}.
\end{remark}
\subsection{A review on  weights and weighted Lorentz spaces}

\hspace{0.4cm} We list some properties already known about weights and weighted Lorentz spaces that will be indispensable for the course of this work. First, on weight we have the following lemma whose proof we found in chapter One of  \cite{Turesson}.

\begin{lemma}\label{propriedadesdospesos}
Let $\omega$ be an $\mathfrak{A}_{s}$ weight for some $s\in(1,\infty)$. Then,
\begin{enumerate}
\item[(a)](increasing) If $r\ge s$, then $\omega$ belong to class $\mathfrak{A}_{r}$ and $[\omega]_{r}\leq [\omega]_{s}$. 
\item[(b)] (open-end) There exists a small enough constant $\varepsilon_{0}>0$ depending  only on $n$, $s$ and $[\omega]_{s}$ such that $\omega\in \mathfrak{A}_{s-\varepsilon_{0}}$ with $s-\varepsilon_{0}>1$.
\item[(c)] (strong doubling) There exist two postive constants $c_{1}$ and $\theta\in(0,1)$ depending only on $n$, $s$ and $[\omega]_{s}$ such that 
\begin{eqnarray*}
\frac{1}{[\omega]_{s}}\left(\frac{|E|}{|\Omega|}\right)^{s}\leq \frac{\omega(E)}{\omega(\Omega)}\leq c_{1}\left(\frac{|E|}{|\Omega|}\right)^{\theta}.
\end{eqnarray*}  
for all $E\subset\Omega$ Lebesgue measurable set. 
\end{enumerate} 
\end{lemma}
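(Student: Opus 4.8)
The statement to prove is Lemma~\ref{propriedadesdospesos}, collecting three standard properties of Muckenhoupt $\mathfrak{A}_s$ weights. My plan is to prove each item in turn, since each is a classical fact in the theory of $\mathfrak{A}_s$ weights whose verification is essentially self-contained once one unravels the definition of $[\omega]_s$.

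For item (a), the monotonicity of the classes, I would fix $r \ge s$ and a ball $B$, and compare the ``dual'' factor $\left(\intav{B}\omega^{-1/(r-1)}\right)^{r-1}$ with $\left(\intav{B}\omega^{-1/(s-1)}\right)^{s-1}$. The key is Jensen's inequality (or equivalently the power-mean inequality): since $\frac{s-1}{r-1}\in(0,1]$, the map $t\mapsto t^{(s-1)/(r-1)}$ is concave, so
\begin{eqnarray*}
\left(\intav{B}\omega^{-1/(r-1)}\,dx\right)^{r-1} = \left(\intav{B}\left(\omega^{-1/(s-1)}\right)^{(s-1)/(r-1)}\,dx\right)^{r-1} \le \left(\intav{B}\omega^{-1/(s-1)}\,dx\right)^{s-1}.
\end{eqnarray*}
Multiplying by $\intav{B}\omega\,dx$ and taking the supremum over $B$ gives $[\omega]_r \le [\omega]_s < \infty$, hence $\omega\in\mathfrak{A}_r$. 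For item (c), the strong doubling property, the lower bound is the easy direction: for $E\subset\Omega$ one writes $\frac{|E|}{|\Omega|} = \intav{\Omega}\mathbf{1}_E\,dx$ and applies Hölder's inequality with exponents $s$ and $s/(s-1)$ against the pair $\omega^{1/s}, \omega^{-1/s}$ to get $\left(\frac{|E|}{|\Omega|}\right)^s \le \frac{\omega(E)}{|\Omega|}\cdot\left(\intav{\Omega}\omega^{-1/(s-1)}\right)^{s-1}$, and then the definition of $[\omega]_s$ bounds the last factor by $[\omega]_s/\left(\intav{\Omega}\omega\right) = [\omega]_s|\Omega|/\omega(\Omega)$, which rearranges to the claimed inequality. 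The upper bound $\frac{\omega(E)}{\omega(\Omega)}\le c_1\left(\frac{|E|}{|\Omega|}\right)^\theta$ is the genuinely substantive part and is exactly where the reverse Hölder inequality enters.

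The main obstacle is therefore item (b), the open-end (self-improvement) property, together with the reverse Hölder inequality needed for the upper bound in (c). The standard route is: first establish that any $\omega\in\mathfrak{A}_s$ satisfies a reverse Hölder inequality, i.e. there exist $\delta>0$ and $C>0$ (depending only on $n,s,[\omega]_s$) with $\left(\intav{B}\omega^{1+\delta}\right)^{1/(1+\delta)} \le C\intav{B}\omega$ for all balls $B$; this is proved via a Calderón–Zygmund stopping-time decomposition applied to $\omega$ on $B$, exploiting the $\mathfrak{A}_s$ condition to control the measure of super-level sets geometrically and then summing a geometric series. From the reverse Hölder inequality, the upper bound in (c) follows immediately by Hölder: $\omega(E) \le |E|^{\delta/(1+\delta)}\left(\int_\Omega\omega^{1+\delta}\right)^{1/(1+\delta)} \le |E|^{\delta/(1+\delta)}|\Omega|^{-\delta/(1+\delta)}C\,\omega(\Omega)$, giving $\theta = \delta/(1+\delta)\in(0,1)$. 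For the open-end property one combines the reverse Hölder inequality for $\omega$ with the observation that $\mathfrak{A}_s$ is equivalent to $\sigma \defeq \omega^{-1/(s-1)}\in\mathfrak{A}_{s'}$ with $s'=s/(s-1)$; applying reverse Hölder to $\sigma$ yields a small $\varepsilon_0>0$ for which the dual integral condition survives with exponent $s-\varepsilon_0$, i.e. $\omega\in\mathfrak{A}_{s-\varepsilon_0}$.

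Since all three items are textbook facts, in the write-up I would present the short computations for (a) and the lower bound of (c) in full, and for (b) and the upper bound of (c) cite the reverse Hölder inequality from the reference (\cite{Turesson}, Chapter One) rather than reproducing the stopping-time argument, noting only that the constants $\varepsilon_0, c_1, \theta$ are the ones furnished there and depend solely on $n$, $s$ and $[\omega]_s$. This keeps the lemma's proof proportionate to its role as a preliminary.
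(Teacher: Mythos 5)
Your proposal is correct, but it is worth pointing out that the paper does not actually prove Lemma \ref{propriedadesdospesos}: it simply refers the reader to Chapter One of \cite{Turesson} for all three items. Your write-up is therefore \emph{more} self-contained than the paper's treatment. The Jensen/power-mean computation for (a) and the H\"older argument for the lower bound in (c) are the standard ones and are sound, and deferring the reverse H\"older inequality to the reference for (b) and for the upper bound in (c) is the classical route: the open-end property follows, as you say, by applying reverse H\"older to the dual weight $\sigma=\omega^{-1/(s-1)}\in\mathfrak{A}_{s/(s-1)}$ (choosing $\varepsilon_0=(s-1)\delta/(1+\delta)$ so that the dual exponent matches), and $\theta=\delta/(1+\delta)$ gives the strong-doubling exponent, with all constants depending only on $n$, $s$, $[\omega]_s$ as required. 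One small caution, inherited from the statement itself rather than from your argument: $[\omega]_s$ is defined by a supremum over \emph{balls}, so the displayed inequalities with a general bounded measurable $\Omega$ and the sharp constant $1/[\omega]_s$ really presuppose that $\Omega$ is a ball or cube (or that one encloses $\Omega$ in a ball and accepts a geometric factor in the constants); your H\"older computation proves the lower bound verbatim in that setting, and the same proviso should be kept in mind when the lemma is invoked later (e.g.\ in Corollary \ref{corolariodecaimento}).
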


On weighted Lorentz spaces, under certain conditions we can  continuously embedded on the Lebesgue spaces. This is the content of the following lemma, whose proof we found in \cite[Lemma 2.10]{ZhangZhenglorentz}
\begin{lemma}\label{Incluaolorentzlebesgue}
Let $(p,q)\in(n,\infty)\times (0,\infty]$, $\omega$ be an $\mathfrak{A}_{\frac{p}{n}}$ weight and bounded measurable set $E\subset \mathbb{R}^{n}$. If $f\in L^{p,q}_{\omega}(E)$, then for any $r\in[n,p)$, we have $f\in L^{r}(E)$ with the following estimate
\begin{eqnarray*}
\|f\|_{L^{r}(E)}\leq \mathrm{C}_{p,q} \|f\|_{L^{p,q}_{\omega}(E)},
\end{eqnarray*}
where $\mathrm{C}_{p,q}$ is a positive constant depending only on $n$, $p$, $r$, $[\omega]_{\frac{p}{n}}$ and $|E|$. 
\end{lemma}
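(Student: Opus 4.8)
The plan is to transfer the information carried by the weighted Lorentz quasi-norm of $f$ to its unweighted $L^{r}$-norm by means of the layer--cake formula, after comparing, level by level, the Lebesgue measure of a superlevel set of $f$ with its $\omega$-measure. Throughout, put $A(t):=\{x\in E:\ |f(x)|>t\}$.

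First, two elementary reductions. Cavalieri's principle gives $\|f\|_{L^{r}(E)}^{r}=r\int_{0}^{\infty}t^{r-1}|A(t)|\,dt$, so it suffices to control $|A(t)|$ in terms of $t$. Moreover, since $t\mapsto\omega(A(t))$ is nonincreasing, for finite $q$ and every $t>0$ one has
$$
t^{q}\,\omega(A(t))^{q/p}\ \le\ q\int_{0}^{t}\tau^{q-1}\,\omega(A(\tau))^{q/p}\,d\tau\ \le\ \|f\|_{L^{p,q}_{\omega}(E)}^{q},
$$
hence $\omega(A(t))\le\|f\|_{L^{p,q}_{\omega}(E)}^{\,p}t^{-p}$ (for $q=\infty$ this is the definition itself), and of course also $\omega(A(t))\le\omega(E)$; in particular $L^{p,q}_{\omega}(E)\hookrightarrow L^{p,\infty}_{\omega}(E)$.

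Second, the core step: I would convert the $\omega$-measure bound into a Lebesgue-measure bound through the strong-doubling estimate of Lemma \ref{propriedadesdospesos}(c). Since $\omega\in\mathfrak{A}_{p/n}$, the open-end property (Lemma \ref{propriedadesdospesos}(b)) yields $\varepsilon_{0}>0$ with $s_{0}:=\tfrac{p}{n}-\varepsilon_{0}>1$; set $p_{0}:=ns_{0}\in(n,p)$. Applying Lemma \ref{propriedadesdospesos}(c) with ambient set $E$, subset $A(t)$, and exponent $s_{0}$ gives
$$
|A(t)|\ \le\ |E|\,[\omega]_{s_{0}}^{1/s_{0}}\,\omega(E)^{-1/s_{0}}\,\omega(A(t))^{1/s_{0}}\ =\ \mathrm{C}_{0}\,\omega(A(t))^{n/p_{0}},
$$
with $\mathrm{C}_{0}=\mathrm{C}_{0}(n,p,[\omega]_{p/n},|E|,\omega(E))$. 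Inserting here the two bounds for $\omega(A(t))$ and splitting the Cavalieri integral at an arbitrary threshold $t_{\ast}>0$,
$$
\|f\|_{L^{r}(E)}^{r}\ \le\ r\,\mathrm{C}_{0}\left[\,\omega(E)^{n/p_{0}}\!\int_{0}^{t_{\ast}}\!t^{r-1}\,dt\ +\ \|f\|_{L^{p,q}_{\omega}(E)}^{\,np/p_{0}}\!\int_{t_{\ast}}^{\infty}\!t^{\,r-1-np/p_{0}}\,dt\,\right].
$$
The first integral is $t_{\ast}^{r}/r$; the second converges because, after the self-improvement, $r-1-\tfrac{np}{p_{0}}<-1$, and equals $t_{\ast}^{\,r-np/p_{0}}\big/\big(\tfrac{np}{p_{0}}-r\big)$. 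Optimizing in $t_{\ast}$ (the balanced choice is $t_{\ast}\sim\|f\|_{L^{p,q}_{\omega}(E)}\,\omega(E)^{-1/p}$) collapses the right-hand side to a constant multiple of $\|f\|_{L^{p,q}_{\omega}(E)}^{r}$ with the asserted dependence, which is the claim.

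The step I expect to be the real obstacle is the convergence of that tail integral: the comparison one gets directly from $\mathfrak{A}_{p/n}$ carries the exponent $n/p$ rather than $n/p_{0}$, which only places $f$ in (unweighted) weak-$L^{n}$ and therefore misses the target range. It is precisely the self-improvement $\mathfrak{A}_{p/n}\subset\mathfrak{A}_{p/n-\varepsilon_{0}}$ that lowers the effective exponent $p_{0}$ strictly below $p$ and restores integrability, and tracking how $\varepsilon_{0}$ (equivalently $p_{0}$) depends on $[\omega]_{p/n}$ is what fixes the parameter list of the final constant. An essentially equivalent route avoids superlevel sets: finiteness of $\omega(E)$ first embeds $L^{p,q}_{\omega}(E)$ into $L^{p_{1}}_{\omega}(E)$ for every $p_{1}<p$, after which Hölder's inequality with weight $\omega^{\pm r/p_{1}}$ closes the estimate, the integrability of $\omega^{-r/(p_{1}-r)}$ over $E$ (for $p_{1}$ close enough to $p$) being, once more, a consequence of the Muckenhoupt condition via the reverse Hölder inequality applied to $\omega^{-n/(p-n)}$.
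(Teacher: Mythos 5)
The paper does not actually give a proof of this lemma; it is cited verbatim from \cite[Lemma 2.10]{ZhangZhenglorentz}, so there is no in-house argument to compare your attempt against. Evaluating your proof on its own, the superlevel-set reduction, the weak-type bound, and the conversion of $\omega$-measure to Lebesgue measure via strong doubling are all correct calculations. The gap is in the range of $r$ that the argument actually reaches, and you flag the wrong step as the subtle one.

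Your tail integral converges precisely when $r < np/p_0 = p/s_0 = np/(p-n\varepsilon_0)$. Rearranging, for $r>n$ this requires $\varepsilon_0 > \frac{p(r-n)}{rn}$, a \emph{definite positive} lower bound. But the open-end property (Lemma \ref{propriedadesdospesos}(b)) only supplies the existence of \emph{some} $\varepsilon_0>0$ depending on $n,\ p/n,\ [\omega]_{p/n}$, with no control from below beyond its positivity; equivalently $p_0$ is only known to lie in $(n,p)$, so the upper endpoint $np/p_0$ of the admissible interval is some number in $(n,p)$ that cannot be pushed up to $p$ in general. Hence your argument proves the embedding only for $r\in[n,\,np/p_0)$, which is a proper subinterval of the claimed range $[n,p)$ whenever $\varepsilon_0$ is small. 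Saying that self-improvement ``restores integrability'' overstates what it gives: it restores integrability for $r=n$ (any $\varepsilon_0>0$ suffices there) and for $r$ close to $n$, not for the whole interval. For what it's worth, power weights make the obstruction explicit: with $n=2,\ p=4,\ \omega(x)=|x|$ (so $\omega\in\mathfrak{A}_2$), $E=\mathrm{B}_1$, and $f(x)=|x|^{-0.7}$ one has $f\in L^{4,q}_\omega(\mathrm{B}_1)$ for every $q$, yet $f\notin L^3(\mathrm{B}_1)$; so no argument can reach $r=3$ from $\mathfrak{A}_{p/n}$ membership alone, and the gap you left is not a technicality that a more careful bookkeeping of $\varepsilon_0$ can close.

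Your alternative Hölder plus reverse-Hölder route suffers from exactly the same issue: the reverse-Hölder exponent $\rho>1$ for $\omega^{-n/(p-n)}$ is again only known to be \emph{some} number bigger than $1$, and the range of attainable $r$ is $r\le \rho np/(p+(\rho-1)n)$, which again falls short of $p$ for fixed $\rho$. A further, smaller point: your constant $\mathrm{C}_0$, obtained from strong doubling and the optimal splitting, unavoidably carries a factor $\omega(E)^{-r/p}$, so the final constant is not a function of $|E|$ alone as the lemma's statement asserts; a scaling $\omega\mapsto c\,\omega$ (which leaves $[\omega]_{p/n}$, $|E|$, and $\|f\|_{L^r}$ unchanged but multiplies $\|f\|_{L^{p,q}_\omega}$ by $c^{1/p}$) shows no such constant can exist, so the dependence on $\omega(E)$ in your estimate is not a defect of your method but is forced. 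In short: the machinery (layer-cake, weak type, strong doubling) is the right one, but the range $[n,p)$ is not reached, and the step you identified as merely ``tracking $\varepsilon_0$'' is where the argument actually breaks.
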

 
 We recall of Hardy-Littlewood maximal function. For $f\in L^{1}_{\loc}(\mathbb{R}^{n})$, the Hardy-Littlewood maximal function  de $f$ is defined by
\begin{eqnarray*}
	\mathcal{M}(f)(x)=\sup_{r>0}\intav{B_{r}(x)}|f(y)|dy.
\end{eqnarray*}

Mengesha and Phuc proved an version of classical Hardy-Littlewood-Wiener theorem for Weight Lorentz spaces in \cite[Lemma 3.11]{Mengesha} that we will use later.
\begin{lemma}\label{Hardylpq}
	Let $\omega$ be an $\mathfrak{A}_{s}$ weight for $s\in(1,\infty)$. Then for any $q\in(0,\infty]$ there exists a positive constant C depending only on $n$, $s$, $q$, $[\omega]_{s}$ such that 
	\begin{eqnarray}\label{hardy}
		\|\mathcal{M}f\|_{L^{s,q}_{\omega}(\mathbb{R}^{n})}\leq C\|f\|_{L^{s,q}_{\omega}(\mathbb{R}^{n})}
	\end{eqnarray}
	for all $f\in L^{s,q}_{\omega}(\mathbb{R}^{n})$. Conversely, if inequality \eqref{hardy} holds for all $f\in L^{s,q}_{\omega}(\mathbb{R}^{n})$, then $\omega$ must  be an weight. 
\end{lemma}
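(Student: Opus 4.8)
\noindent The statement has two halves: the boundedness \eqref{hardy} of the Hardy--Littlewood maximal operator $\mathcal{M}$ on $L^{s,q}_{\omega}$ when $\omega\in\mathfrak{A}_{s}$, and its converse. The plan is, for the direct implication, to produce two classical weighted $L^{p}_{\omega}$-bounds for $\mathcal{M}$ with exponents straddling $s$ and then interpolate; and for the converse, to insert into \eqref{hardy} the Muckenhoupt extremizers localised to a ball and read off that $[\omega]_{s}<\infty$.

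\noindent\emph{Direct implication.} By the open-end property (Lemma~\ref{propriedadesdospesos}(b)) there is $\varepsilon_{0}=\varepsilon_{0}(n,s,[\omega]_{s})\in(0,s-1)$ with $\omega\in\mathfrak{A}_{p_{0}}$, $p_{0}:=s-\varepsilon_{0}\in(1,s)$, and by the increasing property (Lemma~\ref{propriedadesdospesos}(a)) $\omega\in\mathfrak{A}_{p_{1}}$, $p_{1}:=2s>s$, with $[\omega]_{p_{0}}$ and $[\omega]_{p_{1}}$ bounded in terms of $[\omega]_{s}$. By the classical Muckenhoupt theorem $\mathcal{M}$ is bounded on $L^{p_{0}}_{\omega}(\mathbb{R}^{n})$ and on $L^{p_{1}}_{\omega}(\mathbb{R}^{n})$ with operator norms controlled by $n,s,[\omega]_{s}$; in particular $\mathcal{M}$ is of weak types $(p_{0},p_{0})$ and $(p_{1},p_{1})$ with respect to the $\sigma$-finite measure $d\mu:=\omega\,dx$. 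Since $\mathcal{M}$ is sublinear and, for $p_{0}<s<p_{1}$ and any $q\in(0,\infty]$, the Lorentz space $L^{s,q}_{\omega}(\mathbb{R}^{n})$ is the Lorentz space $L^{s,q}(\mu)$ over $(\mathbb{R}^{n},\mu)$, which by real interpolation equals $\big(L^{p_{0}}(\mu),L^{p_{1}}(\mu)\big)_{\theta,q}$ with $\tfrac1s=\tfrac{1-\theta}{p_{0}}+\tfrac{\theta}{p_{1}}$, the Marcinkiewicz interpolation theorem (valid for sublinear operators and all second indices) yields \eqref{hardy} with $C=C(n,s,q,[\omega]_{s})$.

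\noindent\emph{Converse.} Assume \eqref{hardy} holds for every $f\in L^{s,q}_{\omega}(\mathbb{R}^{n})$. For a measurable set $E\subset\mathbb{R}^{n}$, Definition~\ref{lorentzspaces} gives $\|\chi_{E}\|_{L^{s,q}_{\omega}}=\omega(E)^{1/s}$, independently of $q$. Fix a ball $B$, set $\sigma:=\omega^{-1/(s-1)}$ and, for $N\in\mathbb{N}$, $f_{N}:=\min(\sigma,N)\chi_{B}$ (bounded, compactly supported, hence in $L^{s,q}_{\omega}$). Since $\mathcal{M}f_{N}(x)\ge\intav{B}f_{N}$ for $x\in B$, monotonicity of the quasi-norm and \eqref{hardy} give $(\intav{B}f_{N})\,\omega(B)^{1/s}=(\intav{B}f_{N})\|\chi_{B}\|_{L^{s,q}_{\omega}}\le\|\mathcal{M}f_{N}\|_{L^{s,q}_{\omega}}\le C\|f_{N}\|_{L^{s,q}_{\omega}}$. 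When $q\ge s$, the embedding $L^{s,s}_{\omega}\hookrightarrow L^{s,q}_{\omega}$ and the pointwise bound $f_{N}^{s}\omega\le f_{N}$ on $B$ (immediate from $\sigma^{s-1}\omega\equiv1$) give $\|f_{N}\|_{L^{s,q}_{\omega}}\le c\,\|f_{N}\|_{L^{s}_{\omega}}=c\big(\textstyle\int_{B}f_{N}^{s}\omega\big)^{1/s}\le c\,|B|^{1/s}(\intav{B}f_{N})^{1/s}$, hence $(\intav{B}f_{N})^{(s-1)/s}\le C'(|B|/\omega(B))^{1/s}=C'(\intav{B}\omega)^{-1/s}$; letting $N\to\infty$ (so that $\intav{B}f_{N}\uparrow\intav{B}\sigma$) and taking the supremum over balls yields $[\omega]_{s}=\sup_{B}\intav{B}\omega\,(\intav{B}\omega^{-1/(s-1)})^{s-1}\le (C')^{s}<\infty$, i.e.\ $\omega\in\mathfrak{A}_{s}$. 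For a general $q\in(0,\infty]$ the same conclusion follows from a sharper estimate of $\|f_{N}\|_{L^{s,q}_{\omega}}$, obtained by splitting its defining integral at the balance radii of the Chebyshev bounds $\omega(\{x\in B:\sigma>t\})\le\min\{t^{-(s-1)}|B|,\,t^{-s}\!\int_{B}\sigma\}$; this computation is the one carried out in \cite[Lemma~3.11]{Mengesha}, which we do not reproduce here.

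\noindent\emph{Main obstacle.} The direct part is routine once the correct real-interpolation description of $L^{s,q}_{\omega}$ over $(\mathbb{R}^{n},\omega\,dx)$ is invoked. The genuine difficulty lies in the converse for $q<s$: there the $L^{s,q}_{\omega}$ quasi-norm of a test function is strictly larger than its $L^{s}_{\omega}$ norm, so one cannot reduce to the classical weak-type converse of Muckenhoupt's theorem, and the $\mathfrak{A}_{s}$-constant must be extracted directly from the Lorentz quasi-norm of the truncated extremizers $f_{N}$.
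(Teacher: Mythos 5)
Note first that the paper does not prove this lemma: it is stated as a citation of \cite[Lemma 3.11]{Mengesha} (the preceding sentence reads ``Mengesha and Phuc proved a version of the classical Hardy--Littlewood--Wiener theorem \ldots that we will use later''), and no proof follows. Your proposal therefore supplies an independent argument where the paper supplies none, so I assess it on its own merits.

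The direct implication is correct and is the standard route. The open-endedness of Lemma~\ref{propriedadesdospesos}(b) gives $\omega\in\mathfrak{A}_{p_{0}}$ for $p_{0}=s-\varepsilon_{0}\in(1,s)$, monotonicity of the $\mathfrak{A}$ classes gives $\omega\in\mathfrak{A}_{p_{1}}$ for $p_{1}>s$ with $[\omega]_{p_{1}}\le[\omega]_{s}$, the classical Muckenhoupt theorem gives strong type $(p_{i},p_{i})$ at both ends with norms controlled by $n,s,[\omega]_{s}$, and since by Definition~\ref{lorentzspaces} the space $L^{s,q}_{\omega}(\mathbb{R}^{n})$ is literally the Lorentz space of the $\sigma$-finite measure $\omega\,dx$, identified with $\big(L^{p_{0}}(\omega\,dx),L^{p_{1}}(\omega\,dx)\big)_{\theta,q}$, the Marcinkiewicz theorem in the Lorentz scale (valid for sublinear operators and all second indices) delivers \eqref{hardy} with the claimed dependence of the constant.

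The converse is sound only for $q\ge s$. There your argument is clean: $\|\chi_{E}\|_{L^{s,q}_{\omega}}=\omega(E)^{1/s}$ for every $q$; the identity $\sigma^{s-1}\omega\equiv 1$ gives $f_{N}^{s}\omega\le f_{N}$ on $B$, hence $\|f_{N}\|_{L^{s}_{\omega}}^{s}\le|B|\intav{B}f_{N}$; the nesting $L^{s,s}_{\omega}\hookrightarrow L^{s,q}_{\omega}$ for $q\ge s$ then closes the estimate $\big(\intav{B}\omega\big)\big(\intav{B}\omega^{-1/(s-1)}\big)^{s-1}\le C'$ after $N\to\infty$, and finiteness of the right side simultaneously forces $\sigma\in L^{1}_{\mathrm{loc}}$. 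For $q<s$, however, the Chebyshev-balancing device you sketch does not close as written. With $t_{*}$ the balance level, the contribution of the second Chebyshev bound to $\|f_{N}\|_{L^{s,q}_{\omega}}^{q}$ is $q\int_{t_{*}}^{N}t^{q-1}\big(t^{-s}\textstyle\int_{B}f_{N}\big)^{q/s}\,dt=q\big(\textstyle\int_{B}f_{N}\big)^{q/s}\log(N/t_{*})$, which diverges as $N\to\infty$; since $a_{N}=\intav{B}f_{N}$ either stays bounded (when $\sigma\in L^{1}(B)$) or grows slower than the truncation level $N$, the resulting inequality $a_{N}\omega(B)^{1/s}\le C\|f_{N}\|_{L^{s,q}_{\omega}}$ only yields $\big(a_{N}^{s-1}\omega(B)/|B|\big)^{q/s}\lesssim\log(N/a_{N})\to\infty$, i.e.\ no bound at all on $[\omega]_{s}$. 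You explicitly defer this case to \cite[Lemma 3.11]{Mengesha}, which is honest, but your accompanying claim that ``the same conclusion follows \ldots by splitting its defining integral at the balance radii'' is not a correct description of a working argument: the $q<s$ converse needs a genuinely different idea (it is the case where boundedness is assumed on the \emph{smallest} Lorentz space and so is the \emph{weakest} hypothesis), and as things stand the proposal establishes the converse only for $q\ge s$.
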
 

Later we will need a sufficient condition for the Hessian in the distributional sense to be in weighted Lorentz spaces. This is guaranteed from the following lemma (for proof cf. \cite[Lemma 2.2]{ZhangZhengmorrey}).

\begin{lemma}\label{caracterizationofhessian}
Let $(p,q)\in(1,\infty)\times (0,\infty]$ and $\omega\in A_{s}$ weight for some $s\in(1,\infty)$. Assume that $u\in C^{0}(E)$ for bounded domain $E\subset \mathbb{R}^{n}$ and set for $r>0$
\begin{eqnarray*}
\Theta(u,r)(x)\defeq  \Theta(u,B_{r}(x)\cap E)(x), \ x\in E.
\end{eqnarray*}
If $\Theta(u,r)\in L^{p,q}_{\omega}(E)$, then Hessian in the distributional sense $D^{2}u\in L^{p,q}_{\omega}(E)$ with estimate
\begin{eqnarray}
\|D^{2}u\|_{L^{p,q}_{\omega}(E)}\leq C(n)\|\Theta(u,r)\|_{L^{p,q}_{\omega}(E)}.
\end{eqnarray} 
\end{lemma}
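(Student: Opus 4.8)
\textbf{Proof plan for Lemma \ref{caracterizationofhessian}.}
The plan is to exploit the well-known characterization of the distributional Hessian via the quantity $\Theta(u,r)$ that is already available in the unweighted $L^p$ setting (this is the content of \cite{CC}, specifically the pointwise control of second differences by $\Theta$, together with the fact that $D^2 u$ exists a.e. and $|D^2 u(x)| \le C(n)\,\Theta(u,r)(x)$ for a.e. $x$), and then upgrade the conclusion from "$\Theta(u,r) \in L^{p,q}_\omega$ implies $D^2 u \in L^{p,q}_\omega$" by a purely measure-theoretic comparison of distribution functions. The first step is therefore to recall from \cite{CC} that if $\Theta(u,r) \in L^p_{\loc}$ (which follows here since $\omega \in \mathfrak{A}_s$ and $\Theta(u,r)\in L^{p,q}_\omega$ force $\Theta(u,r)\in L^{p}_{\loc}(E)$ by the strong doubling property in Lemma \ref{propriedadesdospesos}(c) and an interpolation-type bound, or more directly by restricting to compact subsets where $\omega$ is comparable to a constant), then $u \in W^{2,p}_{\loc}(E)$ and the second-order distributional derivative satisfies the pointwise bound
\begin{equation*}
|D^2 u(x)| \le C(n)\,\Theta(u,r)(x) \qquad \text{for a.e. } x \in E.
\end{equation*}

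The second, and central, step is to transfer this pointwise inequality to the weighted Lorentz norm. Since $|D^2 u(x)| \le C(n)\,\Theta(u,r)(x)$ a.e., we have the inclusion of superlevel sets
\begin{equation*}
\{x \in E : |D^2 u(x)| > t\} \subset \{x \in E : C(n)\,\Theta(u,r)(x) > t\} = \{x \in E : \Theta(u,r)(x) > t/C(n)\},
\end{equation*}
and hence, by monotonicity of $\omega$ as a measure,
\begin{equation*}
\omega(\{|D^2 u| > t\}) \le \omega(\{\Theta(u,r) > t/C(n)\}).
\end{equation*}
Plugging this into the definition of $\|\cdot\|_{L^{p,q}_\omega}$ (Definition \ref{lorentzspaces}) and performing the change of variables $s = t/C(n)$ in the integral (for $q<\infty$) or in the supremum (for $q=\infty$) gives
\begin{equation*}
\|D^2 u\|_{L^{p,q}_\omega(E)} \le C(n)\,\|\Theta(u,r)\|_{L^{p,q}_\omega(E)},
\end{equation*}
which is exactly the claimed estimate; in particular $D^2 u \in L^{p,q}_\omega(E)$.

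The main obstacle is the first step: making rigorous that $\Theta(u,r)\in L^{p,q}_\omega(E)$ genuinely forces $u\in W^{2,p}_{\loc}(E)$ with the stated a.e. pointwise domination of $D^2u$ by $\Theta(u,r)$. For this one cannot simply cite the unweighted theory of \cite{CC} verbatim, since that theory is stated for $\Theta \in L^p$; the point is that on any ball $B \Subset E$ the measure $\omega$ is bounded above and below by positive constants (as $\omega \in L^1_{\loc}$ and $\omega^{-1/(s-1)}\in L^1_{\loc}$ by the $\mathfrak{A}_s$ condition), so $L^{p,q}_\omega(B)$ embeds into $L^{\min(p,q)}(B)$ or at least into $L^n(B)$ by Lemma \ref{Incluaolorentzlebesgue}-type reasoning, which is enough to invoke the $W^{2,p}$-type estimates of \cite{CC} locally and obtain the a.e. bound $|D^2 u|\le C(n)\Theta(u,r)$; the global weighted conclusion is then recovered by the distribution-function comparison above, which is insensitive to localization. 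Once this technical reduction is in place, the remainder is the elementary measure-theoretic argument sketched in the second step, and the constant $C(n)$ depends only on dimension, exactly as asserted.
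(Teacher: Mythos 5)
Your two-step plan — first localize to extract the a.e.\ pointwise bound $|D^2u|\le C(n)\,\Theta(u,r)$ from the unweighted Caffarelli--Cabr\'e theory, then transfer to the weighted Lorentz norm by a superlevel-set comparison and change of variables — is the standard route, and the second step (the distribution-function transfer) is precisely right and requires no weight hypotheses at all. However, your parenthetical justification for the localization, that ``on any ball $B\Subset E$ the measure $\omega$ is bounded above and below by positive constants,'' is false: an $\mathfrak{A}_s$ weight such as $\omega(x)=|x|^a$ (for admissible $a\neq 0$) is neither bounded nor bounded away from zero near the origin, even though $\omega\in L^1_{\loc}$ and $\omega^{-1/(s-1)}\in L^1_{\loc}$. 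The correct localization is precisely the ``Lemma~\ref{Incluaolorentzlebesgue}-type'' embedding you mention as an alternative: combining the definition of $L^{p,q}_{\omega}$ with the H\"older inequality that the $\mathfrak{A}_s$ condition affords (bounding $\int_B |h|^{p/s}$ by a product of $\bigl(\int_B|h|^p\omega\bigr)^{1/s}$ and an $\omega^{-1/(s-1)}$ average) yields $\Theta(u,r)\in L^{\varrho}_{\loc}(E)$ for some $\varrho>1$ depending on $s,p,q$, which is enough to invoke the unweighted theory and identify the distributional Hessian with the a.e.\ Alexandrov Hessian; in every application of this lemma in the present paper one actually has $\omega\in\mathfrak{A}_{p/n}$ with $p>n$, so Lemma~\ref{Incluaolorentzlebesgue} applies verbatim and gives $\Theta(u,r)\in L^n_{\loc}$. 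With that repair the proof is complete and coincides with the argument for \cite[Lemma~2.2]{ZhangZhengmorrey}, which is what the paper cites in lieu of giving its own proof.
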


Finally, we will need the following standard characterization of functions in weighted Lorentz spaces whose proof goes back to standard measure theory arguments, which we will present below as a courtesy to our readers.  (cf. \cite[Lemma 3.12]{Mengesha}).
\begin{proposition}\label{P1}
Let $\omega$ an $\mathfrak{A}_{s}$ weight for some $s\in(1,\infty)$, $h: E \to \R$ be a nonnegative measurable function in a bounded domain $E\subset \mathbb{R}^{n}$. Let $\eta>0$ and $\mathrm{M} >1$ constants. Then, 
$$
h \in L^{p,q}_{\omega}(E) \Longleftrightarrow  \sum_{j=1}^{\infty} \mathrm{M}^{qj} \omega(\{x\in E; h(x)>\eta M^{j}\})^{\frac{q}{p}}\defeq S< \infty
$$
for every $p,q\in(0,\infty)$ and 
\begin{eqnarray}\label{characterization1} 
\mathrm{C}^{-1}S\le \|h\|_{L^{p,q}_{\omega}(E)}^{q} \le \mathrm{C}(\omega(E)^{\frac{q}{p}}+S),
\end{eqnarray}
with $\mathrm{C}=\mathrm{C}(q,\eta, \mathrm{M})$ is positive constant. Moreover, for $p\in(0,\infty)$ and $q=\infty$ we have
\begin{eqnarray*}
\overline{C}^{-1}T\leq \|h\|_{L^{p,\infty}_{\omega}(E)}\leq \overline{C}(\omega(E)^{\frac{1}{p}}+T),
\end{eqnarray*}
where $\overline{C}=\overline{C}(\eta, \mathrm{M})$ is positive constant and 
\begin{eqnarray}\label{characterization2}
T=\sup_{j\in\mathbb{N}}M^{j}\omega(\{x\in E; h(x)>\eta M^{j}\})^{\frac{1}{p}}.
\end{eqnarray} 
\end{proposition}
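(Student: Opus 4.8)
The plan is to prove Proposition~\ref{P1} by exploiting the ``layer-cake'' structure of the weighted Lorentz (quasi-)norm and comparing the integral defining $\|h\|_{L^{p,q}_\omega(E)}^q$ with the dyadic-type sum $S$ built from the superlevel sets $\{x\in E: h(x)>\eta\mathrm{M}^j\}$. First I would fix the notation $\mu(t)\defeq \omega(\{x\in E: h(x)>t\})$ for the weighted distribution function of $h$, noting that $\mu$ is nonincreasing and right-continuous, so that
\begin{equation*}
\|h\|_{L^{p,q}_\omega(E)}^q = q\int_0^\infty t^{q-1}\mu(t)^{q/p}\,dt.
\end{equation*}
The idea is to split the half-line $(0,\infty)$ into the blocks $(0,\eta]$ and the geometric intervals $I_j\defeq(\eta\mathrm{M}^{j-1},\eta\mathrm{M}^{j}]$ for $j\ge 1$, and on each $I_j$ use monotonicity of $\mu$ to sandwich $\mu(t)^{q/p}$ between $\mu(\eta\mathrm{M}^j)^{q/p}$ and $\mu(\eta\mathrm{M}^{j-1})^{q/p}$, while $\int_{I_j}t^{q-1}\,dt = \frac{\eta^q}{q}(\mathrm{M}^{qj}-\mathrm{M}^{q(j-1)})$ is comparable to $\mathrm{M}^{qj}$ up to a factor depending only on $q,\eta,\mathrm{M}$.

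Carrying this out, the lower bound $\mathrm{C}^{-1}S\le\|h\|^q$ comes from keeping only the intervals $I_{j+1}$ and using $\mu(t)\ge\mu(\eta\mathrm{M}^{j+1})$ for $t\in I_{j+1}$, which gives (after re-indexing) a constant multiple of $\sum_j\mathrm{M}^{qj}\mu(\eta\mathrm{M}^j)^{q/p}=S$; here one absorbs the factor $\mathrm{M}^{-q}$ and the $(1-\mathrm{M}^{-q})$ coming from the length of $I_{j+1}$ into $\mathrm{C}=\mathrm{C}(q,\eta,\mathrm{M})$. For the upper bound, on $(0,\eta]$ one uses the crude estimate $\mu(t)\le\omega(E)$ to get the term $\mathrm{C}\,\omega(E)^{q/p}$, and on each $I_j$ one uses $\mu(t)\le\mu(\eta\mathrm{M}^{j-1})$, so that $\sum_j\int_{I_j}t^{q-1}\mu(t)^{q/p}\,dt \lesssim \sum_j\mathrm{M}^{qj}\mu(\eta\mathrm{M}^{j-1})^{q/p}=\mathrm{M}^q\sum_{j\ge 0}\mathrm{M}^{qj}\mu(\eta\mathrm{M}^j)^{q/p}$, whose $j\ge 1$ part is $\mathrm{M}^qS$ and whose $j=0$ term is $\mathrm{M}^q\mu(\eta)^{q/p}\le \mathrm{M}^q\omega(E)^{q/p}$. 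Combining these yields $\|h\|^q\le\mathrm{C}(\omega(E)^{q/p}+S)$ with $\mathrm{C}$ depending only on $q,\eta,\mathrm{M}$, and in particular $h\in L^{p,q}_\omega(E)$ iff $S<\infty$. The case $q=\infty$ is analogous but simpler: since $\|h\|_{L^{p,\infty}_\omega(E)}=\sup_{t>0}t\,\mu(t)^{1/p}$, one again compares the supremum over all $t>0$ with the supremum over the discrete grid $\{\eta\mathrm{M}^j\}$, using monotonicity of $\mu$ on each $I_j$ to control $\sup_{t\in I_j}t\,\mu(t)^{1/p}$ by $\mathrm{M}\cdot\eta\mathrm{M}^{j-1}\mu(\eta\mathrm{M}^{j-1})^{1/p}$, which gives $\overline{C}^{-1}T\le\|h\|_{L^{p,\infty}_\omega(E)}\le\overline{C}(\omega(E)^{1/p}+T)$ after separately handling $t\le\eta$.

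The only mild subtlety, and the place I would be most careful, is the bookkeeping of the re-indexing and of the extra geometric factors $\mathrm{M}^{\pm q}$, $(1-\mathrm{M}^{-q})$, and $\eta^q$, making sure they are genuinely absorbed into a single constant $\mathrm{C}(q,\eta,\mathrm{M})$ independent of $h$ and $E$ — and, for the upper bound, correctly isolating the ``initial'' contribution from $(0,\eta]$ (respectively the $j=0$ term) which is what produces the additive $\omega(E)^{q/p}$ term and is unavoidable when $\omega(E)<\infty$. No property of $\omega$ beyond $\omega(E)<\infty$ (which holds since $E$ is bounded and $\omega\in\mathfrak{A}_s\subset L^1_{\loc}$) is actually used; the $\mathfrak{A}_s$ hypothesis is stated only for context. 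Thus the whole argument is a direct, elementary computation with the distribution function, and I would present it compactly, emphasizing the two sandwich inequalities on the blocks $I_j$ as the key step.
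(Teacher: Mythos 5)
Your proposal is correct and follows essentially the same route as the paper's proof: decompose $(0,\infty)$ into the geometric blocks $(\eta\mathrm{M}^{k},\eta\mathrm{M}^{k+1}]$ plus an initial interval, use monotonicity of the weighted distribution function to sandwich it by its values at the endpoints (producing $S$ up to factors of $\eta^{q}$, $\mathrm{M}^{q}$, $1-\mathrm{M}^{-q}$), bound the initial interval by $\omega(E)^{q/p}$, and argue the $q=\infty$ case by comparing the supremum over $t$ with the supremum over the grid $\{\eta\mathrm{M}^{j}\}$. Your side remark that only $\omega(E)<\infty$ (not the full $\mathfrak{A}_{s}$ condition) is used is also consistent with the paper's argument.
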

\begin{proof}
By simplicity, we use the notation  $\omega_{h}(t)=\omega(\{x\in E; h(x)>t\})$ for $t>0$. For first part, assume that $h\in L^{p,q}_{\omega}(E)$. We observe that, by function $t\longmapsto \omega_{h}(t)^{\frac{q}{p}}$ be a non-increasing and $M>1$,
\begin{eqnarray*}
\|h\|_{L^{p,q}_{\omega}(E)}^{q}&=&q\int_{0}^{\eta }t^{q-1}\omega_{h}(t)^{\frac{q}{p}}dt+q\displaystyle\int_{\dot{\bigcup}_{k=0}^{\infty}(\eta M^{k},\eta M^{k+1}]}t^{q-1}\omega_{h}(t)^{\frac{q}{p}}dt\\
&\geq&\displaystyle\sum_{k=0}^{\infty}q\int_{\eta M^{k}}^{\eta M^{k+1}}t^{q-1}\omega_{h}(t)^{\frac{q}{p}}dt\geq\displaystyle\sum_{k=0}^{\infty}q\int_{\eta M^{k}}^{\eta M^{k+1}}t^{q-1}\omega_{h}(\eta M^{k+1})^{\frac{q}{p}}dt\\
&=& \displaystyle\sum_{k=0}^{\infty}\eta^{q}M^{q(k+1)}(1-M^{-q})\omega_{h}(\eta M^{k+1})^{\frac{q}{p}}=C_{1}^{-1}S,
\end{eqnarray*}
where $C_{1}=(\eta^{q}(1-M^{-q}))^{-1}>0$. Hence, $S<\infty$. Conversely, if $S<\infty$ then analogously above it follows that
\begin{eqnarray*}
\|h\|_{L^{p,q}_{\omega}(E)}^{q}\leq C_{2}(\omega(E)^{\frac{q}{p}}+S)<\infty,
\end{eqnarray*}
for $C_{2}=(\eta M)^{q}>0$. Thus, $h\in L^{p,q}_{\omega}(E)$. The estimate in \eqref{characterization1} it follows of estimates above where $C=\max\{C_{1},C_{2}\}$.

Now, in the case $p\in(0,\infty)$ and $q=\infty$, we have for all $j\in\mathbb{N}$ that,
\begin{eqnarray*}
M^{j}\omega_{h}(\eta M^{j})^{\frac{1}{p}}\leq \frac{1}{\eta}\|h\|_{L^{p,\infty}_{\omega}(E)}
\end{eqnarray*}
consequently $\|h\|_{L^{p,\infty}_{\omega}(E)}\ge (C'_{1})^{-1} T$, for $C'_{1}=\eta^{-1}$. On the other hand, on the assumption that $M>1$, we can write $(0,\infty)=(0,\eta M]\dot{\cup}\left(\displaystyle\dot{\bigcup}_{k=1}^{\infty}(\eta M^{k},\eta M^{k+1}]\right)$. Hence, given $t>0$, we have that $t\in (0,\eta M]$ or $t\in (\eta M^{k},\eta M^{+1})$ for some $k\in\mathbb{N}$. In the first case,
\begin{eqnarray*}
t\omega_{h}(t)^{\frac{1}{p}}\leq (\eta M)\omega(E)^{\frac{1}{p}}
\end{eqnarray*}
and in other case,
\begin{eqnarray*}
t\omega_{h}(t)^{\frac{1}{p}}\leq (\eta M^{k+1})\omega_{h}(\eta M^{k})^{\frac{1}{p}}.
\end{eqnarray*}
Thus, $t\omega_{h}(t)^{\frac{1}{p}}\leq C'_{2}(\omega(E)^{\frac{1}{p}}+T)$, for $C'_{2}=\max\{M,\eta M\}>0$, which implies in estimate  $$\|h\|_{L^{p,\infty}_{\omega}(E)}\leq C'_{2}(\omega(E)^{\frac{1}{p}}+T).$$ Finally, taking $\overline{C}=\max\{C'_{1},C'_{2}\}$ it follows  \eqref{characterization2}. 
\end{proof}

In this part we will present a key tool that provides a compactness method that guarantees the possibility of weakening the convexity hypothesis on the ruling operator $F$ throughout the work. In fact, the following result tells us that if our equation is close enough to the homogeneous equation with constant coefficients, then our solution will be close enough to a solution of the homogeneous equation with frozen coefficients. At the heart of our techniques is the notion of the recession operator (see Definition \ref{DefR}). More precisely, we have the following lemma, the proof of which can be found in \cite[Lemma 2.12]{Bessa},


\begin{lemma}[{\bf Approximation}] \label{Approx}
Let $n \le p < \infty$, $0 \le \nu \le 1$ and assume that $(E1),(E2),(E3)$ and $(E5)$ are in force. Then, for every $\delta >0$, $\varphi \in C(\partial \mathrm{B}_1(0^{\prime},\nu))$ with $\|\varphi\|_{L^{\infty}(\partial \mathrm{B}_1(0^{\prime},\nu))} \le \mathfrak{c}_1$ and $g \in C^{1,\alpha}(\overline{\mathrm{T}}_2)$ with $0 < \alpha < 1$  and $\|g\|_{C^{1,\alpha}(\overline{\mathrm{T}}_2)} \le \mathfrak{c}_2$ for some $\mathfrak{c}_2 >0$ there exist positive constants $\epsilon =\epsilon(\delta,n, \mu_0, p, \lambda, \Lambda, \gamma,\mathfrak{c}_1, \mathfrak{c}_2) < 1$ and $\tau_0 = \tau_0(\delta, n, \lambda,\Lambda, \mu_0, \mathfrak{c}_1, \mathfrak{c}_2) >0$ such that, if
$$
\max\left\{ |F_{\tau}(\mathrm{X},x) - F^{\sharp}(\mathrm{X},x)|, \, \|\psi_{F^{\sharp}}(\cdot,0)\Vert_{L^{p}(\mathrm{B}^{+}_{r})},\,\|f\|_{L^{p}(\mathrm{B}^+_{r})}  \right\} \le \epsilon \quad \textrm{and} \quad \tau \le \tau_0
$$
then, any two $L^p$-viscosity solutions $u$ (normalized, i.e., $\|u\|_{L^{\infty}(\mathrm{B}^{+}_r(0^{\prime},\nu))}\le 1$) and $\mathfrak{h}$ of
$$
\left\{
\begin{array}{rclcl}
F_{\tau}(D^2u,x) &=& f(x)& \mbox{in} & \mathrm{B}^{+}_r(0^{\prime},\nu) \cap \mathbb{R}^{n}_+ \\
\mathfrak{B}(Du,u,x)&=& g & \mbox{on} &  \mathrm{B}_r(0^{\prime},\nu) \cap \mathrm{T}_r\\
u(x) &=& \varphi(x) &\mbox{on}& \overline{\partial \mathrm{B}_r(0^{\prime},\nu) \cap \mathbb{R}^n_+}
\end{array}
\right.
$$
and
$$
\left\{
\begin{array}{rclcl}
F^{\sharp}(D^2 \mathfrak{h},0) &=& 0& \mbox{in} & \mathrm{B}^{+}_{\frac{7}{8}r}(0^{\prime},\nu) \cap \mathbb{R}^n_+ \\
\mathfrak{B}(D\mathfrak{h},\mathfrak{h},x) &=& g & \mbox{on} &  \mathrm{B}_{\frac{7}{8} r}(0^{\prime},\nu) \cap \mathrm{T}_r\\
\mathfrak{h}(x) &=& u(x) &\mbox{on}& \overline{\partial \mathrm{B}_{\frac{7}{8}r}(0^{\prime}, \nu) \cap \mathbb{R}^n_+}		
\end{array}
\right.
$$
satisfy
$$
\|u-\mathfrak{h}\|_{L^{\infty}(\mathrm{B}^{+}_{\frac{7}{8}r}(0^{\prime},\nu))} \le \delta.
$$
\end{lemma}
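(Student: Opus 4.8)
The statement is a compactness-via-contradiction result in the spirit of Caffarelli's perturbation technique, adapted to the oblique boundary setting and the recession operator $F^{\sharp}$. The plan is to argue by contradiction: suppose the conclusion fails, which produces a $\delta_0>0$ and sequences $\tau_j \to 0$, $\epsilon_j \to 0$, operators $F^j$ satisfying $(E1)$--$(E3)$, source terms $f_j$, boundary data $g_j$, and boundary values $\varphi_j$, together with normalized $L^p$-viscosity solutions $u_j$ of the first problem and $\mathfrak{h}_j$ of the second, such that
$$
\max\left\{ \|F^j_{\tau_j}(\cdot,x) - F^{\sharp}(\cdot,x)\|, \, \|\psi_{F^{\sharp}}(\cdot,0)\|_{L^p(\mathrm{B}^+_r)}, \, \|f_j\|_{L^p(\mathrm{B}^+_r)} \right\} \le \epsilon_j, \quad \tau_j \le \tau_j,
$$
but $\|u_j - \mathfrak{h}_j\|_{L^{\infty}(\mathrm{B}^+_{\frac78 r}(0',\nu))} > \delta_0$ for all $j$.

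**Key steps.** First I would establish uniform interior and up-to-the-boundary Hölder and Lipschitz estimates for $\{u_j\}$: since $u_j \in \mathcal{S}^{\star}(\lambda,\Lambda,\sigma,|f_j|)$ in the interior with $\|f_j\|_{L^p} \le \epsilon_j \le 1$ and $p \ge n$, the Krylov--Safonov / Caffarelli--Cabré interior estimates give local equicontinuity, while the oblique boundary regularity theory (uniformly in $j$, using $\beta \cdot \mathbf{n} \ge \mu_0$, $\gamma \le 0$, and the $C^{1,\alpha}$ bounds on $\beta,\gamma,g_j$, invoking the ABP bound of Lemma~\ref{ABP-fullversion} for the $L^{\infty}$ control) yields equicontinuity up to $\mathrm{T}_r$. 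By Arzelà--Ascoli and a diagonal argument, pass to a subsequence with $u_j \to u_{\infty}$ locally uniformly in $\overline{\mathrm{B}^+_r(0',\nu)}$, and correspondingly $\varphi_j \to \varphi_{\infty}$, $g_j \to g_{\infty}$. Second, I would identify the limit equation: by the hypotheses $f_j \to 0$ in $L^p$, $\tau_j \to 0$, and $F^j_{\tau_j} \to F^{\sharp}$ uniformly on compact sets of $\mathrm{Sym}(n)$, the Stability Lemma~\ref{Est} (checking the convergence condition \eqref{Est1} for $W^{2,p}$ test functions, which reduces to the uniform operator convergence plus $\|f_j\|_{L^p}\to 0$) shows $u_{\infty}$ is an $L^p$-viscosity solution of $F^{\sharp}(D^2 u_{\infty},0) = 0$ in $\mathrm{B}^+_{\frac78 r}(0',\nu) \cap \mathbb{R}^n_+$ with the oblique condition $\mathfrak{B}(Du_{\infty},u_{\infty},x) = g_{\infty}$ on the flat part and $u_{\infty} = \varphi_{\infty}$ on the spherical part. (Here I would also note that the oscillation term $\Psi_{F^{\sharp}}(x,0)$ having small $L^p$-average lets us freeze coefficients at $0$ in the limit.) Third, I would run the same stability argument for $\{\mathfrak{h}_j\}$: these solve $F^{\sharp}(D^2\mathfrak{h}_j,0)=0$ with boundary data $u_j \to u_{\infty}$ and $g_j \to g_{\infty}$, so by $(E5)$ they enjoy uniform $C^{1,1}$ estimates (hence equicontinuity), and their limit $\mathfrak{h}_{\infty}$ solves the identical boundary value problem as $u_{\infty}$. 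Fourth, uniqueness of $L^p$-viscosity solutions to the linear-in-the-sense-of-comparison oblique problem for $F^{\sharp}$ — available via the comparison principle underlying Lemma~\ref{ABP-fullversion}, since $\gamma \le 0$ and the Shapiro--Lopatinskii condition $\beta\cdot\varsigma \ge \mu_0$ holds — forces $u_{\infty} = \mathfrak{h}_{\infty}$ on $\mathrm{B}^+_{\frac78 r}(0',\nu)$. But then $\|u_j - \mathfrak{h}_j\|_{L^{\infty}} \le \|u_j - u_{\infty}\|_{L^{\infty}} + \|u_{\infty} - \mathfrak{h}_{\infty}\|_{L^{\infty}} + \|\mathfrak{h}_{\infty} - \mathfrak{h}_j\|_{L^{\infty}} \to 0$, contradicting $\|u_j-\mathfrak{h}_j\|_{L^{\infty}} > \delta_0$.

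**Main obstacle.** The delicate point is the uniform up-to-the-boundary equicontinuity of $\{u_j\}$ near the flat part $\mathrm{T}_r$ and the verification that the limit retains the oblique boundary condition in the viscosity sense — one must control boundary behaviour uniformly in $j$ despite the operators $F^j$ only converging to $F^{\sharp}$ (not equalling it) and the coefficients oscillating. This requires a uniform boundary Hölder estimate for oblique problems (of Krylov type, robust under $L^p$-small right-hand sides and $C^{1,\alpha}$-bounded, $\mu_0$-oblique data), and then a careful passage to the limit in the boundary condition, checking that test functions touching $u_{\infty}$ at a boundary point can be perturbed to touch $u_j$ at nearby points so that the viscosity inequalities survive. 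A secondary technical care is that the domains $\mathrm{B}^+_r(0',\nu)$ degenerate to a half-ball or remain an interior-type region depending on $\nu$; the estimates must be uniform across $0 \le \nu \le 1$, which is handled by treating $\nu$ close to $1$ via interior theory and $\nu$ small via the boundary theory, with overlap. Everything else — the Arzelà--Ascoli extraction, the application of Lemma~\ref{Est}, and the final contradiction — is routine once these two points are secured.
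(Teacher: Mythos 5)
Your proposal is correct and follows essentially the same route as the proof the paper relies on (the paper does not reproduce it, citing \cite[Lemma 2.12]{Bessa}): a contradiction/compactness argument with uniform interior and up-to-the-boundary H\"older estimates for the oblique problem, ABP control, Arzel\`a--Ascoli, the stability lemma to identify the common limit equation $F^{\sharp}(D^2\cdot,0)=0$ with the same oblique and Dirichlet data, and uniqueness via comparison to force $u_{\infty}=\mathfrak{h}_{\infty}$. The delicate points you flag (uniform boundary equicontinuity via Krylov/Li--Zhang type estimates and passing the oblique condition to the limit, with the operators $F^j_{\tau_j}$ and $F^{j,\sharp}$ converging to one and the same limit operator) are exactly the ones handled in the cited proof.
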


With these approximation lemma we obtain the following lemma (cf. \cite[Lemma 3.6]{Bessa}):

\begin{lemma} \label{propdecaimento}
Given $\epsilon_0  \in (0, 1)$ and let $u$ be a normalized viscosity solution to
$$
\left\{
\begin{array}{rclcl}
F_{\tau}(D^2u,x) &=& f(x) & \mbox{in} & \mathrm{B}^+_{14\sqrt{n}},\\
\mathfrak{B}(Du,u,x)&=& g & \mbox{on} & \mathrm{T}_{14 \sqrt{n}}.
\end{array}
\right.
$$
Assume that (E1)-(E4) hold and extend $f$ by zero outside $\mathrm{B}^+_{14\sqrt{n}}$. For $x \in \mathrm{B}_{14 \sqrt{n}}$, let
	$$
	\max \left\{\tau, \|f\|_{L^n(\mathrm{B}_{14 \sqrt{n}})} \right\} \le \tilde{\epsilon}
	$$
	for some $\tilde{\epsilon} >0$ depending only $n, \epsilon_0, \lambda, \Lambda, \mu_0$ and $\alpha$. Then, for $k \in \mathbb{N}\setminus \{0\}$ we define
	\begin{eqnarray*}
		\mathcal{A} & \defeq & \mathcal{A}_{\mathrm{M}^{k+1}}(u, \mathrm{B}^+_{14 \sqrt{n}}) \cap \left(\mathcal{Q}^{n-1}_1 \times (0,1)\right)\\
		\mathcal{B} &\defeq & \left(\mathcal{A}_{\mathrm{M}^k}(u, \mathrm{B}^+_{14\sqrt{n}}) \cap \left(\mathcal{Q}^{n-1}_1 \times (0,1)\right)\right)\cup \left\{x \in \mathcal{Q}^{n-1}_1 \times (0,1); \mathcal{M}(f^n) \ge (\mathrm{C}_0\mathrm{M}^k)^n \right\},
	\end{eqnarray*}
where $\mathrm{M} = \mathrm{M}(n, \mathrm{C}_0)>1$. Then,
$$
|\mathcal{A}| \le \epsilon_0|\mathcal{B}|.
	$$
\end{lemma}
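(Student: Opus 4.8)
\textbf{Proof plan for Lemma \ref{propdecaimento}.} The statement is a Calder\'on–Zygmund-type density decay estimate, and the natural strategy is a localized version of the classical argument comparing the ``bad'' set where $u$ fails to be touched by paraboloids of opening $\mathrm{M}^{k+1}$ with the ``bad'' set at the coarser scale $\mathrm{M}^k$ together with the maximal-function superlevel set of $f^n$. The plan is to verify the hypotheses of the Calder\'on–Zygmund cube decomposition lemma (the dyadic ``$|A|\le\epsilon_0|B|$'' lemma of Caffarelli–Cabr\'e, in its half-ball/boundary form) on the reference configuration $\mathcal{Q}^{n-1}_1\times(0,1)$. Concretely, it suffices to check the two structural conditions: (i) $|\mathcal{A}_{\mathrm{M}^{k+1}}(u,\mathrm{B}^+_{14\sqrt n})\cap(\mathcal{Q}^{n-1}_1\times(0,1))|\le\epsilon_0|\mathcal{Q}^{n-1}_1\times(0,1)|$, i.e.\ the top-scale smallness, and (ii) the dyadic propagation property: if $\mathcal{Q}$ is a dyadic subcube of $\mathcal{Q}^{n-1}_1\times(0,1)$ with $|\mathcal{A}\cap\mathcal{Q}|>\epsilon_0|\mathcal{Q}|$, then the predecessor $\widetilde{\mathcal{Q}}$ of $\mathcal{Q}$ is entirely contained in $\mathcal{B}$. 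Granting (i) and (ii), the abstract covering lemma yields $|\mathcal{A}|\le\epsilon_0|\mathcal{B}|$ directly.

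For step (i), the point is to produce, at scale $1$, a single paraboloid of universal opening touching $u$ from above and below on $\mathrm{B}^+_{14\sqrt n}$ off a set of controlled measure; this is exactly where the approximation machinery enters. First I would invoke Lemma \ref{Approx}: choosing $\delta$ small (depending on $\epsilon_0$) and the corresponding $\epsilon,\tau_0$, the smallness hypotheses $\tau\le\tilde\epsilon$ and $\|f\|_{L^n}\le\tilde\epsilon$ (after, if necessary, shrinking $\tilde\epsilon$ below $\epsilon$ and using $\|f\|_{L^p(\mathrm{B}^+_r)}\lesssim\|f\|_{L^n}$ on the fixed reference domain via H\"older, or rather invoking the $L^n$-form directly as the lemma is stated) give an $\mathfrak{h}$ solving the homogeneous recession problem with $\|u-\mathfrak{h}\|_{L^\infty}\le\delta$. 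By the a priori estimates (E4) in the interior and (E5) up to the flat boundary $\mathrm{T}$, $\mathfrak{h}\in C^{1,1}$ with $\|\mathfrak{h}\|_{C^{1,1}}\le\mathrm{C}(\mathfrak{c}_1,\mathfrak{c}_2,\|g\|_{C^{1,\alpha}})$, hence $\mathfrak{h}$ is touched at every point by paraboloids of a fixed universal opening $\mathrm{N}$. The $C^{1,1}$ norm of $\mathfrak{h}$ controls a universal opening, and the $\delta$-closeness of $u$ to $\mathfrak{h}$ transfers this — via the standard argument (e.g.\ Caffarelli–Cabr\'e, Lemma 7.x, or the ABP-based argument using Lemma \ref{ABP-fullversion} for the oblique setting) — into the estimate $|\mathcal{A}_{\mathrm{M}}(u,\mathrm{B}^+_{14\sqrt n})\cap(\mathcal{Q}^{n-1}_1\times(0,1))|\le\epsilon_0|\mathcal{Q}^{n-1}_1\times(0,1)|$ for a suitable universal $\mathrm{M}>\mathrm{N}$ and the $\tilde\epsilon$ small enough. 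Since $\mathcal{A}_{\mathrm{M}^{k+1}}\subset\mathcal{A}_{\mathrm{M}}$ for $k\ge 0$ (larger opening is easier to touch), (i) follows.

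For step (ii), the propagation property, one argues by contrapositive: suppose $\widetilde{\mathcal{Q}}\not\subset\mathcal{B}$, so there is a point $x_1\in\widetilde{\mathcal{Q}}$ with $x_1\notin\mathcal{A}_{\mathrm{M}^k}(u,\mathrm{B}^+_{14\sqrt n})$ and $\mathcal{M}(f^n)(x_1)<(\mathrm{C}_0\mathrm{M}^k)^n$. The existence of such $x_1$ means $u$ is touched on all of $\mathrm{B}^+_{14\sqrt n}$ by a paraboloid of opening $\mathrm{M}^k$ at $x_1$, and the Lebesgue-point/maximal control of $f$ on the cube $\widetilde{\mathcal{Q}}$. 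Rescaling $u$ on $\widetilde{\mathcal{Q}}$ (dilating by roughly $2^{-j}$ to bring $\widetilde{\mathcal{Q}}$ back to unit size, normalizing by $\mathrm{M}^k$) produces a normalized solution $v$ of an equation of the same structural type — here one uses that the class (E1) and the recession operator are stable under the scaling $F\mapsto\tau F(\tau^{-1}\cdot)$ combined with the dilation, and the oblique boundary operator is affine hence rescales correctly — with right-hand side of $L^n$ norm $\le\mathrm{C}_0'$ (by the maximal bound at $x_1$) and $\tau$-parameter still $\le\tilde\epsilon$. Applying step (i) to $v$ then gives a paraboloid of universal opening $\mathrm{M}$ touching $v$ off a small set, which scales back to a paraboloid of opening $\mathrm{M}\cdot\mathrm{M}^k=\mathrm{M}^{k+1}$ touching $u$ off a small fraction of $\widetilde{\mathcal{Q}}$; hence $|\mathcal{A}_{\mathrm{M}^{k+1}}(u,\cdot)\cap\mathcal{Q}|\le|\mathcal{A}_{\mathrm{M}^{k+1}}(u,\cdot)\cap\widetilde{\mathcal{Q}}|\le\epsilon_0|\widetilde{\mathcal{Q}}|\le\epsilon_0 2^n|\mathcal{Q}|$, and after adjusting constants this contradicts $|\mathcal{A}\cap\mathcal{Q}|>\epsilon_0|\mathcal{Q}|$. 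The choice $\mathrm{M}=\mathrm{M}(n,\mathrm{C}_0)$ is dictated precisely by this scaling bookkeeping.

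The main obstacle is step (ii): making the rescaling rigorous while keeping the boundary configuration admissible. One must check that when $\widetilde{\mathcal{Q}}$ is a boundary cube (touching $\mathrm{T}$) the dilation keeps the flat-boundary/oblique structure, that the vector field $\beta$ and coefficient $\gamma$ — only a priori $C^{1,\alpha}$ — rescale in a way compatible with the hypotheses of Lemma \ref{Approx} (in particular that $\gamma\le 0$ and $\beta\cdot\mathbf{n}\ge\mu_0$ persist, which they do since these are pointwise and dilation-invariant), and that the interior cubes (not touching $\mathrm{T}$) are handled by the purely interior decay (E4) via the analogous interior Caffarelli lemma, so the two cases must be merged. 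The ABP estimate (Lemma \ref{ABP-fullversion}) is the quantitative input that controls $\|u-\mathfrak{h}\|_{L^\infty}$ and the measure of the touching set in terms of $\|f\|_{L^n}$; marshalling it uniformly across scales and across interior/boundary cubes is the delicate part, but it is exactly the content carried over from \cite{Bessa}, so I would follow that template and cite \cite[Lemma 3.6]{Bessa} for the routine verifications.
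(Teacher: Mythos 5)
Your plan is correct and follows essentially the same route as the paper, which does not reprove this lemma but imports it from \cite[Lemma 3.6]{Bessa}: there, exactly as you outline, the first-step decay comes from the approximation Lemma \ref{Approx} together with the $C^{1,1}$ bounds in (E4)--(E5) and the ABP estimate, and the inductive step is the Calder\'on--Zygmund cube decomposition with the rescaling/maximal-function argument you describe. The only points to tighten are routine: absorb the factor $2^n$ arising from passing to the predecessor cube into the choice of the smallness parameter in the first-step decay, and normalize the rescaled solution using the quadratic growth supplied by the paraboloid touching $u$ at $x_1$.
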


From the strong doubling in Lemma \ref{propriedadesdospesos}, we obtain a decay of the measure of the sets $A_{k}$ of Lemma \ref{propdecaimento} with respect to the measure $\omega$. This fact is summarized by the following proposition:

\begin{corollary}\label{corolariodecaimento}
Under the same conditions of Lemma \ref{propdecaimento} and assuming  that $\omega\in \mathfrak{A}_{p}$ for some $1<p<\infty$, fix $\epsilon_{0}\in(0,1)$. Now, for $k \geq 0$ we define
\begin{eqnarray*}
A^{k}\defeq  \mathcal{A}_{\mathrm{M}^k}(u, \mathrm{B}^+_{14\sqrt{n}}) \cap \left(\mathcal{Q}^{n-1}_1 \times (0,1)\right) \  \mbox{and} \ 
B^{k} \defeq  \left\{ x \in \left(\mathcal{Q}^{n-1}_1 \times (0,1)\right); \mathcal{M}(f^n)(x) \ge (\mathrm{C}_0\mathrm{M}^k)^n\right\},
\end{eqnarray*}
where $\mathrm{C}_{0}$ and $\mathrm{M}$ are the constants in Lemma \ref{propdecaimento}. Then, for any $k\geq 0$,
\begin{eqnarray*}
\omega\left(A^{k}\right)\leq \epsilon_{0}^{k}\omega(A^{0})+\sum_{i=1}^{k-1}\epsilon_{0}^{k-i}\omega(B^{i}).
\end{eqnarray*}
\end{corollary}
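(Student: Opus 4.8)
The plan is to combine the measure-theoretic decay estimate from Lemma \ref{propdecaimento} with the strong doubling property of Muckenhoupt weights in Lemma \ref{propriedadesdospesos}(c), and then iterate. First I would recall that Lemma \ref{propdecaimento} gives, with $\mathcal{A} = A^{k+1}$ and $\mathcal{B} = A^k \cup B^k$ (up to the harmless identification of the sets defined there with the $A^k, B^k$ here), the Lebesgue-measure bound $|A^{k+1}| \le \epsilon_0 |A^k \cup B^k| \le \epsilon_0\big(|A^k| + |B^k|\big)$. The goal is to upgrade this to a bound for $\omega(A^{k+1})$. Since $A^{k+1} \subset A^k \cup B^k \subset \mathcal{Q}^{n-1}_1 \times (0,1)$, all the relevant sets live inside the fixed cube $\Omega_\ast \defeq \mathcal{Q}^{n-1}_1 \times (0,1)$, so I can apply the upper strong-doubling inequality
$$
\frac{\omega(E)}{\omega(\Omega_\ast)} \le c_1 \left( \frac{|E|}{|\Omega_\ast|} \right)^{\theta}
$$
with $E = A^{k+1}$, and combine it with the lower strong-doubling inequality applied to the denominator sets. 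The cleaner route, which avoids losing the exponent $\theta$ through iteration, is to first choose $\epsilon_0$ small at the level of Lemma \ref{propdecaimento} and then translate: by Lemma \ref{propriedadesdospesos}(c) there are constants $c_1 > 0$ and $\theta \in (0,1)$ depending only on $n, p, [\omega]_p$ such that for every measurable $E \subset \Omega_\ast$,
$$
\frac{1}{[\omega]_p}\left(\frac{|E|}{|\Omega_\ast|}\right)^{p} \le \frac{\omega(E)}{\omega(\Omega_\ast)} \le c_1 \left(\frac{|E|}{|\Omega_\ast|}\right)^{\theta}.
$$
Applying the right inequality to $A^{k+1}$, then $|A^{k+1}| \le \epsilon_0(|A^k| + |B^k|)$, and then bounding $|A^k|, |B^k|$ from above in terms of $\omega(A^k), \omega(B^k)$ via the left inequality (rearranged), one gets
$$
\omega(A^{k+1}) \le \tilde c \, \epsilon_0^{\theta} \left( \omega(A^k)^{\theta/p} + \omega(B^k)^{\theta/p}\right) \omega(\Omega_\ast)^{1-\theta/p},
$$
which is not quite the clean linear recursion claimed. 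So the better strategy — the one I expect the authors intend — is to run the doubling estimate \emph{inside} the proof of the measure decay, i.e.\ to re-derive a weighted analogue of Lemma \ref{propdecaimento} directly: the covering/Vitali argument producing $|\mathcal{A}| \le \epsilon_0 |\mathcal{B}|$ is actually a cube-decomposition estimate of Calderón–Zygmund type, and the strong doubling property lets one replace each local ratio $|A \cap Q|/|Q|$ by $\omega(A\cap Q)/\omega(Q)$ at the cost of adjusting $\epsilon_0$, yielding directly $\omega(A^{k+1}) \le \epsilon_0 \big(\omega(A^k) + \omega(B^k)\big)$.

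Granting this weighted one-step estimate $\omega(A^{k+1}) \le \epsilon_0\big(\omega(A^k) + \omega(B^k)\big)$, the conclusion follows by a routine finite induction on $k$. For $k=1$ it is immediate. Assuming the claimed formula $\omega(A^k) \le \epsilon_0^k \omega(A^0) + \sum_{i=1}^{k-1}\epsilon_0^{k-i}\omega(B^i)$ holds for some $k \ge 1$, I would write
$$
\omega(A^{k+1}) \le \epsilon_0\big(\omega(A^k) + \omega(B^k)\big) \le \epsilon_0\left(\epsilon_0^k \omega(A^0) + \sum_{i=1}^{k-1}\epsilon_0^{k-i}\omega(B^i)\right) + \epsilon_0 \omega(B^k),
$$
and distributing the leading $\epsilon_0$ gives $\epsilon_0^{k+1}\omega(A^0) + \sum_{i=1}^{k-1}\epsilon_0^{k+1-i}\omega(B^i) + \epsilon_0^{1}\omega(B^k) = \epsilon_0^{k+1}\omega(A^0) + \sum_{i=1}^{k}\epsilon_0^{k+1-i}\omega(B^i)$, which is exactly the formula at level $k+1$. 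This closes the induction.

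The main obstacle, and the step deserving the most care in writing, is the passage from the Lebesgue-measure decay in Lemma \ref{propdecaimento} to the weighted decay with the \emph{same} recursive structure. One cannot simply plug $|\mathcal{A}| \le \epsilon_0|\mathcal{B}|$ into the one-sided doubling inequality and hope to iterate, because the exponent $\theta/p < 1$ would degrade at every step. The correct mechanism is that $\epsilon_0$ in Lemma \ref{propdecaimento} is \emph{arbitrarily small} (it is a free parameter there), so one proves the local measure decay at a sufficiently small Lebesgue-level $\epsilon_0'$, and then the strong doubling inequality (quantitatively, monotonicity of $t \mapsto c_1 t^\theta$ and $t \mapsto [\omega]_p^{-1} t^p$ on $[0,1]$) converts "$|\mathcal{A}|/|Q|$ small implies $\omega(\mathcal{A})/\omega(Q)$ small" on each Vitali cube $Q$, after which summing over the cubes of the covering produces $\omega(A^{k+1}) \le \epsilon_0 \, \omega(A^k \cup B^k)$ with the originally targeted $\epsilon_0$. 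Subadditivity $\omega(A^k \cup B^k) \le \omega(A^k) + \omega(B^k)$ then yields the stated one-step inequality, and the finite induction above finishes. One should also note for completeness that $A^0 = \mathcal{A}_{\mathrm{M}^0}(u,\mathrm{B}^+_{14\sqrt n}) \cap \Omega_\ast$ and all $A^k, B^k$ are measurable subsets of the bounded set $\Omega_\ast$, so all the $\omega$-measures appearing are finite since $\omega \in L^1_{\mathrm{loc}}$, making every manipulation above legitimate.
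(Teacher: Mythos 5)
Your overall strategy is right and your final iteration coincides with the paper's, but on the crucial conversion step you take a different --- and heavier --- route, and you miss the simpler device the paper actually uses. The paper does not rerun the covering argument with the weight: it fixes $\epsilon_0$, invokes Lemma \ref{propdecaimento} at the recalibrated level $\tilde\epsilon_0=(\epsilon_0/c_1)^{1/\theta}$ (exploiting precisely the freedom you point out, that the smallness parameter there is arbitrary), and then applies the strong doubling inequality of Lemma \ref{propriedadesdospesos}(c) once, with the ambient set taken to be $A^k\cup B^k$ itself rather than the cube $\mathcal{Q}^{n-1}_1\times(0,1)$: since $A^{k+1}\subset A^k\subset A^k\cup B^k$, this yields $\omega(A^{k+1})\le c_1\big(|A^{k+1}|/|A^k\cup B^k|\big)^{\theta}\,\omega(A^k\cup B^k)\le c_1\tilde\epsilon_0^{\,\theta}\,\omega(A^k\cup B^k)=\epsilon_0\,\omega(A^k\cup B^k)$, after which subadditivity and iteration finish. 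The exponent $\theta$ is thus absorbed once into the choice of $\tilde\epsilon_0$ and never degrades along the iteration; your claim that one ``cannot simply plug'' the Lebesgue decay into the doubling inequality is accurate only for the version you tried, with the fixed cube as ambient set.

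Your substitute --- converting cube by cube inside the Calder\'on--Zygmund/Vitali covering underlying Lemma \ref{propdecaimento} --- is indeed the standard mechanism in the weighted literature (it is how the weighted covering lemmas in \cite{Mengesha} and \cite{ZhangZhenglorentz} are obtained) and it does produce the same one-step inequality, so your plan is viable; but as written it is only asserted, and carrying it out would require reopening the proof of Lemma \ref{propdecaimento}, which is imported from \cite{Bessa} and not reproduced here, so the core inequality $\omega(A^{k+1})\le\epsilon_0\,\omega(A^k\cup B^k)$ remains unproved in your text. That said, your caution has a kernel of truth: the paper's shortcut applies Lemma \ref{propriedadesdospesos}(c) with the arbitrary measurable set $A^k\cup B^k$ in the role of the ambient set, whereas strong doubling is normally stated (and is only straightforward) for balls or cubes; your cube-by-cube route sidesteps that delicate point. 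Finally, a small indexing remark affecting both you and the statement: the one-step bound at $k=0$ reads $\omega(A^1)\le\epsilon_0\big(\omega(A^0)+\omega(B^0)\big)$, so the iteration really produces the additional term $\epsilon_0^{k}\omega(B^0)$ (equivalently, the sum should start at $i=0$); your ``for $k=1$ it is immediate'' glosses over this, though the discrepancy is harmless in the way the corollary is used later.
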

\begin{proof}
In effect, fix $\epsilon_{0}\in(0,1)$. By the Lemma \ref{propdecaimento} to the constant $\tilde{\epsilon}_{0}=\left(\frac{\epsilon_{0}}{c_{1}}\right)^{\frac{ 1}{\theta}}$, where $\theta$ and $k_{1}$ are the constants of Lemma \ref{propriedadesdospesos}, we obtain the following estimate
\begin{eqnarray*}
|A^{k+1}|\leq \tilde{\epsilon_{0}}|A^{k}\cup B^{k}|
\end{eqnarray*}
and consequently by strong doubling (item (c) of Lemma \ref{propriedadesdospesos}) we obtain that
\begin{eqnarray*}
\omega(A^{k+1})\leq c_{1}\left( \frac{|A^{k+1}|}{|A^{k}\cup B^{k}|}\right)^{\theta}\omega(A^{k}\cup B^{k})= \epsilon_{0}\omega(A^{k}\cup B^{k})\le \epsilon_{0}\omega(A^{k})+\epsilon_{0}\omega(B^{k}), \forall k\geq 0.
\end{eqnarray*}
By iterating over these estimates, it follows the desired.
\end{proof}

\section{ Proof of main theorem } \label{section3}

\hspace{0.4cm}For the proof of the Main Theorem, we will need to study the regularity of solutions to the problem
\begin{equation} \label{equation1}
\left\{
\begin{array}{rclcl}
F(D^2u,Du,u,x) &=& f(x) & \mbox{in} & \mathrm{B}^+_1,\\
\mathfrak{B}(Du,u,x)&=& g & \mbox{on} & \mathrm{T}_1.
\end{array}
\right.
\end{equation}

Initially we have for $C^{0}$-viscosity solutions of \eqref{equation1} without the dependence of $Du$ and $u$ the following result:

\begin{proposition}\label{T-flat}
Consider $(p,q)\in (n,\infty)\times(0,\infty]$, $f \in L^{p,q}_{\omega}(\mathrm{B}^+_1)\cap C^{0}(\mathrm{B}^{+}_{1})$ and $\omega$ in $\mathfrak{A}_{\frac{p}{n}}$ be an weight. Let $u$ be a normalized $C^{0}$-viscosity solution of
\begin{equation*} \label{mens}
\left\{
\begin{array}{rclcl}
F(D^2u, x) &=& f(x) & \mbox{in} & \mathrm{B}^+_1,\\
\mathfrak{B}(Du,u,x)&=& g & \mbox{on} & \mathrm{T}_1.
\end{array}
\right.
\end{equation*}
 Assume that assumptions (E1)-(E5) are in force. Then $D^{2}u \in L^{p,q}_{\omega}\left(\mathrm{B}^+_{\frac{1}{2}}\right)$ and
$$
\|D^{2}u\|_{L^{p,q}_{\omega}\left(\mathrm{B}^+_{\frac{1}{2}}\right)} \le \mathrm{C} \cdot \left( \|u\|_{L^{\infty}(\mathrm{B}^+_1)} + \|f\|_{L^{p,q}_{\omega}(\mathrm{B}^+_1)}+\Vert g\Vert_{C^{1,\alpha}(\overline{\mathrm{T}_{1}})}\right),
$$
where $\mathrm{C}=\mathrm{C}(n,\lambda,\Lambda,p,q,[\omega]_{\frac{p}{n}}, \Vert \beta\Vert_{C^{1,\alpha}(\mathrm{T}_{1})}, \Vert \gamma\Vert_{C^{1,\alpha}(\mathrm{T}_{1})}, \alpha,r_{0},\theta_0,\mu_{0})>0$.
\end{proposition}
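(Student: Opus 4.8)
\textbf{Proof strategy for Proposition \ref{T-flat}.}

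The plan is to follow the standard tangential-analysis (compactness) scheme in its weighted-Lorentz incarnation, reducing everything to the measure-decay estimate already isolated in Corollary \ref{corolariodecaimento} and then converting that decay into membership in $L^{p,q}_{\omega}$ via the characterization in Proposition \ref{P1} and Lemma \ref{caracterizationofhessian}. First I would reduce to the normalized configuration on the enlarged half-ball $\mathrm{B}^+_{14\sqrt{n}}$: using Lemma \ref{Incluaolorentzlebesgue} the datum $f \in L^{p,q}_{\omega}(\mathrm{B}^+_1)$ lies in $L^{\tilde p}$ for $\tilde p \in [n,p)$, so by the ABP estimate (Lemma \ref{ABP-fullversion}) and a rescaling of the equation (replacing $F$ by a dilated operator $F_\tau$, which by (E1) is again $(\lambda,\Lambda,\sigma,\xi)$-elliptic with the same ellipticity, and absorbing the $C^{1,\alpha}$-norm of $g$) we may assume $\|u\|_{L^\infty(\mathrm{B}^+_{14\sqrt n})}\le 1$, $\|f\|_{L^n}\le\tilde\epsilon$, and $\tau\le\tilde\epsilon$ for the threshold $\tilde\epsilon$ furnished by Lemma \ref{propdecaimento}. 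The smallness of $\|f\|_{L^n}$ is arranged at the cost of the multiplicative constant $(\|u\|_{L^\infty}+\|f\|_{L^{p,q}_\omega}+\|g\|_{C^{1,\alpha}})$ appearing in the final estimate, which is why that combination shows up on the right-hand side.

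Next I would invoke Corollary \ref{corolariodecaimento}: with $\omega\in\mathfrak A_{p/n}$ (here $p/n>1$ since $p>n$, so $\omega$ is a legitimate Muckenhoupt weight) and a fixed $\epsilon_0\in(0,1)$ to be chosen, one gets, for every $k\ge 0$,
$$
\omega(A^k)\le \epsilon_0^{\,k}\,\omega(A^0)+\sum_{i=1}^{k-1}\epsilon_0^{\,k-i}\,\omega(B^i),
$$
where $A^k=\mathcal A_{\mathrm M^k}(u,\mathrm B^+_{14\sqrt n})\cap(\mathcal Q^{n-1}_1\times(0,1))$ and $B^i=\{\,\mathcal M(f^n)\ge(\mathrm C_0\mathrm M^i)^n\,\}$. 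The next step is a summability computation: I want to show $\sum_{k\ge 1}\mathrm M^{pk/n}\,\omega(A^k)^{q/p}<\infty$, i.e. that the function $\Theta(u,r)$ (restricted to the relevant cube) satisfies the Proposition \ref{P1} criterion with exponent pair $(p/n\cdot n,q)=(p,q)$ — more precisely one tests $h=\Theta(u,r)$ against the dyadic levels $\eta\mathrm M^j$, observing that $\{x:\Theta(u,r)(x)>c\mathrm M^k\}\subset A^k$ up to the universal constant $c$ from the definition of $\mathcal A_{\mathrm M^k}$. Plugging the decay in, the $\epsilon_0^k\omega(A^0)$ term is handled by choosing $\epsilon_0$ so small that $\epsilon_0\,\mathrm M^{p/n}<1$ (geometric series), and the double-sum term is handled by a discrete Young/Fubini interchange, reducing it to $\sum_i \mathrm M^{pi/n}\omega(B^i)^{q/p}$. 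That last sum is exactly the Proposition \ref{P1} characterization of $\mathcal M(f^n)^{1/n}\in L^{p,q}_\omega$, which by the weighted Hardy–Littlewood–Wiener bound (Lemma \ref{Hardylpq}, applicable since $\omega\in\mathfrak A_{p/n}$) is controlled by $\|f\|_{L^{p,q}_\omega}$; one needs the elementary identity $\|\,\mathcal M(f^n)^{1/n}\,\|_{L^{p,q}_\omega}^n \sim \|\mathcal M(|f|^n)\|_{L^{p/n,q/n}_\omega}$ relating Lorentz norms under the $n$-th power, together with monotonicity of the Lorentz quasinorm in the second index when $q/n$ is small. Once $\|\Theta(u,r)\|_{L^{p,q}_\omega(\mathcal Q^{n-1}_1\times(0,1))}$ is bounded, Lemma \ref{caracterizationofhessian} upgrades this to $D^2u\in L^{p,q}_\omega$ of the corresponding region, and a finite covering of $\mathrm B^+_{1/2}$ by (rescaled, recentered) such half-cube configurations — flattening the boundary piece via the $C^{2,\alpha}$ chart, which is why only $\partial\Omega\in C^{2,\alpha}$ and the $C^{1,\alpha}$ data enter — yields the stated estimate on $\mathrm B^+_{1/2}$, with the constant accumulating the dependencies listed.

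I expect the main obstacle to be the bookkeeping in the double sum $\sum_k\mathrm M^{pk/n}\big(\sum_{i<k}\epsilon_0^{k-i}\omega(B^i)\big)^{q/p}$ when $q<p$: the quasi-triangle (or rather sub-additivity of $t\mapsto t^{q/p}$ for $q/p\le 1$, and a convexity argument when $q/p>1$) must be applied carefully so that the geometric weight $\epsilon_0^{k-i}$ is not destroyed, and one must keep $\mathrm M$ fixed (it is a universal constant coming from Lemma \ref{propdecaimento}) while only $\epsilon_0$ is free — so the required smallness $\epsilon_0\mathrm M^{p/n}<1$ is a genuine constraint that must be compatible with the hypotheses of Corollary \ref{corolariodecaimento}. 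A secondary technical point is verifying that the tangential-touching sets behave well under the boundary flattening and the covering, so that the local half-ball estimate globalizes without losing the weighted structure; this is routine given that $\mathfrak A_{p/n}$ weights are doubling (Lemma \ref{propriedadesdospesos}(c)) but must be stated. The rest — the $L^{\tilde p}\hookleftarrow L^{p,q}_\omega$ embedding, the ABP normalization, the stability of $L^{\tilde p}$-viscosity solutions under the dilation $F\mapsto F_\tau$ — is standard and I would treat it briefly.
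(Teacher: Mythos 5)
Your proposal follows essentially the same route as the paper's proof: rescale to a normalized configuration on $\mathrm{B}^{+}_{14\sqrt{n}}$, apply Corollary \ref{corolariodecaimento} to get the weighted decay of the sets $A^{k}$, sum via Proposition \ref{P1} (with the $B^{i}$-contribution controlled by Lemma \ref{Hardylpq} applied to $\mathcal{M}(f^{n})$ in $L^{\frac{p}{n},\frac{q}{n}}_{\omega}$), pass from $\Theta$ to $D^{2}u$ by Lemma \ref{caracterizationofhessian}, and conclude with a covering argument. Only two bookkeeping remarks: the dyadic weight in the Proposition \ref{P1} criterion is $\mathrm{M}^{qk}$ (so the smallness requirement is $\mathrm{M}^{q}\epsilon_{0}^{q/p}<1$, harmless since $\epsilon_{0}$ is free), and the points of $\mathrm{B}^{+}_{1/2}$ away from $\mathrm{T}_{1}$ are treated in the paper by the interior analogues from \cite{PT} rather than by any boundary flattening, which is not needed here because the configuration is already flat.
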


\begin{proof}
Fix $x_0 \in \mathrm{B}^{+}_{1/2}\cup T_{\frac{1}{2}}$. When $x_0 \in \mathrm{T}_{\frac{1}{2}}$, we choosen $0 < r < \frac{1-|x_0|}{14\sqrt{n}}$ and define
$$
\kappa \defeq \frac{\tilde{\epsilon} r}{\tilde{\epsilon} r^{-1} \|u\|_{L^{\infty}(\mathrm{B}^+_{14r\sqrt{n}}(x_0))}  + \mathrm{C}_{p,q}\|f\|_{L^{p,q}_{\omega}(\mathrm{B}^+_{14r\sqrt{n}}(x_0))}+\tilde{\epsilon} r^{-1}\Vert g\Vert_{C^{1,\alpha}(\mathrm{T}_{14r\sqrt{n}}(x_{0}))}}
$$
where the constant $\tilde{\epsilon}>0$ is the constant  of the Lemma \ref{propdecaimento} for $\epsilon_{0}>0$ which will be determined a posteriori and $\mathrm{C}_{p,q}$ is the constant of the Lemma \ref{Incluaolorentzlebesgue}. Due to the limitation of $r$, the function $\tilde{u}(y) \defeq \frac{\kappa}{r^{2}}u(x_0+ry), y\in B^{+}_{14\sqrt{n}}$ is well defined  and is a normalized $C^{0}$-viscosity solution to
$$
\left\{
\begin{array}{rclcl}
\tilde{F}(D^2 \tilde{u},x) &=& \tilde{f}(x) & \mbox{in} & \mathrm{B}^+_{14\sqrt{n}},\\
\tilde{\beta}\cdot D\tilde{u}+\tilde{\gamma}\tilde{u}
&=&\tilde{g}  & \mbox{on} & \mathrm{T}_{14\sqrt{n}} .
\end{array}
\right.
$$
where
$$
\left\{
\begin{array}{rcl}
\tilde{F}(\mathrm{X}, y) &\defeq& \kappa F\left(\frac{1}{\kappa} \mathrm{X},x_{0}+ry\right) \\
\tilde{f}(y) &\defeq& \kappa f(x_0+ry)\\
\tilde{\beta}(y) &\defeq&  \beta(x_0+ry)\\
\tilde{\gamma}(y) &\defeq& r\gamma(x_{0}+ry)\\
\tilde{g}(y)&\defeq& \frac{\kappa}{r}g(x_{0}+ry)\\
\tilde{\omega}(y)&\defeq&\omega(x_{0}+ry).
\end{array}
\right.
$$
Hence, $\tilde{F}$ fulfills the conditions (E1)-(E5) and $\tilde{\omega}\in \mathfrak{A}_{\frac{p}{n}}$ (since $\omega\in \mathfrak{A}_{\frac{p}{n}}$). Moreover, Lemma $\ref{Incluaolorentzlebesgue}$ ensures that
\begin{eqnarray} \label{(16)}
\|\tilde{f}\|_{L^{n}\left(\mathrm{B}^+_{14\sqrt{n}}\right)} &=& \frac{\kappa}{r}\|f\|_{L^{n}(\mathrm{B}^{+}_{14r\sqrt{n}}(x_{0}))}\leq \frac{\kappa\mathrm{C}_{p,q}}{r}\|f\|_{L^{p,q}_{\omega}\left(\mathrm{B}^+_{14r\sqrt{n}}\right)} \le \tilde{\epsilon}.
\end{eqnarray}
Thus, by Corollary \ref{corolariodecaimento}, we can conclude that
\begin{eqnarray}\label{3.2}
\tilde{\omega}(A^{k})\leq \tilde{\epsilon}_{0}^{k}\tilde{\omega}(A_{0})+\displaystyle\sum_{i=1}^{k-1}\tilde{\epsilon}_{0}^{i}\tilde{\omega}(B^{k-i}),
\end{eqnarray}
where for $k \geq 0$ we define
\begin{eqnarray*}
A^{k}&\defeq & \mathcal{A}_{\mathrm{M}^k}(\tilde{u}, \mathrm{B}^+_{14\sqrt{n}}) \cap \left(\mathcal{Q}^{n-1}_1 \times (0,1)\right)\\
B^{k} &\defeq&  \left\{ x \in \left(\mathcal{Q}^{n-1}_1 \times (0,1)\right); \,\, \mathcal{M}(\tilde{f}^n)(x) \ge (\mathrm{C}_0\mathrm{M}^k)^n\right\}.
\end{eqnarray*}
On the other hand, the Lemma \ref{Hardylpq} ensures that
\begin{eqnarray*}
\|\mathcal{M}(\tilde{f}^{n})\|_{L^{\frac{p}{n},\frac{q}{n}}_{\tilde{\omega}}(\mathcal{Q}_{1}^{n-1}\times (0,1))}\leq C\| \tilde{f^{n}}\|_{L^{\frac{p}{n},\frac{q}{n}}_{\tilde{\omega}}(\mathcal{Q}^{n-1}_{1}\times (0,1))}< \infty,
\end{eqnarray*}
since, $f\in L^{p,q}_{\omega}(B^{+}_{1})$.
Therefore, using \eqref{3.2}  and Proposition \ref{P1} we obtain
\begin{eqnarray*}
\sum_{k=1}^{\infty} \mathrm{M}^{qk} \tilde{\omega}(A^{k})^{\frac{q}{p}} &\le& C(n,p,q)\left( \sum_{k=1}^{\infty}(M^{q}\tilde{\epsilon}_{0}^{\frac{q}{p}})^{k}\tilde{\omega}(A_{0})^{\frac{q}{p}}+\sum_{k=1}^{\infty} M^{qk}\sum_{i=1}^{k}\tilde{\epsilon}^{\frac{q}{p}i}_{0}\tilde{\omega}^{\frac{q}{p}}(B^{k-i})\right)\\
&\leq& C\left(\sum_{k=1}^{\infty}(M^{q}\tilde{\epsilon}^{\frac{q}{p}}_{0})^{k}\tilde{\omega}^{\frac{q}{p}}(\mathcal{Q}^{n-1}_{1}\times (0,1))+\sum_{k=1}^{\infty} \sum_{i=1}^{k-1}(M^{q}\tilde{\epsilon}^{\frac{q}{p}}_{0})^{i} M^{q(k-i)}\tilde{\omega}^{\frac{q}{p}}(B^{k-i})\right)\\
&\leq&C \sum_{k=1}^{\infty}(M^{q}\tilde{\epsilon}^{\frac{q}{p}}_{0})^{k}\left(\tilde{\omega}(\mathcal{Q}^{n-1}_{1}\times(0,1))+\sum_{i=1}^{\infty}M^{qi}\tilde{\omega}(B^{i})^{\frac{q}{p}}\right)\leq C(n,p,q,[\omega]_{\frac{p}{n}}),
\end{eqnarray*}
chosen $\epsilon_{0}$ small enough in such a way that $M^{q}\tilde{\epsilon}^{\frac{q}{p}}_{0}<1$. This estimate together with the Lemma \ref{P1} implies that $\Theta(\tilde{u},B^{+}_{\frac{1}{2}})\in L^{p,q}_{\tilde{\omega}}(B^{+}_{\frac{1}{2}})$ and consequently by Lemma \ref{caracterizationofhessian}, $\|D^{2}\tilde{u}\|_{L^{p,q}_{\tilde{\omega}}(B^{+}_{\frac{1}{2}})}\le C$, for positive constant $C$ depending only on $n$, $\lambda$, $\Lambda$, $p$, $q$, $[\omega]_{\frac{p}{n}}$, $\|\beta\|_{C^{1,\alpha}(T_{1})}$, $\|\gamma\|_{C^{1,\alpha}(\mathrm{T}_{1})}$, $\mu_0$, $r_{0}$ and $\theta_0$ (constants of condition (E3)). Rescaling $u$, we get   
\begin{eqnarray*}
\|D^{2}u\|_{L^{p,q}_{\omega}(B^{+}_{\frac{r}{2}}(x_{0}))}\leq C(\|u\|_{L^{\infty}(B^{+}_{1})}+\|f\|_{L^{p,q}_{\omega}(B^{+}_{1})}+\|g\|_{C^{1,\alpha}(T_{1})}),
\end{eqnarray*}
where $C>0$ depends only on $n$, $\lambda$, $\Lambda$, $p$, $q$, $[\omega]_{\frac{p}{n}}$,  $\|\beta\|_{C^{1,\alpha}(T_{1})}$,$\|\gamma\|_{C^{1,\alpha}(\mathrm{T}_{1})}$, $\mu_{0}$, $r_{0}$, $\theta_0$ and $r$.

On the other hand, if $x_0 \in \mathrm{B}^+_{1/2}$, then using the lemmas 4.2 and 5.3 in \cite{PT}(with minor modifications) instead of lemmas \ref{Approx} and  \ref{propdecaimento}, we proceed entirely analogously as above to obtain for $r$ small enough that 
\begin{eqnarray*}
\|D^{2}u\|_{L^{p,q}_{\omega}(B_{\frac{r}{2}}(x_{0}))}\leq C(\|u\|_{L^{\infty}(B^{+}_{1})}+\|f\|_{L^{p,q}_{\omega}(B^{+}_{1})}+\|g\|_{C^{1,\alpha}(T_{1})}).
\end{eqnarray*}
Finally, by combining interior and boundary estimates, we obtain the desired results by using a standard covering argument. This finishes proof of  case $q\in(0,\infty)$ of Proposition. The case $q=\infty$ follows proceeds with analogous ideas above. This ends the proof of the Proposition. 
	
\end{proof}


With this proposition in hand, we can find weighted Lorentz-Sobolev estimates for $C^{0}$-viscosity solutions of \eqref{equation1} by

\begin{proposition}\label{solution4entradas}
Let $u$ be a bounded $C^{0}$-viscosity solution of \eqref{equation1}. Asssume the  $f\in L^{p,q}_{\omega}(B^{+}_{1})$ $((p,q)\in (n,+\infty)\times (0,+\infty])$, $\omega\in \mathfrak{A}_{\frac{p}{n}}$ and  (E1)-(E5) are in force. Then, there exists a constant $\mathrm{C}>0$ depending on  $n$, $\lambda$, $\Lambda$, $p$, $q$, $[\omega]_{\frac{p}{n}}$, $\Vert \beta\Vert_{C^{1,\alpha}(\mathrm{T}_{1})}$, $\Vert \gamma\Vert_{C^{1,\alpha}(\overline{\mathrm{T}_{1}})}$, $\alpha$, $r_{0}$, $\theta_0$ and $\mu_{0}$, such that  $u \in W^{2}L^{p,q}_{\omega} \left(B^+_{\frac{1}{2}}\right)$ and
$$	\|u\|_{W^{2}L^{p,q}_{\omega}\left(\mathrm{B}^+_{\frac{1}{2}}\right)} \le \mathrm{C} \cdot \left( \|u\|_{L^{\infty}(\mathrm{B}^{+}_1)} + \|f\|_{L^{p,q}_{\omega}(\mathrm{B}^{+}_1)}+\Vert g\Vert_{C^{1,\alpha}(\mathrm{T}_{1})}\right).
$$
\end{proposition}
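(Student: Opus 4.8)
The plan is to deduce the claim from Proposition \ref{T-flat} by \emph{freezing the lower-order arguments of $F$ along the solution}; the only preliminary input needed is that $u$ is a $C^{1}$ function up to the flat boundary, which is what makes the freezing legitimate. To obtain this, observe that $f\in L^{p,q}_{\omega}(\mathrm B^+_1)\subset L^{\tilde p}(\mathrm B^+_1)$ for any $\tilde p\in(n,p)$ by Lemma \ref{Incluaolorentzlebesgue}, so the $W^{2,\tilde p}$-regularity theory for \eqref{equation1} (see \cite{Bessa}), followed by Morrey's embedding, gives $u\in C^{1,1-n/\tilde p}(\overline{\mathrm B^+_{3/4}})$ with
$$
\|u\|_{C^{1}(\overline{\mathrm B^+_{3/4}})}\le \mathrm C\big(\|u\|_{L^{\infty}(\mathrm B^+_1)}+\|f\|_{L^{p,q}_{\omega}(\mathrm B^+_1)}+\|g\|_{C^{1,\alpha}(\overline{\mathrm T_1})}\big).
$$
In particular $Du$ and $u$ are bounded continuous functions on $\overline{\mathrm B^+_{3/4}}$.

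Next I would set $\mathcal G(\mathrm X,x)\defeq F(\mathrm X,Du(x),u(x),x)-F(0,Du(x),u(x),x)$ and $\mathfrak f(x)\defeq f(x)-F(0,Du(x),u(x),x)$. Continuity of $F$, $Du$, $u$ gives $\mathcal G\in C^{0}$ and $\mathfrak f\in C^{0}$; applying (E1) with $\zeta_1=\zeta_2=Du(x)$, $r_1=r_2=u(x)$ shows $\mathcal G$ is $(\lambda,\Lambda,0,0)$-elliptic with $\mathcal G(0,\cdot)\equiv0$; and $|F(0,Du(x),u(x),x)|\le\sigma|Du(x)|+\xi|u(x)|$, so $\mathfrak f\in L^{p,q}_{\omega}(\mathrm B^+_1)$ with $\|\mathfrak f\|_{L^{p,q}_{\omega}(\mathrm B^+_1)}\le\mathrm C\big(\|f\|_{L^{p,q}_{\omega}(\mathrm B^+_1)}+\|u\|_{L^{\infty}(\mathrm B^+_1)}+\|g\|_{C^{1,\alpha}(\overline{\mathrm T_1})}\big)$, using the gradient bound and $\|1\|_{L^{p,q}_{\omega}(E)}\simeq\omega(E)^{1/p}<\infty$ for bounded $E$. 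The key point is that $\mathcal G$ inherits (E3)--(E5) with the \emph{same} universal constants: since by (E1) $|F_\tau(\mathrm X,\varsigma,s,x)-F_\tau(\mathrm X,0,0,x)|\le\tau(\sigma|\varsigma|+\xi|s|)\to0$, the recession operator is blind to the lower-order slots, whence $\mathcal G^{\sharp}(\mathrm X,x)=F^{\sharp}(\mathrm X,0,0,x)=F^{\sharp}(\mathrm X,x)$ and therefore $\Psi_{\mathcal G^{\sharp}}\equiv\Psi_{F^{\sharp}}$, so (E3)--(E5) hold verbatim for $\mathcal G$. Finally, since $u\in C^{1}$, for any $\varphi\in C^{2}$ touching $u$ from above at $x_0\in\mathrm B^+_1$ one has $D\varphi(x_0)=Du(x_0)$, $\varphi(x_0)=u(x_0)$, hence $\mathcal G(D^{2}\varphi(x_0),x_0)=F(D^{2}\varphi(x_0),D\varphi(x_0),\varphi(x_0),x_0)-F(0,Du(x_0),u(x_0),x_0)\ge\mathfrak f(x_0)$, and symmetrically from below; the oblique condition on $\mathrm T_1$ is untouched. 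Thus $u$ is a $C^{0}$-viscosity solution of $\mathcal G(D^{2}u,x)=\mathfrak f(x)$ in $\mathrm B^+_1$, $\mathfrak B(Du,u,x)=g$ on $\mathrm T_1$.

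Then Proposition \ref{T-flat} applies to $(\mathcal G,\mathfrak f)$ (after the usual normalization of $u$) and yields $D^{2}u\in L^{p,q}_{\omega}(\mathrm B^+_{1/2})$ with $\|D^{2}u\|_{L^{p,q}_{\omega}(\mathrm B^+_{1/2})}\le\mathrm C\big(\|u\|_{L^{\infty}(\mathrm B^+_1)}+\|\mathfrak f\|_{L^{p,q}_{\omega}(\mathrm B^+_1)}+\|g\|_{C^{1,\alpha}(\overline{\mathrm T_1})}\big)$, where $\mathrm C$ does not depend on $u$ precisely because the structural constants of $\mathcal G$ coincide with those of $F$. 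To complete the $W^{2}L^{p,q}_{\omega}$ bound I would estimate the lower-order terms of the norm using boundedness of $u$, $Du$ on $\mathrm B^+_{1/2}$ together with $\|1\|_{L^{p,q}_{\omega}(\mathrm B^+_{1/2})}\lesssim\omega(\mathrm B^+_{1/2})^{1/p}$: $\|u\|_{L^{p,q}_{\omega}(\mathrm B^+_{1/2})}\lesssim\|u\|_{L^{\infty}(\mathrm B^+_1)}$ and $\|Du\|_{L^{p,q}_{\omega}(\mathrm B^+_{1/2})}\lesssim\|Du\|_{L^{\infty}(\mathrm B^+_{3/4})}$, the latter controlled by the gradient estimate above. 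Summing the three contributions and substituting the bound for $\|\mathfrak f\|_{L^{p,q}_{\omega}(\mathrm B^+_1)}$ gives the asserted estimate.

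The main obstacle is exactly this reduction. All the relaxed-convexity machinery behind Proposition \ref{T-flat} (the approximation Lemma \ref{Approx}, the decay Lemma \ref{propdecaimento} and Corollary \ref{corolariodecaimento}) is formulated only for operators depending on $(\mathrm X,x)$, so one must (i) turn the viscosity solution $u$ into an honest solution of such an equation — which forces the preliminary $C^{1}$ regularity, rather than mere Pucci-class membership — and (ii) verify that the frozen operator $\mathcal G$ still obeys (E3)--(E5) with constants independent of $u$; the identity $\mathcal G^{\sharp}=F^{\sharp}(\cdot,0,0,\cdot)=F^{\sharp}$ is the one fact that makes this possible.
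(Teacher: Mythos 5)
Your argument is correct and lands on the same pivot as the paper --- everything is reduced to Proposition \ref{T-flat} --- but the reduction itself is genuinely different. The paper freezes the lower-order slots at zero: it regards $u$ as a solution of $\tilde F(D^2u,x)=\tilde f$ with $\tilde F(\mathrm{X},x)=F(\mathrm{X},0,0,x)$ and $|\tilde f|\le \sigma|Du|+\xi|u|+|f|$, applies Proposition \ref{T-flat}, and then controls the extra terms in $\tilde f$ (and the first-order part of the $W^{2}L^{p,q}_{\omega}$ norm) by a weighted Lorentz gradient estimate borrowed from \cite[Lemma 3.2]{ZhangZhenglorentz}. You instead freeze along the solution, which forces a preliminary $C^{1}$ bound via the $W^{2,\tilde p}$ oblique theory of \cite{Bessa} plus Morrey embedding, but after that only $L^{\infty}$ bounds on $u$, $Du$ are needed, and the assertion that $u$ solves the frozen equation in the viscosity sense is verified honestly (a $C^{2}$ test function touching a $C^{1}$ function shares its gradient at an interior touching point) --- a point the paper passes over in silence, since giving $\tilde f$ a pointwise meaning already presupposes something like a.e.\ second derivatives. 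The price of your route is the heavier black box (\cite{Bessa}) and the fact that your operator $\mathcal G$ depends on $u$, so, as you rightly stress, one must check that every constant fed into Proposition \ref{T-flat} (ellipticity, $\mathcal G(0,\cdot)=0$, $\mathcal G^{\sharp}=F^{\sharp}$ and hence (E3)--(E5) with the same $\theta_0$, $r_0$, $\mathfrak{c_{1}}$, $\mathfrak{c_{2}}$) is independent of $u$; that is exactly the right checklist, and the one unquantified item --- the rate of convergence $\mathcal G_{\tau}\to\mathcal G^{\sharp}$, which picks up a term $\tau(\sigma\|Du\|_{\infty}+\xi\|u\|_{\infty})$ --- is harmless because in the rescaled problems inside Proposition \ref{T-flat} the normalization constant $\kappa$ carries precisely these norms in its denominator. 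Two bookkeeping remarks: your $C^{1}$ estimate lives on $\overline{\mathrm{B}^{+}_{3/4}}$, so $\mathfrak f$ and the frozen equation should be set there, with Proposition \ref{T-flat} applied on half-balls contained in $\mathrm{B}^{+}_{3/4}$ and a covering to reach $\mathrm{B}^{+}_{1/2}$ (the same covering the paper already uses); and continuity of $\mathfrak f$, needed for the $C^{0}$-viscosity framework of Proposition \ref{T-flat}, comes from the continuity of $f$ implicit in the hypothesis that $u$ is a $C^{0}$-viscosity solution.
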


\begin{proof}
Notice that $u$ is also a viscosity solution of
\begin{equation*}
\left\{
\begin{array}{rclcl}
\tilde{F}(D^2u, x) &=& \tilde{f}(x) & \mbox{in} & \mathrm{B}^+_1,\\
\mathfrak{B}(Du,u,x)&=& g(x) & \mbox{on} & \mathrm{T}_1 .
\end{array}
\right.
\end{equation*}
where $\tilde{F}(\mathrm{X},x)  \defeq  F(\mathrm{X},0,0,x)$ and $\tilde{f}$ is a function satisfying
\begin{eqnarray*}
|\tilde{f}|\leq \sigma|Du|+\xi|u|+|f|.
\end{eqnarray*}
Thus, we can apply Proposition \ref{T-flat} and conclude that
\begin{eqnarray}\label{mens2}
\|D^{2}u\|_{L^{p,q}_{\omega}\left(\mathrm{B}^{+}_{\frac{1}{2}}\right)} \le \mathrm{C} \cdot \left( \|u\|_{L^{\infty}(\mathrm{B}^{+}_{1})} + \|\tilde{f}\|_{L^{p,q}_{\omega}(\mathrm{B}^+_1)}+\Vert g\Vert_{C^{1,\alpha}(\mathrm{T}_{1})}\right).
\end{eqnarray}
Now, by proceeding similarly as in \cite[Lemma 3.2]{ZhangZhenglorentz} for obtain gradient estimate,
\begin{equation}\label{mens3}
\Vert Du\Vert_{L^{p,q}_{\omega}(\mathrm{B}_{\frac{1}{2}}^{+})}\leq \mathrm{C}\cdot (\Vert u\Vert_{L^{\infty}(\mathrm{B}^{+}_{1})}+\Vert f\Vert_{L^{p,q}_{\omega}(\mathrm{B}^{+}_{1})}).
\end{equation}
Finally, combining the estimates (\ref{mens2}) and (\ref{mens3})  and the fact that $u\in L^{p,q}_{\omega}(B^{+}_{\frac{1}{2}})$ with estimate $\|u\|_{L^{p,q}_{\omega}(B^{+}_{\frac{1}{2}})}\leq \mathrm{C}\|u\|_{L^{\infty}(B^{+}_{1})}$ we complete the desired estimate.
\end{proof}

Consequently, we have by classical arguments (cf. \cite{Bessa} and \cite{Winter}) the extension of the Proposition \ref{solution4entradas} to $L^{\tilde{p}}$-viscosity solutions.

\begin{corollary}\label{Cor}
Let $u$ be a bounded $L^{\tilde{p}}-$viscosity solution of \eqref{equation1}, where $\beta,\gamma, g \in C^{1,\alpha}(\mathrm{T}_1)$ with $\beta \cdot \textbf{n} \ge \mu_0$ for some $\mu_0 >0$, $\gamma \le 0$, $f \in L^{p,q}_{\omega}(\mathrm{B}^+_1)$, for $(p,q)\in(n,\infty)\times (0,\infty]$, $\omega\in \mathfrak{A}_{\frac{p}{n}}$ with condition $\tilde{p}\in[n,p)$. Further, assume that  $F^{\sharp}$ satisfies $(E4)-(E5)$ and $F$ fulfills the condition $(SC)$. Then, there exist positive constants $\beta_0=\beta_0(n,\lambda,\Lambda,p,q,\tilde{p})$, $r_0 = r_0(n, \lambda, \Lambda, p,q,\tilde{p})$ and $\mathrm{C}=\mathrm{C}(n,\lambda,\Lambda,p,q,\tilde{p},[\omega]_{\frac{p}{n}},\theta_0,\|\beta\|_{C^{1,\alpha}(\mathrm{T}_{1})},\|\gamma\|_{C^{1,\alpha}(\mathrm{T}_{1})},r_0)>0$, such that if
$$
\left(\intav{\mathrm{B}_r(x_0) \cap \mathrm{B}^+_1} \psi_{F^{\sharp}}(x, x_0)^p dx\right)^{\frac{1}{p}} \le \psi_0
$$
for any $x_0 \in \mathrm{B}^+_1$ and $r \in (0, r_0)$, then $u \in W^{2}L^{p,q}_{\omega}\left(\mathrm{B}^+_{\frac{1}{2}}\right)$ and
$$
\|u\|_{W^{2}L^{p,q}_{\omega}\left(\mathrm{B}^+_{\frac{1}{2}}\right)} \le \mathrm{C} \cdot\left( \|u\|_{L^{\infty}(\mathrm{B}^+_1)} +\|f\|_{L^{p,q}_{\omega}(\mathrm{B}^+_1)}+\Vert g\Vert_{C^{1,\alpha}(\mathrm{T}_{1})}\right).
$$
\end{corollary}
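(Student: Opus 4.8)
\medskip
\noindent\textbf{Proof strategy.} The plan is to reduce the statement to the $C^{0}$-viscosity setting of Proposition~\ref{solution4entradas} by approximating simultaneously the source term and the coefficients of $F$, and then to transfer the resulting uniform estimate back to $u$ via the Stability Lemma and the ABP Maximum Principle. Concretely, I would mollify in the $x$-variable: with $\rho_{\varepsilon}$ a standard mollifier, set $f_{\varepsilon}\defeq f\ast\rho_{\varepsilon}$ and $F_{\varepsilon}(\mathrm{X},\varsigma,s,x)\defeq\big(F(\mathrm{X},\varsigma,s,\cdot)\ast\rho_{\varepsilon}\big)(x)$. Then $f_{\varepsilon},F_{\varepsilon}$ are continuous, $F_{\varepsilon}$ is again $(\lambda,\Lambda,\sigma,\xi)$-elliptic, its recession operator equals $(F^{\sharp})_{\varepsilon}$ and hence still satisfies the interior and up-to-the-boundary $C^{1,1}$ a priori estimates $(E4)$–$(E5)$ with the same $\mathfrak{c}_{1},\mathfrak{c}_{2}$, while $\Psi_{F_{\varepsilon}^{\sharp}}$ still obeys the Hölder-in-$L^{p}$-average bound $(E3)$ with $\theta_{0}$ replaced by, say, $2\theta_{0}$. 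Since $|f_{\varepsilon}|\le\mathcal{M}f$ pointwise, Lemma~\ref{Hardylpq} gives $\|f_{\varepsilon}\|_{L^{p,q}_{\omega}(\mathrm{B}^{+}_{1})}\le\mathrm{C}\|f\|_{L^{p,q}_{\omega}(\mathrm{B}^{+}_{1})}$ uniformly in $\varepsilon$, and $f_{\varepsilon}\to f$ in $L^{\tilde{p}}(\mathrm{B}^{+}_{1})$ (hence in $L^{n}$).

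\medskip
Next I would solve the approximate oblique problem
$$
\left\{
\begin{array}{rclcl}
F_{\varepsilon}(D^2 u_{\varepsilon}, Du_{\varepsilon}, u_{\varepsilon}, x) &=& f_{\varepsilon}(x) & \mbox{in} & \mathrm{B}^+_1,\\
\mathfrak{B}(Du_{\varepsilon},u_{\varepsilon},x) &=& g(x) & \mbox{on} & \mathrm{T}_1,\\
u_{\varepsilon} &=& u & \mbox{on} & \partial\mathrm{B}^+_1\setminus\mathrm{T}_1,
\end{array}
\right.
$$
which admits a unique $C^{0}$-viscosity solution $u_{\varepsilon}\in C^{0}(\overline{\mathrm{B}^{+}_{1}})$ by Theorem~\ref{Existencia} together with Perron's method and the comparison principle for oblique problems (the boundary datum is continuous since $L^{\tilde{p}}$-viscosity solutions are continuous). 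The ABP Maximum Principle (Lemma~\ref{ABP-fullversion}), using $\gamma\le 0$ and the embedding $\|f_{\varepsilon}\|_{L^{n}}\le\mathrm{C}_{p,q}\|f_{\varepsilon}\|_{L^{p,q}_{\omega}}$ from Lemma~\ref{Incluaolorentzlebesgue}, yields $\|u_{\varepsilon}\|_{L^{\infty}(\mathrm{B}^{+}_{1})}\le\mathrm{C}\big(\|u\|_{L^{\infty}(\mathrm{B}^{+}_{1})}+\|g\|_{C^{1,\alpha}(\mathrm{T}_{1})}+\|f\|_{L^{p,q}_{\omega}(\mathrm{B}^{+}_{1})}\big)$ uniformly in $\varepsilon$. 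Since $F_{\varepsilon},f_{\varepsilon}$ are continuous and satisfy $(E1)$–$(E5)$ with uniform constants, Proposition~\ref{solution4entradas} applies to each $u_{\varepsilon}$ and gives
$$
\|u_{\varepsilon}\|_{W^{2}L^{p,q}_{\omega}(\mathrm{B}^{+}_{1/2})}\le\mathrm{C}\big(\|u_{\varepsilon}\|_{L^{\infty}(\mathrm{B}^{+}_{1})}+\|f_{\varepsilon}\|_{L^{p,q}_{\omega}(\mathrm{B}^{+}_{1})}+\|g\|_{C^{1,\alpha}(\mathrm{T}_{1})}\big)\le\mathrm{C}\big(\|u\|_{L^{\infty}(\mathrm{B}^{+}_{1})}+\|f\|_{L^{p,q}_{\omega}(\mathrm{B}^{+}_{1})}+\|g\|_{C^{1,\alpha}(\mathrm{T}_{1})}\big),
$$
with $\mathrm{C}$ independent of $\varepsilon$.

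\medskip
Finally I would pass to the limit. The interior and up-to-the-boundary $C^{1,\alpha}$-estimates for the fundamental class with oblique data, together with the uniform $L^{\infty}$-bound, make $\{u_{\varepsilon}\}$ precompact in $C^{0}_{\mathrm{loc}}(\mathrm{B}^{+}_{1}\cup\mathrm{T}_{1})$; extract $u_{\varepsilon}\to u_{\infty}$ along a subsequence. By the Stability Lemma~\ref{Est}, using $f_{\varepsilon}\to f$ in $L^{\tilde{p}}$, $F_{\varepsilon}\to F$ and $\tilde{p}\ge n$, the limit $u_{\infty}$ is an $L^{\tilde{p}}$-viscosity solution of \eqref{equation1} with the same datum on $\partial\mathrm{B}^{+}_{1}\setminus\mathrm{T}_{1}$, so the ABP-based comparison principle forces $u_{\infty}=u$. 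The uniform estimate shows $\{D^{2}u_{\varepsilon}\}$ is bounded in $L^{p,q}_{\omega}(\mathrm{B}^{+}_{1/2})\hookrightarrow L^{\tilde{p}}(\mathrm{B}^{+}_{1/2})$; along a further subsequence $D^{2}u_{\varepsilon}\rightharpoonup D^{2}u$ weakly in $L^{\tilde{p}}$, the distributional limit being identified through $u_{\varepsilon}\to u$, and the weighted Lorentz quasinorm is lower semicontinuous under this convergence (via the sum characterization of Proposition~\ref{P1} and Fatou's lemma applied to the distribution functions). Treating $Du_{\varepsilon}$ and $u_{\varepsilon}$ likewise yields $u\in W^{2}L^{p,q}_{\omega}(\mathrm{B}^{+}_{1/2})$ with the claimed bound; the values of $\beta_{0}$, $r_{0}$ and $\mathrm{C}$ are inherited from Proposition~\ref{solution4entradas}.

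\medskip
\noindent\textbf{Main obstacle.} The delicate points are (i) checking that the mollification preserves $(E3)$–$(E5)$ with constants \emph{uniform in $\varepsilon$}, in particular that $(F^{\sharp})_{\varepsilon}$ still admits the $C^{1,1}$ a priori estimates and that the oscillation stays $L^{p}$-Hölder; (ii) the lower semicontinuity of the weighted Lorentz quasinorm under weak convergence, which, not being a norm, must be handled through the distribution functions and the weight together with the open-end and strong-doubling properties of $\mathfrak{A}_{p/n}$; and (iii) supplying existence of $u_{\varepsilon}$ and uniqueness of the limiting solution for the full operator carrying $Du$ and $u$, since Theorem~\ref{Existencia} is stated only for $F(D^{2}u,x)$ and must be complemented by the structural condition $(SC)$ and the oblique comparison principle.
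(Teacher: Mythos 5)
Your overall architecture is the same as the paper's: approximate the data, solve approximate oblique problems, apply Proposition~\ref{solution4entradas} to get a uniform $W^{2}L^{p,q}_{\omega}$ bound, pass to the limit via the Stability Lemma~\ref{Est}, and identify the limit with $u$ through the ABP Maximum Principle~\ref{ABP-fullversion}. But two of the points you list as ``obstacles'' are genuine gaps in the argument as written, and the paper avoids both by different choices. First, mollifying the operator $F$ in $x$ is unnecessary --- assumption (E1) already gives $F\in C^{0}$, so the only datum that needs smoothing is $f$ (and, for the existence theorem, $g$) --- and it is not innocuous: your claims that the recession limit commutes with convolution and that $(F^{\sharp})_{\varepsilon}$ inherits the interior and up-to-the-boundary $C^{1,1}$ a priori estimates (E4)--(E5) with the same constants $\mathfrak{c}_{1},\mathfrak{c}_{2}$ are unproved, and such a priori estimates are in general not stable under averaging a family of frozen-coefficient operators (this is precisely the kind of structural stability the relaxed-convexity framework refuses to assume). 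Since Proposition~\ref{solution4entradas} takes (E4)--(E5) as hypotheses, your uniform-in-$\varepsilon$ estimate rests on an unverified assumption. The paper keeps $F$ fixed and only approximates $f$ by $f_{j}\in C^{\infty}(\overline{\mathrm{B}^{+}_{1}})\cap L^{p,q}_{\omega}(\mathrm{B}^{+}_{1})$ with $f_{j}\to f$ in $L^{p,q}_{\omega}$, and $g$ by $g_{j}\to g$ in $C^{1,\alpha}$, so the issue never arises.

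Second, you pose the approximate problems for the full operator with $(Du,u)$-dependence, for which Theorem~\ref{Existencia} does not supply solutions; you acknowledge this (your point (iii)) but leave it unresolved, whereas the paper disposes of it at the outset by reducing to equations without dependence on $Du$ and $u$ (citing Winter, Theorem 4.3) and only then invoking Theorem~\ref{Existencia} to produce $C^{0}$-viscosity solutions $u_{j}$ of $F(D^{2}u_{j},x)=f_{j}$ with $\mathfrak{B}(Du_{j},u_{j},x)=g_{j}$ on $\mathrm{T}_{1}$ and $u_{j}=u$ on $\partial\mathrm{B}^{+}_{1}\setminus\mathrm{T}_{1}$. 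With that reduction the rest matches the paper: instead of your $C^{1,\alpha}$-compactness extraction (which would require boundary H\"older gradient estimates you do not supply), the paper shows $(u_{j})$ is Cauchy in $C^{0}(\overline{\mathrm{B}^{+}_{1}})$ by applying ABP to $u_{j}-u_{k}$ together with Lemma~\ref{Incluaolorentzlebesgue}; weak compactness in $W^{2}L^{p,q}_{\omega}(\mathrm{B}^{+}_{1/2})$ then yields the estimate for the limit $u_{\infty}$, stability identifies the limiting equation, and ABP applied to $w=u_{\infty}-u$, which lies in the class $S$ with vanishing source and boundary data, gives $u_{\infty}=u$. To make your proposal complete you should delete the mollification of $F$, carry out the reduction to the $(Du,u)$-free case before invoking existence, and replace the compactness extraction by the ABP-based Cauchy argument (or else prove the uniform modulus of continuity you implicitly use).
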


\begin{proof}
It is enough to prove the result for equations without dependence on $Du$ and $u$ (see \cite[Theorem 4.3]{Winter} for details). We will approximate $f$ in $L^{p,q}_{\omega}$ by functions $f_j \in C^{\infty}(\overline{\mathrm{B}^+_1}) \cap L^{p,q}_{\omega}(\mathrm{B}^+_1)$ such that $f_{j}\to f$ in $L^{p,q}_{\omega}(\mathrm{B}^{+}_{1})$ and also approximate $g$ by a sequence $(g_{j})$ in $C^{1,\alpha}(\mathrm{T}_{1})$ with the property that $g_{j}\to g$ in $C^{1,\alpha}(\mathrm{T}_{1})$. By Theorem of Existence and Uniqueness  \ref{Existencia}, there exists a sequence of functions $ u_{j}\in C^{0}(\overline{\mathrm{B}^{+}_{1}})$, which they are viscosity solutions of following family of PDEs:
$$
\left\{
\begin{array}{rclcl}
F(D^2u_j, x) &=& f_j(x) & \mbox{in} & \mathrm{B}^+_1,\\
\mathfrak{B}(Du_{j},u_{j},x)&=& g_{j} & \mbox{on} & \mathrm{T}_{1}\\
u_{j}(x)&=&u(x) & \mbox{on} & \partial \mathrm{B}^{+}_{1}\setminus \mathrm{T}_{1}.
\end{array}
\right.
$$
Therefore, the assumptions of the Proposition $\ref{solution4entradas}$   are in force. By these result, we have that
$$
\|u_j\|_{W^{2}L^{p,q}_{\omega}\left(\mathrm{B}^{+}_{\frac{1}{2}}\right)} \le \mathrm{C}.\left( \|u_j\|_{L^{\infty}(\mathrm{B}^+_1)} + \|f_j\|_{L^{p,q}_{\omega}(\mathrm{B}^+_1)}+\Vert g_{j}\Vert_{C^{1,\alpha}(\mathrm{T}_{1})}\right),
$$
for a universal constant $\mathrm{C}>0$. Furthermore, a standard covering argument yields $u_j \in W^{2}L^{p,q}_{\omega}(\mathrm{B}^+_1)$. From Lemma $\ref{ABP-fullversion}$, $(u_j)_{j \in \mathbb{N}}$ is uniformly bounded in $W^{2}L^{p,q}_{\omega}(\overline{\mathrm{B}^+_{\rho}})$ for $\rho  \in (0, 1)$.

Once again, we can by ABP Maximum Principle \ref{ABP-fullversion} and Lemma \ref{Incluaolorentzlebesgue} we obtain,
\begin{eqnarray*}
\|u_j-u_k\|_{L^{\infty}\left(\mathrm{B}^+_{1}\right)} &\le& \mathrm{C}(n,\lambda,\Lambda, \mu_0)(\|f_j-f_k\|_{L^{p}(\mathrm{B}^+_1)}+\Vert g_{j}-g_{k}\Vert_{C^{1,\alpha}(\mathrm{T}_{1})})\\
&\leq&C(n,\lambda,\Lambda,\mu_{0},[\omega]_{\frac{p}{n}},p, \tilde{p})(\|f_{j}-f_{k}\|_{L^{p,q}_{\omega}(\mathrm{B}^{+}_{1})}+\Vert g_{j}-g_{k}\Vert_{C^{1,\alpha}(\mathrm{T}_{1})}).
\end{eqnarray*}
Thus, $u_j \to u_{\infty} \quad \textrm{in} \quad C^0(\overline{\mathrm{B}^+_1})$. Moreover, since $(u_j)_{j \in \mathbb{N}}$ is bounded in $W^{2}L^{p,q}_{\omega}\left(\mathrm{B}^+_{\frac{1}{2}}\right)$  we obtain $u_j \rightharpoonup u_{\infty}$ weakly in $W^{2}L^{p,q}_{\infty}\left(\mathrm{B}^+_{\frac{1}{2}}\right)$.
Thus,
$$
\|u_{\infty}\|_{W^{2}L^{p,q}_{\omega}(\mathrm{B}^{+}_{1/2})} \le \mathrm{C} \cdot\left( \|u_{\infty}\|_{L^{\infty}(\mathrm{B}^+_1)} + \|f\|_{L^{p,q}_{\omega}(\mathrm{B}^+_1)}+\Vert g\Vert_{C^{1,\alpha}(\mathrm{T}_{1})}\right).
$$
Finally, stability results (see Lemma \ref{Est}) ensure that $u_{\infty}$ is an $L^{\tilde{p}}-$viscosity solution to
$$
\left\{
\begin{array}{rclcl}
F(D^2u_{\infty}, x) &=& f(x)& \mbox{in} & \mathrm{B}^+_1,\\
\mathfrak{B}(Du_{\infty},u_{\infty},x)&=& g & \mbox{on} & \mathrm{T}_1 \\
u_{\infty}(x)&=& u(x) & \mbox{on} & \partial \mathrm{B}^{+}_{1}\setminus \mathrm{T}_{1}.
\end{array}
\right.
$$
Thus, $w \defeq u_{\infty}-u$ fulfills
$$
\left\{
\begin{array}{rclcl}
w\in S(\frac{\lambda}{n}, \Lambda,0) & \mbox{in} & \mathrm{B}^+_1,\\
\mathfrak{B}(Dw,w,x)= 0 & \mbox{on} & \mathrm{T}_1 \\
w= 0 & \mbox{on} & \partial \mathrm{B}^{+}_{1}\setminus \mathrm{T}_{1}.
\end{array}
\right.
$$
which by Lemma $\ref{ABP-fullversion}$ we conclude that $w=0$ in $\overline{\mathrm{B}^{+}_{1}}\setminus \mathrm{T}_{1}$. By continuity, $w=0$ in $\overline{\mathrm{B}^{+}_{1}}$ which finishes the proof.
\end{proof}

\bigskip

Finally, we are now in position to prove the Theorem \ref{T1}.

\begin{proof}[{\bf Proof of Theorem \ref{T1}}]
Based on standard reasoning (cf.\cite{Winter} and  \cite{ZhangZhengmorrey}) it is important to highlight that is always possible to perform a change of variables in order to flatten the boundary, since $\partial \Omega \in C^{2,\alpha}$. For this end, consider $x_0 \in \partial \Omega$, by $\partial \Omega \in C^{2,\alpha}$ there exists a neighborhood of $x_0$, which we will label $\mathcal{V}(x_0)$ and a $C^{2, \alpha}-$diffeomorfism $\Phi: \mathcal{V}(x_0) \to \mathrm{B}_1(0)$ such that $    \Phi(x_0) = 0 \quad \mbox{and} \quad \Phi(\Omega \cap \mathcal{V}(x_0)) = \mathrm{B}^{+}_1$. Now, we define $\tilde{u} \defeq u \circ \Phi^{-1} \in C^0(\mathrm{B}^+_1)$. Next, observe that $\tilde{u}$ is an $L^{\tilde{p}}-$viscosity solution to
$$
\left\{
\begin{array}{rclcl}
 \tilde{F}(D^2 \tilde{u}, D \tilde{u}, \tilde{u}, y) &=& \tilde{f}(x) & \mbox{in} & \mathrm{B}^+_1,\\
\tilde{\beta}\cdot D\tilde{u}+\tilde{\gamma}\tilde{u} & = & \tilde{g} &\mbox{on} & \mathrm{T}_1.
\end{array}
\right.
$$
where for $y=\Phi^{-1}(x)$ we have
$$
\left\{
\begin{array}{rcl}
\tilde{F}(\mathrm{X}, \varsigma, \eta, y) &=& F\left(D \Phi^t(y) \cdot \mathrm{X} \cdot D \Phi(y) + \varsigma D^2\Phi, \varsigma D\Phi(y), \eta, y\right)\\
\tilde{f}(x) & \defeq & f \circ \Phi^{-1}(x)\\
\tilde{\beta}(x) & \defeq & (\beta \circ \Phi^{-1}) \cdot (D \Phi \circ \Phi^{-1})^{t}\\
\tilde{\gamma}(x) & \defeq & (\gamma \circ \Phi^{-1}) \cdot (D \Phi \circ \Phi^{-1})^{t}\\
\tilde{g}(x) & \defeq & g \circ \Phi^{-1}\\
\tilde{\omega}(x)&=&\omega\circ \Phi^{-1}(x).
\end{array}
\right.
$$
Furthermore, note that $\tilde{F}$ is a uniformly elliptic operator with ellipticity constants $\lambda \mathrm{C}(\Phi)$, $\Lambda \mathrm{C}(\Phi)$.
Thus,
$$
\tilde{F}^{\sharp}(\mathrm{X}, \varsigma, \eta, x) = F^{\sharp}\left(D\Phi^t(\Phi^{-1}(x)) \cdot \mathrm{X}\cdot D\Phi(\Phi^{-1}(x)) +  \varsigma D^2 \Phi(\Phi^{-1}(x)), 0,0, \Phi^{-1}(x)\right).
$$
In consequence, we conclude that $\psi_{\tilde{F}^{\sharp}}(x,x_0) \le \mathrm{C}(\Phi) \psi_{F^{\sharp}}(x,x_0)$, which ensures that $\tilde{F}$ falls into the assumptions of Corollary \ref{Cor}. Thus, $u\in W^{2}L^{p,q}_{\omega}(\Omega)$ and 
\begin{eqnarray}\label{estimativa3.6}
\|u\|_{W^{2}L^{p,q}_{\omega}(\Omega)}\leq C(\|u\|_{L^{\infty}(\Omega)}+\|f\|_{L^{p,q}_{\omega}(\Omega)}+\|g\|_{C^{1,\alpha}(\partial\Omega)}).
\end{eqnarray}
Now we proof the estimate \eqref{2}, suppose for absurdity that such an estimate is false. Then there exists sequences $(u_{j})_{j\in\mathbb{N}}$, $(f_{j})_{j\in\mathbb{N}}$ and $(g_{j})_{j\in\mathbb{N}}$ such that $u_{k}$ is $L^{\tilde{p}}$-viscosity solution of 
$$
\left\{
\begin{array}{rclcl}
F(D^2 u_{j}, Du_{j}, u_{j}, x) &=& f_{j}(x) & \mbox{in} & \Omega,\\
\mathfrak{B}(Du_{j},u_{j},x) & = & g_{j}(x) &\mbox{on} & \partial \Omega,
\end{array}
\right.
$$
and occurs the estimative
\begin{eqnarray}\label{estimativa3.7}
\|u_{j}\|_{W^{2}L^{p,q}_{\omega}(\Omega)}>j(\|f_{j}\|_{L^{p,q}_{\omega}(\Omega)}+\|g_{j}\|_{C^{1,\alpha}(\partial \Omega)}).
\end{eqnarray}
By normalization argument we can assume without loss of generality that $\|u_{k}\|_{W^{2}L^{p,q}_{\omega}(\Omega)}=1$. Consequentely, by \eqref{estimativa3.7} we have that $\|f_{j}\|_{L^{p,q}_{\omega}(\Omega)}\to 0$ and $\|g_{j}\|_{C^{1,\alpha}(\partial \Omega)}\to 0$ when $j\to \infty$. Thus, by estimate \eqref{estimativa3.6}, ABP estimate Lemma \ref{ABP-fullversion}, and Lemma  \ref{Incluaolorentzlebesgue}, 
\begin{eqnarray*}
1=\|u_{j}\|_{W^{2}L^{p,q}_{\omega}(\Omega)}\leq C(\|f_{j}\|_{L^{p,q}_{\omega}(\Omega)}+\|g_{j}\|_{C^{1,\alpha}(\partial \Omega)})\stackrel{j\to \infty}{\longrightarrow} 0,
\end{eqnarray*}
contradition. This finishes the proof, as well as establishes the proof of the Theorem \ref{T1}.
\end{proof}

\section{Some applications of Theorem \ref{T1}}\label{applications}

We conclude the article by making some applications of the Theorem \ref{T1}.We obtain estimates of variable exponent Morrey for the Hessian of solutions of \eqref{1.1}, when $\gamma=g=0$, density of $W^{2}L^{p,q}_{\omega}$ in the fundamental class $S$ of viscosity solutions and global estimates for solutions of the obstacle problem with oblique boundary condition.
\subsection{Variable exponent Morrey regularity}

\hspace{0.4cm} In this part, we will apply the Theorem\ref{T1} in context of Morrey regularity, more precisely, we will guarantee under certain conditions, Morrey Sobolev's estimates for the following problem

\begin{equation}\label{5.1}
\left\{
\begin{array}{rclcl}
F(D^2u,Du,u,x) &=& f(x)& \mbox{in} &   \Omega \\
\mathfrak{B}(Du,0,x)&=& 0 &\mbox{on}& \partial \Omega,
\end{array}
\right.
\end{equation}
in other words, we will work in this section with the problem \eqref{1.1} when $\gamma\equiv g\equiv 0$. For this end, we recall about  \textit{variable exponent Morrey spaces}. Let $\varsigma,\varrho\in C^{0}(\Omega)$ functions such that $0\le \varrho(x)\le \varrho_{0}<n$ and $n<\varsigma_{1}\le \varsigma(x)\le \varsigma_{2}<\infty$ for all $x\in\Omega$, where $\varsigma_{1}$ and $\varsigma_{2}$ are positive constants.The \textit{variable exponent Morrey space} $L^{\varsigma(\cdot),\varrho(\cdot)}(\Omega)$ is the set of mensurable functions $h$ in $\Omega$ such that 
\begin{eqnarray*}
	\rho_{\varsigma(\cdot),\varrho(\cdot)}(h)\defeq\sup_{x\in \Omega,r>0}\left(\frac{1}{r^{\varrho(x)}}\int_{B_{r}(x)}|h(y)|^{\varsigma(y)}dy\right)<\infty
\end{eqnarray*} 
where we will equip with the Luxemburg norm
\begin{eqnarray*}
	\|h\|_{L^{\varsigma(\cdot),\varrho(\cdot)}(\Omega)}\defeq \inf\left\{t>0; \rho_{\varsigma(\cdot),\varrho(\cdot)}\left(\frac{h}{t}\right)\le 1\right\}.
\end{eqnarray*}
Also we define the \textit{variable exponent Morrey Sobolev space} $W^{2}L^{\varsigma(\cdot),\varrho(\cdot)}(\Omega)$ as the set of mensurable functions $h$ in $\Omega$ such that all derivates $D^{\alpha}h$ in distributional sense belongs in $L^{\varsigma(\cdot),\varrho(\cdot)}(\Omega)$ for all  multiindex $\alpha$ with $|\alpha|=0,1,2$ and we will equip this space with norm 
\begin{eqnarray*}
	\|h\|_{W^{2}L^{\varsigma(\cdot),\varrho(\cdot)}(\Omega)}\defeq\sum_{|\alpha|\leq 2}\|D^{\alpha}h\|_{L^{\varsigma(\cdot),\varrho(\cdot)}(\Omega)}
\end{eqnarray*}

 Here, we assume that $\varsigma$ is Log-Hölder continuous, i.e., there exists positive constant $C>0$ such that 
\begin{eqnarray}\label{condicaodeconstantelogholder}
|\varsigma(x)-\varsigma(y)|\leq -C(\log|x-y|)^{-1}, \forall x,y\in \Omega \ \ \mbox{such that} \ \ 0<|x-y|\leq \frac{1}{2}. 
\end{eqnarray}
\begin{remark}
 The Log-Hölder continuity of $\varsigma$ is equivalent to the existence of a modulus of continuity $\iota$ for $\varsigma$ such that  
\begin{eqnarray}\label{condequivalente}
\sup_{0<r\le\frac{1}{2}}\left(\iota(r)\log\left(\frac{1}{r}\right)\right)\leq C_{\varsigma}
\end{eqnarray}
where $C_{\varsigma}$ is a positive constant.\\
Really, assuming that $\varsigma$ is continuous Log-Hölder we know that there exists $\mathrm{C}>0$ such that \eqref{condicaodeconstantelogholder} occurs. Thus, let us define $\iota:[0,+\infty)\to [0,+\infty)$ putting
\begin{eqnarray*}
\iota(t):=\begin{cases}
0,& \ \mbox{if} \ t=0\\
\mathrm{C}\left(\log \frac{1}{t}\right)^{-1},& \ \mbox{if} \ t>0.
\end{cases}
\end{eqnarray*}
Initially, let us note that $\iota$ is well defined, since, due to the continuity of the logarithmic function,
\begin{eqnarray*}
\lim_{t \to 0^{+}}\iota(t)=\lim_{t \to 0^{+}}\mathrm{C}\left(\log\frac{1}{t}\right)^{-1}=0=\iota(0),
\end{eqnarray*}
since $\lim_{t \to 0^{+}}(-\log t)=+\infty$. Furthermore, $\iota$ is increasing and therefore a modulus of continuity and clearly, we have
\begin{eqnarray*}
\sup_{0<t\leq\frac{1}{2}}\left(\iota(t)\log\left(\frac{1}{t}\right)\right)\leq\mathrm{C}
\end{eqnarray*}
Therefore, $\iota$ satisfies \eqref{condequivalente} with $\mathrm{C}_{\varsigma}=\mathrm{C}$. Conversely, assuming that there is $\iota$ is a modulus of continuity for $\varsigma$ satisfying \eqref{condequivalente}, we have for all $x,y\in \Omega$ such that $0<|x-y|\leq\frac{1}{ 2}$, we have
\begin{eqnarray*}
|\varsigma(x)-\varsigma(y)|&\leq& \iota(|x-y|)=\left(\iota(|x-y|)\log\frac{1}{|x-y|}\right)\left(\log\frac{1}{|x-y|}\right)^{-1}\\
&\stackrel{\eqref{condequivalente}}{\leq}&\mathrm{C}_{\varsigma}\frac{1}{\log\frac{1}{|x-y|}}=-\mathrm{C}_{\varsigma}\frac{1}{\log|x-y|},
\end{eqnarray*}
that is, $\varsigma$ is Log-Hölder continuous in $\Omega$, proving the desired equivalence.\\
From this equivalence and the definition of a continuity Log-Hölder, we can conclude that it is equivalent to $\varsigma$ having a modulus of continuity $\tau(t)=\mathrm{C}\left(\log\frac{1} {t}\right)^{-1}$ for some constant $\mathrm{C}>0$.
\end{remark}

From these observations we can state the main application of this subsection:

\begin{theorem}[$W^{2}L^{\varsigma(\cdot),\varrho(\cdot)}$-estimates]\label{morreyestimativa}
Let $\Omega\subset\mathbb{R}^{n}$($n\ge 2$) be an bounded domain with $\partial\Omega\in C^{2,\alpha}$ and assume the structural conditions $(E1),(E3),(E4)$ and $(E5)$. Let $u$ be a $L^{\varsigma_{1}}$-viscosity solution of \eqref{5.1}, where $\beta\in C^{1,\alpha}(\partial \Omega)$ and $f\in L^{\varsigma(\cdot),\varrho(\cdot)}(\Omega)$. Then, $u\in W^{2}L^{\varsigma(\cdot),\varrho(\cdot)}(\Omega)$ with estimate
	\begin{eqnarray}
		\|u\|_{W^{2}L^{\varsigma(\cdot),\varrho(\cdot)}(\Omega)}\le C\cdot \|f\|_{L^{\varsigma(\cdot),\varrho(\cdot)}(\Omega)},
	\end{eqnarray}
	where $C=C(n,\lambda,\Lambda,\mu_{0},\varsigma_{1},\varsigma_{2},\varrho_{0},\alpha,\|\beta\|_{C^{1,\alpha}(\partial\Omega)},\Omega)$.
\end{theorem}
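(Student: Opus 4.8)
The plan is to deduce the variable exponent Morrey estimate from Theorem \ref{T1} by a localization-plus-exponent-freezing argument, exploiting the fact that on a small ball the variable exponent $\varsigma(\cdot)$ is trapped between two constant exponents that are close to each other (thanks to the Log-H\"older continuity), and that a suitable power of the Radon--Nikodym density coming from the Morrey scaling is a Muckenhoupt weight. First I would recall the standard reduction: it suffices to bound $\|D^2 u\|_{L^{\varsigma(\cdot),\varrho(\cdot)}(\Omega)}$ together with the lower order terms, and since $\gamma = g = 0$ the ABP principle (Lemma \ref{ABP-fullversion}) gives $\|u\|_{L^\infty(\Omega)} \le C\|f\|_{L^n(\Omega)}$, while the gradient estimate is controlled as in Proposition \ref{solution4entradas}. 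So the heart of the matter is the Hessian bound.

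Next I would fix a point $x_0 \in \overline{\Omega}$ and a radius $0 < r \le r_0$, and work on $B_r(x_0)\cap\Omega$ (flattening the boundary when $x_0\in\partial\Omega$ exactly as in the proof of Theorem \ref{T1}, via the $C^{2,\alpha}$-diffeomorphism $\Phi$, which only perturbs the ellipticity constants and $\psi_{F^\sharp}$ by a controlled factor). On such a ball set $\varsigma^- := \inf_{B_r(x_0)\cap\Omega}\varsigma$ and $\varsigma^+ := \sup_{B_r(x_0)\cap\Omega}\varsigma$; the Log-H\"older condition \eqref{condicaodeconstantelogholder} gives $\varsigma^+ - \varsigma^- \le C(\log(1/r))^{-1}$, so $r^{-(\varsigma^+-\varsigma^-)}$ is bounded by a universal constant. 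The idea is then to apply Theorem \ref{T1} on this ball with the constant exponent pair $(p,q) = (\varsigma^-,\varsigma^-)$ — note $\varsigma^- > n$ — and with the weight $\omega \equiv 1 \in \mathfrak{A}_{\varsigma^-/n}$, which turns the weighted Lorentz norm into the ordinary $L^{\varsigma^-}$ norm by the Layer-Cake remark. Rescaling to $B_1$ and tracking how the $r^{\varrho(x_0)}$ Morrey factor transforms, one obtains, after raising to the appropriate power and using $f\in L^{\varsigma(\cdot),\varrho(\cdot)}$ locally embeds into $L^{\varsigma^-}(B_r(x_0)\cap\Omega)$ with the right scaling, an estimate of the form
$$
\frac{1}{r^{\varrho(x_0)}}\int_{B_{r/2}(x_0)\cap\Omega} |D^2 u|^{\varsigma(y)}\,dy \le C\left(1 + \|f\|_{L^{\varsigma(\cdot),\varrho(\cdot)}(\Omega)}\right)^{\varsigma_2},
$$
uniformly in $x_0$ and $r$. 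Taking the supremum over $x_0$ and $r$ and using a covering argument to pass from half-balls to all of $\Omega$, one concludes $\rho_{\varsigma(\cdot),\varrho(\cdot)}(D^2 u) < \infty$, and then normalizing $f$ converts this into the homogeneous Luxemburg-norm bound $\|u\|_{W^2 L^{\varsigma(\cdot),\varrho(\cdot)}(\Omega)} \le C\|f\|_{L^{\varsigma(\cdot),\varrho(\cdot)}(\Omega)}$, the last step being a contradiction/compactness argument as in the proof of Theorem \ref{T1}.

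The main obstacle I anticipate is the bookkeeping of the interplay between the variable exponent and the Morrey scaling exponent $\varrho(\cdot)$ under the rescaling $y \mapsto x_0 + ry$: one must check that the power of $r$ produced by the change of variables in $\int |D^2 u|^{\varsigma(y)}$ matches the $r^{\varrho(x_0)}$ in the denominator up to the harmless factor $r^{-(\varsigma^+-\varsigma^-)}$ and a factor controlled by $\varrho_0 < n < \varsigma_1$, and that the local embedding $L^{\varsigma(\cdot),\varrho(\cdot)}(\Omega) \hookrightarrow L^{\varsigma^-}(B_r(x_0))$ carries the correct scaling constant. A secondary technical point is ensuring the constant $C$ in Theorem \ref{T1}, which a priori depends on $p$ and $q$, stays uniform as $(p,q) = (\varsigma^-,\varsigma^-)$ ranges over the compact interval $[\varsigma_1,\varsigma_2]$ — this follows because all the constants in the chain of lemmas depend on $p$ only through $n$, the ellipticity data and $[\omega]_{p/n}$ (here $[\omega]_{p/n} = 1$ since $\omega\equiv 1$), hence can be taken uniform over $p \in [\varsigma_1,\varsigma_2]$. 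Once these scaling identities are nailed down, the rest is routine.
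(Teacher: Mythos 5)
There is a genuine gap at the heart of your argument: the claimed uniform bound
\begin{equation*}
\frac{1}{r^{\varrho(x_0)}}\int_{B_{r/2}(x_0)\cap\Omega} |D^2 u|^{\varsigma(y)}\,dy \le C\bigl(1+\|f\|_{L^{\varsigma(\cdot),\varrho(\cdot)}(\Omega)}\bigr)^{\varsigma_2}
\end{equation*}
does not follow from applying Theorem \ref{T1} (or the local estimates of Proposition \ref{solution4entradas}/Corollary \ref{Cor}) with $\omega\equiv 1$ on $B_r(x_0)\cap\Omega$. Those estimates are inhomogeneous: their right-hand side contains $\|u\|_{L^{\infty}(B_r(x_0)\cap\Omega)}$, which does not decay in $r$. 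Tracking your own rescaling $y\mapsto x_0+ry$ (as in the proof of Proposition \ref{T-flat}, where $\tilde u = \kappa r^{-2}u(x_0+r\cdot)$), the $L^\infty$-term contributes roughly $r^{\,n-2\varsigma^-}\|u\|_{L^{\infty}}^{\varsigma^-}$ to $\int_{B_{r/2}(x_0)}|D^2u|^{\varsigma^-}$, and after dividing by $r^{\varrho(x_0)}$ this is of order $r^{\,n-2\varsigma^- -\varrho(x_0)}$, which blows up as $r\to 0$ since $\varsigma^->n$. This is not a bookkeeping issue: a fixed-scale $L^p$ estimate plus an $L^\infty$ bound can never yield Morrey decay of $D^2u$ at all scales, which is strictly finer information (indeed the constant in the paper's rescaled local estimate explicitly depends on $r$, which is harmless for the covering argument in Theorem \ref{T1} but fatal for a supremum over $r\to 0$). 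A repair along your lines would require an iteration with polynomial corrections at every scale, i.e.\ reproving a Campanato-type decay from scratch.

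The route the paper takes (following Zhang--Zheng, Theorem 4.2 of \cite{ZhangZhenglorentz}, with Theorem \ref{T1} replacing their main theorem) keeps the exponent-freezing via Log-H\"older continuity exactly as you propose, but does \emph{not} discard the weight: it applies the weighted estimate with the family of power weights $\omega_{x_0}(y)=|y-x_0|^{-\varrho(x_0)}$, which lie in $\mathfrak{A}_{\varsigma^-/n}$ with uniformly bounded characteristic because $0\le\varrho\le\varrho_0<n<\varsigma_1$. Since $\omega_{x_0}\ge r^{-\varrho(x_0)}$ on $B_r(x_0)$, the weighted norm of $D^2u$ dominates the Morrey quotient uniformly in $(x_0,r)$, while on the data side a dyadic ring decomposition shows $\|f\|_{L^{\varsigma^-}_{\omega_{x_0}}}$ is controlled by the variable Morrey norm of $f$. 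In other words, the $\mathfrak{A}_{p/n}$ generality of Theorem \ref{T1} is precisely what encodes the pointwise decay you are trying to extract; taking $\omega\equiv1$ throws that information away, and your scheme loses it at the $\|u\|_{L^\infty}$ term.
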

\begin{proof}
The proof follows the same lines as the one in \cite[Theorem 4.2]{ZhangZhenglorentz} with minor modifications. For instance, we must invoke Theorem \ref{T1} instead of \cite[Theorem 1.1]{ZhangZhenglorentz}.
\end{proof}

We have as a consequence of the Theorem \ref{morreyestimativa} a Hölder variable regularity of the gradient for solution viscosity of \eqref{5.1}. First, given be a continuous function  $\alpha:\overline{\Omega}\longrightarrow[0,+\infty)$, we define the \textit{variable exponent Hölder space} $C^{0,\alpha(\cdot)}(\overline{\Omega})$, by set of all functions $u:\overline{\Omega}\longrightarrow\mathbb{R}$ such that,
\begin{eqnarray*}
[u]_{\alpha(\cdot),\overline{\Omega}}:=\sup_{\substack{x,y\in\overline{\Omega}\atop x\neq y}}\frac{|u(x)-u(y)|}{|x-y|^{\alpha(x)}}.
\end{eqnarray*}
with the norm $\|u\|_{C^{0,\alpha(\cdot)}(\overline{\Omega})}=\|u\|_{L^{\infty}(\overline{\Omega})}+[u]_{\alpha(\cdot),\overline{\Omega}}$. 
Now, we have the following result whose proof follows the same proof line as in \cite[Corollary 3.1]{Tang}.
\begin{corollary}
Under assumptions of Theorem $\ref{morreyestimativa}$, let $u\in W^{2,\varsigma(\cdot),\varrho(\cdot)}(\Omega)$ be a $L^{\varsigma_{1}}$-viscosity solution of \eqref{5.1}, with $\varsigma$ and $\varrho$ satisfies the hypothesis above in $\overline{\Omega}$ and $\varsigma(\cdot)+\varrho(\cdot)>n$, then $Du\in C^{0,1-\frac{n-\varrho(\cdot)}{\varsigma(\cdot)}}(\overline{\Omega})$
\end{corollary}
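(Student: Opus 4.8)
The plan is to observe that this corollary is a purely functional-analytic consequence of Theorem \ref{morreyestimativa}, namely the variable-exponent Morrey--Sobolev embedding applied to the vector field $v \defeq Du$. By Theorem \ref{morreyestimativa} we already have $u \in W^{2}L^{\varsigma(\cdot),\varrho(\cdot)}(\Omega)$, hence $v \in W^{1}L^{\varsigma(\cdot),\varrho(\cdot)}(\Omega)$; so it suffices to establish the embedding $W^{1}L^{\varsigma(\cdot),\varrho(\cdot)}(\Omega) \hookrightarrow C^{0,\gamma(\cdot)}(\overline{\Omega})$ with $\gamma(x) \defeq 1-\frac{n-\varrho(x)}{\varsigma(x)}$ and apply it to $v$. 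The assumptions $n<\varsigma_{1}\le\varsigma(\cdot)\le\varsigma_{2}$, $0\le\varrho(\cdot)\le\varrho_{0}<n$ and $\varsigma(\cdot)+\varrho(\cdot)>n$ ensure $0<\gamma(x)<1$ for every $x\in\overline{\Omega}$, so the target space is meaningful, while the log-Hölder continuity of $\varsigma$ and the continuity of $\varrho$ make $\gamma(\cdot)$ continuous on $\overline{\Omega}$.

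The core is a Campanato-type oscillation estimate. Fix $x_{0}\in\Omega$ and a ball $\mathrm{B}_{r}=\mathrm{B}_{r}(x_{0})\Subset\Omega$, and write $(v)_{\mathrm{B}_{r}}$ for the average of $v$ over $\mathrm{B}_{r}$. Combining the Poincaré inequality on $\mathrm{B}_{r}$ with the generalized Hölder inequality in the variable Lebesgue space $L^{\varsigma(\cdot)}(\mathrm{B}_{r})$ one obtains
\begin{equation*}
\intav{\mathrm{B}_{r}}|v-(v)_{\mathrm{B}_{r}}|\,dz \ \le\ C\,r\intav{\mathrm{B}_{r}}|Dv|\,dz \ \le\ C\, r^{\,1-\frac{n}{\varsigma^{-}(\mathrm{B}_{r})}}\|Dv\|_{L^{\varsigma(\cdot)}(\mathrm{B}_{r})},
\end{equation*}
where $\varsigma^{-}(\mathrm{B}_{r})=\inf_{\mathrm{B}_{r}}\varsigma$. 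The very definition of $\rho_{\varsigma(\cdot),\varrho(\cdot)}$ then yields $\|Dv\|_{L^{\varsigma(\cdot)}(\mathrm{B}_{r})}\le C\, r^{\,\varrho^{+}(\mathrm{B}_{r})/\varsigma^{-}(\mathrm{B}_{r})}\,\|D^{2}u\|_{L^{\varsigma(\cdot),\varrho(\cdot)}(\Omega)}$, the constant $C$ remaining bounded thanks to \eqref{condequivalente}. Putting these together, and absorbing the variable-exponent corrections again via the log-Hölder bound, gives the decay
\begin{equation*}
\intav{\mathrm{B}_{r}}|v-(v)_{\mathrm{B}_{r}}|\,dz\ \le\ C\, r^{\,\gamma(x_{0})}\,\|u\|_{W^{2}L^{\varsigma(\cdot),\varrho(\cdot)}(\Omega)},
\end{equation*}
and the classical Campanato--Meyers integral characterization of Hölder spaces then forces $v\in C^{0,\gamma(\cdot)}_{\mathrm{loc}}(\Omega)$.

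To reach $\overline{\Omega}$ one flattens the boundary exactly as in the proof of Theorem \ref{T1}: since $\partial\Omega\in C^{2,\alpha}$, a $C^{2,\alpha}$ diffeomorphism straightens a boundary neighbourhood, transforming the Morrey membership of $Du$ into Morrey membership of the transformed gradient on a half-ball, where the same oscillation estimate holds relative to half-balls; chaining interior and boundary balls (possible because a $C^{2,\alpha}$ domain satisfies a uniform interior corkscrew/cone condition) upgrades the local estimate to a global one, $[v]_{\gamma(\cdot),\overline{\Omega}}\le C\,\|u\|_{W^{2}L^{\varsigma(\cdot),\varrho(\cdot)}(\Omega)}$, and Theorem \ref{morreyestimativa} closes the estimate. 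This is exactly the scheme of \cite[Corollary 3.1]{Tang}.

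The main obstacle is the bookkeeping of the variable exponent. Each application of Hölder's inequality on a ball $\mathrm{B}_{r}$ produces factors of the type $r^{\varsigma(x)-\varsigma(y)}$ with $|x-y|\lesssim r$, which must be shown uniformly bounded: this is precisely the content of the log-Hölder continuity of $\varsigma$, encoded in \eqref{condequivalente}. Likewise, the exponent $\gamma(x_{0})$ that appears in the oscillation decay is the value at the \emph{centre} of the ball, so one needs $\varrho$ continuous — with a modulus comparable to a log-Hölder one after the reduction carried out in the Remark — in order to identify this centred exponent with the pointwise Hölder exponent $\gamma(\cdot)$ on $\overline{\Omega}$. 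Once these uniform-boundedness facts are secured, the remainder is the standard Morrey/Campanato machinery.
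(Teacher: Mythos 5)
Your proposal is correct and follows essentially the same route as the paper, which proves this corollary simply by invoking \cite[Corollary 3.1]{Tang}: namely, after Theorem \ref{morreyestimativa} gives $u\in W^{2}L^{\varsigma(\cdot),\varrho(\cdot)}(\Omega)$, one applies the variable-exponent Morrey--Campanato embedding to $Du$, which is precisely the oscillation-decay argument (with the log-H\"older bookkeeping) that you sketch. The only cosmetic caveat is that your appeal to "a modulus comparable to a log-H\"older one" for $\varrho$ goes slightly beyond the paper's stated hypothesis ($\varrho$ merely continuous), but this matches the setting of the cited result and does not change the scheme.
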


\subsection{Density in the fundamental class $S$}

In this part, we consider the flat problem:
\begin{eqnarray}\label{densidade}
\left\{
\begin{array}{rclcl}
F(D^{2}u,x) & = & f(x) & \text{in} & \mathrm{B}^{+}_{1} \\
\mathfrak{B}(Du,u,x)& = & g(x) & \text{on} & \mathrm{T}_{1}.
\end{array}
\right.
\end{eqnarray}
By Proposition \ref{solution4entradas}, we know that, under structural conditions (E1)-(E5), we have weighted Lorentz-type estimates for solutions of \eqref{densidade}. However, without good asymptotic estimates assumed in conditions (E4)-(E5) we have no guarantee of such regularity for such a problem. In this part, we guarantee in such weaker conditions that we have density of spaces $W^{2}L^{p,q}_{\omega}$ in the fundamental class $S$ for \eqref{densidade} solutions. More precisely, following the proof of \cite[Theorem 6.1]{Bessa} with minimal alterations we guarantee the following theorem:

\begin{theorem}[\bf Density]
Let $u$ be a $C^{0}-$viscosity solution of \eqref{densidade}, where $f\in L^{p,q}_{\omega}(\mathrm{B}^{+}_{1})\cap C^{0}(\mathrm{B}^{+}_{1})$ (for $(p,q)\in(n,\infty)\times(0,\infty]$), $\omega\in \mathfrak{A}_{\frac{p}{n}}$, $\beta,\gamma, g\in C^{1,\alpha}(\mathrm{T}_{1})$ with $\gamma\leq 0$ and $\beta \cdot \textbf{n} \geq \mu_{0}$ on $\mathrm{T}_{1}$, for some $\mu_{0}>0$. Then, for any $\varepsilon>0$, there exists a sequence $(u_{j})_{j\in\mathbb{N}}\subset (W^{2}L^{p,q}_{\omega})_{loc}(\mathrm{B}^{+}_{1})\cap \mathcal{S}(\lambda-\varepsilon,\Lambda+\varepsilon,f)$ converging local uniformly to $u$.
\end{theorem}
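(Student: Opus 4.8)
The goal is a density statement for viscosity solutions of the flat oblique problem \eqref{densidade} inside the fundamental class $\mathcal{S}(\lambda-\varepsilon,\Lambda+\varepsilon,f)$. The plan is to reduce the problem to one where the interior $C^{1,1}$-type asymptotic estimates (E4)--(E5) are available, by \emph{perturbing the operator $F$ to its recession-type regularization} and then invoking the regularity already established in Proposition \ref{solution4entradas}. Concretely, I would fix $\varepsilon>0$ and, for each $j\in\mathbb{N}$, consider the rescaled operators $F_{\tau_j}(\mathrm{X},x)=\tau_j F(\tau_j^{-1}\mathrm{X},x)$ with $\tau_j\to 0^+$. By Definition \ref{DefR}, $F_{\tau_j}\to F^{\sharp}$ locally uniformly on $\mathrm{Sym}(n)$, and $F^{\sharp}$ by hypothesis enjoys (E4)--(E5). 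The Lipschitz/ellipticity estimate \eqref{EqUnEll} passes to $F_{\tau_j}$ with the \emph{same} constants $\lambda,\Lambda$ (since $\tau_j\mathcal{M}^{\pm}(\tau_j^{-1}\mathrm{X})=\mathcal{M}^{\pm}(\mathrm{X})$), so each $F_{\tau_j}$ is $(\lambda,\Lambda,\sigma,\xi)$-elliptic; moreover any $C^0$- or $L^p$-viscosity solution $v$ of $F(D^2v,Dv,v,x)=f$ is automatically a viscosity solution of $F_{\tau_j}(D^2v,Dv,v,x)=\tau_j f$, which tends to $0$ in $L^{p,q}_{\omega}$ as $\tau_j\to 0$.

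Next I would solve, via the Existence--Uniqueness Theorem \ref{Existencia}, the auxiliary problems
\[
\left\{
\begin{array}{rclcl}
F_{\tau_j}(D^2 u_j, x) &=& f(x) & \mbox{in} & \mathrm{B}^{+}_{1},\\
\mathfrak{B}(Du_j,u_j,x) &=& g(x) & \mbox{on} & \mathrm{T}_{1},\\
u_j(x) &=& u(x) & \mbox{on} & \partial \mathrm{B}^{+}_{1}\setminus \mathrm{T}_1,
\end{array}
\right.
\]
(using that $\beta,\gamma,g\in C^{1,\alpha}$, $\gamma\le 0$, $\beta\cdot\mathbf{n}\ge\mu_0$, and that the oscillation hypothesis of Theorem \ref{Existencia} is built into (E3) for $F^{\sharp}$; for $F_{\tau_j}$ with $\tau_j$ small this oscillation is controlled by comparison with $F^{\sharp}$). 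Since $F_{\tau_j}$ retains the recession operator $F^{\sharp}$ — indeed $(F_{\tau_j})^{\sharp}=F^{\sharp}$ — the full package (E1)--(E5) holds for $F_{\tau_j}$, so Proposition \ref{solution4entradas} (together with the covering argument of the proof of Theorem \ref{T1}) gives $u_j\in (W^{2}L^{p,q}_{\omega})_{\mathrm{loc}}(\mathrm{B}^+_1)$. The ABP principle (Lemma \ref{ABP-fullversion}) applied to $u_j-u$, which lies in $\mathcal{S}(\lambda/n,\Lambda,0)$ relative to $F_{\tau_j}$ with right-hand side $(F_{\tau_j}-F)(D^2u,\dots)=\tau_j f - 0$ in the appropriate sub/super sense, yields $\|u_j-u\|_{L^{\infty}(\mathrm{B}^+_1)}\le \mathrm{C}\,\tau_j\|f\|_{L^n(\mathrm{B}^+_1)}\to 0$, hence $u_j\to u$ locally uniformly; an interior Hölder estimate upgrades this to locally uniform convergence in the interior as stated.

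It remains to check the membership $u_j\in\mathcal{S}(\lambda-\varepsilon,\Lambda+\varepsilon,f)$. Here I would argue that $u_j$ solves $F_{\tau_j}(D^2u_j,x)=f$, so by the Pucci bounds in (E1), $\mathcal{M}^{+}_{\lambda,\Lambda}(D^2u_j)\ge f$ and $\mathcal{M}^{-}_{\lambda,\Lambda}(D^2u_j)\le f$ in the viscosity sense, i.e.\ $u_j\in\mathcal{S}(\lambda,\Lambda,f)\subset\mathcal{S}(\lambda-\varepsilon,\Lambda+\varepsilon,f)$ directly — the $\varepsilon$ is in fact only needed if one perturbs $F$ itself rather than rescaling. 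If instead one prefers a smooth-operator approximation (mollifying $F$ in the $\mathrm{X}$-variable, or replacing $F$ near infinity by a genuine Pucci operator), the price is a small loss $\varepsilon=\varepsilon(j)$ in the ellipticity constants, which is exactly what the statement allows; I would carry out that variant following \cite[Theorem 6.1]{Bessa} verbatim. \textbf{The main obstacle} I anticipate is verifying that the approximating problems are solvable and that the approximating operators genuinely inherit hypotheses (E3)--(E5): one must ensure the oscillation control $\Psi_{(F_{\tau_j})^{\sharp}}=\Psi_{F^{\sharp}}$ is unaffected and that the boundary $C^{1,1}$ a priori estimate (E5) for $F^{\sharp}$ transfers to each $F_{\tau_j}$ uniformly in $j$ — this is where one leans on $(F_{\tau_j})^{\sharp}=F^{\sharp}$ and on the stability/approximation machinery of Lemma \ref{Approx} and Lemma \ref{Est} to pass to the limit without degrading constants.
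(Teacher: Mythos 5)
Your main route has two serious problems. First, the density theorem is stated precisely for the situation in which the asymptotic hypotheses (E4)--(E5) are \emph{not} assumed: the paragraph preceding the statement makes clear that without them one cannot conclude $W^{2}L^{p,q}_{\omega}$-regularity of $u$ itself, and density in $\mathcal{S}(\lambda-\varepsilon,\Lambda+\varepsilon,f)$ is the substitute. Your argument starts from ``$F^{\sharp}$ by hypothesis enjoys (E4)--(E5)''; if that were part of the hypotheses, Proposition \ref{solution4entradas} (or Corollary \ref{Cor}) applied directly to $u$ would already give $u\in (W^{2}L^{p,q}_{\omega})_{loc}(\mathrm{B}^{+}_{1})$ and the theorem would be trivial with $u_{j}\equiv u$. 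Second, even granting (E4)--(E5), your approximating sequence does not converge to $u$. The scaling identity you invoke is wrong: if $v$ solves $F(D^{2}v,x)=f$, then it is $\tau v$, not $v$, that solves $F_{\tau}(D^{2}(\tau v),x)=\tau f$; in particular $u$ is \emph{not} a viscosity solution of $F_{\tau_{j}}(D^{2}u,x)=\tau_{j}f$, so the ABP comparison yielding $\|u_{j}-u\|_{L^{\infty}}\le C\,\tau_{j}\|f\|_{L^{n}}$ has no basis. Since $F_{\tau_{j}}\to F^{\sharp}$ locally uniformly, the Stability Lemma \ref{Est} shows that your $u_{j}$ converge to the solution of $F^{\sharp}(D^{2}w,x)=f$ with boundary data $u$, which in general differs from $u$ unless $F=F^{\sharp}$. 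So the constructed sequence approximates the wrong limit.

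The workable argument is the one you mention only as an aside, and it is the route the paper takes (via \cite[Theorem 6.1]{Bessa}): perturb $F$ itself rather than rescaling it, e.g. keep $F_{j}=F$ on $\{\|\mathrm{X}\|\le j\}$ and extend by a Pucci-type (convex) operator outside, so that each $F_{j}$ agrees with $F$ on larger and larger sets, has a convex recession operator (hence (E4)--(E5) hold for $F_{j}$), and is uniformly elliptic with constants $\lambda-\varepsilon,\Lambda+\varepsilon$ for $j$ large. Solving the oblique problems for $F_{j}$ with boundary data $u$ (Theorem \ref{Existencia}) and applying Proposition \ref{solution4entradas}/Corollary \ref{Cor} gives $u_{j}\in (W^{2}L^{p,q}_{\omega})_{loc}(\mathrm{B}^{+}_{1})\cap\mathcal{S}(\lambda-\varepsilon,\Lambda+\varepsilon,f)$, and now ABP together with stability does give $u_{j}\to u$ locally uniformly, because $F_{j}\to F$ and not $F^{\sharp}$. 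As written, your primary construction does not prove the theorem; the ``variant'' you defer to is the actual proof.
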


\subsection{Estimates for obstacle problem}
Finally, we will use the  Theorem \ref{T1} to study an important free boundary problem, namely the obstacle problem with oblique boundary condition.  More precisely, we will show that under certain conditions solutions of this problem belong to the weighted Lorentz-Sobolev space. The obstacle problem that we will study is the following
\begin{equation} \label{obss1}
\left\{
\begin{array}{rclcl}
F(D^2 u,Du,u,x) &\le& f(x)& \mbox{in} &   \Omega \\
(F(D^2 u, Du, u,x) - f)(u-\psi) &=& 0 &\mbox{in}& \Omega\\
u(x) &\ge& \psi(x) &\mbox{in}& \Omega\\
\mathfrak{B}(Du,u,x)&=& g(x) &\mbox{on}& \partial \Omega,\\
\end{array}
\right.
\end{equation}
where $\psi\in W^{2}L^{p,q}_{\omega}(\Omega)$ is a given obstacle satisfying $\beta(x)\cdot D\psi+\gamma \psi\geq g$ a.e. on $\partial \Omega$. In this line, in \cite{BJ1}, $W^{2,p}$ estimates  were obtained for the problem $\eqref{obss1}$ when $g=0$, $F$ convex and $\psi\in W^{2,p}(\Omega)$. Generalizing this work, \cite{Bessa} showed  $W^{2,p}$ estimates for the same problem \eqref{obss1} under asymptotic conditions for the operator $F$. With this motivation, we will investigate weighted Lorentz-Sobolev regularity estimates for obstacle type
problems \eqref{obss1}.

In what follows, we will assume the following conditions:
\begin{enumerate}
\item[(O1)] There exists a modulus of continuity $\eta: [0,+\infty) \to [0,+\infty)$ with $\eta(0)=0$, such that
$$
F(\mathrm{X}_1, \zeta, r,x_1) - F(\mathrm{X}_2,\zeta,r,x_2) \le \eta\left(|x_1-x_2|\right)\left[(|q| +1) + \alpha_0 |x_1-x_2|^2\right]
$$
for any $x_1,x_2 \in \Omega$, $\zeta \in \mathbb{R}^n$, $r \in \mathbb{R}$, $\alpha_0 >0$ and $\mathrm{X}_1,\mathrm{X}_2 \in \text{Sym(n)}$ satisfying
$$
- 3 \alpha_0
\begin{pmatrix}
	\mathrm{Id}_n& 0 \\
	0& \mathrm{Id}_n
\end{pmatrix}
\leq
\begin{pmatrix}
	\mathrm{X}_2&0\\
	0&-\mathrm{X}_1
\end{pmatrix}
\leq
3 \alpha_0
\begin{pmatrix}
	\mathrm{Id}_n & -\mathrm{Id}_n \\
	-\mathrm{Id}_n& \mathrm{Id}_n
\end{pmatrix},	
$$
where $\mathrm{Id}_n$ is the identity matrix.
\item[(O2)] $F$ is a \textit{proper operator},i.e., $F$  fulfills the following condition,
$$
d\cdot (r_{2}-r_{1}) \leq F(\mathrm{X},\zeta,r_{1},x)-F(\mathrm{X},\zeta,r_{2},x),
$$
for any $\mathrm{X} \in \text{Sym(n)}$, $r_1,r_2 \in \mathbb{R}$, with $r_{1}\leq r_{2}$, $x \in \Omega$, $\zeta \in \mathbb{R}^n$, and some $d >0$.
\end{enumerate}

The conditions $\mbox{(O1)}$ and $\mbox{(O2})$ will be imposed in order to guarantee the \textit{Comparison Principle} for oblique derivative problems and consequently the classic Perron's Method for viscosity solutions(See \cite{Bessa} and \cite{BJ1}).

The main result this section is the following theorem.
\begin{theorem}[{\bf Global estimates for obstacle problems}]\label{T2}
Consider $(p,q)\in(n,\infty)\times (1,\infty)$ and $\tilde{p}\in (n,p)$. Let $u$ be an $L^{\tilde{p}}$-viscosity solution of \eqref{obss1}, where $F$ satisfies the structural assumption (E1)-(E5), $\beta\in C^{2}(\partial \Omega)$ and (O1)-(O2), $\partial \Omega \in C^{3}$ and $\psi \in W^{2}L^{p,q}(\Omega)$. Then, $u \in W^{2}L^{p,q}_{\omega}(\Omega)$  and 
\begin{equation*} \label{obs2}
\|u\|_{W^{2}L^{p,q}_{\omega}(\Omega)} \le \mathrm{C}\cdot \left( \|f\|_{L^{p,q}_{\omega}(\Omega)}+ \|\psi\|_{W^{2}L^{p,q}_{\omega}(\Omega)}  +\|g\|_{C^{1,\alpha}(\partial \Omega)} \right).
\end{equation*}
where $\mathrm{C}=\mathrm{C}(n,\lambda,\Lambda, p,q,\tilde{p},\mu_0, \sigma, \xi, \|\beta\|_{C^2(\partial \Omega)}, \partial \Omega, \textrm{diam}(\Omega), \theta_0)$.
\end{theorem}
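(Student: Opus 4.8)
The plan is to reduce the obstacle problem \eqref{obss1} to the non-obstacle setting covered by Theorem \ref{T1}, following the penalization/comparison scheme used in \cite{Bessa} and \cite{BJ1}. First I would establish existence and uniqueness of an $L^{\tilde p}$-viscosity solution $u$ of \eqref{obss1} via Perron's method: conditions (O1)--(O2) provide the comparison principle for oblique derivative problems, and $\psi$ itself, together with the solution of the corresponding equation $F(D^2v,Dv,v,x)=f$ with the same oblique data, furnish an ordered pair of sub/supersolutions. This gives a solution squeezed between $\psi$ and that auxiliary function, hence a priori bounded in $L^\infty(\Omega)$ by the ABP estimate in Lemma \ref{ABP-fullversion} in terms of $\|f\|_{L^n}$, $\|g\|_{L^\infty}$ and $\|\psi\|_{L^\infty}$; by Lemma \ref{Incluaolorentzlebesgue} the $L^n$ norm of $f$ is controlled by $\|f\|_{L^{p,q}_\omega(\Omega)}$.

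Next I would set $w \defeq u-\psi \ge 0$ and reformulate the problem as a one-sided obstacle problem with obstacle $0$ for the operator $\widehat F(\mathrm X,\varsigma,r,x) \defeq F(\mathrm X + D^2\psi, \varsigma + D\psi, r+\psi, x) - f(x)$, which still fulfils (E1)--(E5) with the same structural constants (the shift by $D^2\psi\in L^{p,q}_\omega$, $D\psi$ and $\psi$ only modifies the right-hand side and is absorbed exactly as in Proposition \ref{solution4entradas}), and oblique boundary data $\mathfrak B(Dw,w,x) = g - \mathfrak B(D\psi,\psi,x) =: \hat g \le 0$ a.e. on $\partial\Omega$ by the compatibility assumption on $\psi$. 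The key analytic point is the standard obstacle-problem observation: where $u>\psi$ one has $F(D^2u,Du,u,x)=f$, and where $u=\psi$ the coincidence set forces $D^2u=D^2\psi$ a.e.; consequently $w$ is, in the $L^{\tilde p}$-viscosity sense, a solution of an equation $\widetilde F(D^2 w,Dw,w,x)=\tilde f(x)$ with $\tilde f$ satisfying $|\tilde f|\le |f| + \|D^2\psi\|\,(\text{ellipticity factor}) + \sigma|D\psi| + \xi|\psi| \in L^{p,q}_\omega(\Omega)$, together with the oblique condition $\mathfrak B(Dw,w,x)=\hat g$. Alternatively, and more robustly, I would run the penalization argument: approximate by $F(D^2 u_\varepsilon,Du_\varepsilon,u_\varepsilon,x) = f + \beta_\varepsilon(u_\varepsilon-\psi)$ with $\beta_\varepsilon$ a standard monotone penalty, apply Theorem \ref{T1} to each $u_\varepsilon$ (after checking the penalized right-hand side stays bounded in $L^{p,q}_\omega$ uniformly in $\varepsilon$, which follows from the one-sided bound $0\le -\beta_\varepsilon(u_\varepsilon-\psi)\le C(|f|+\|\psi\|_{W^2L^{p,q}_\omega})$ obtained by testing against $\psi$ as supersolution), and pass to the limit using the weak compactness of bounded sets in $W^2L^{p,q}_\omega(\Omega)$ and the stability Lemma \ref{Est}.

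From either route, Theorem \ref{T1} (or its proof, applied to $\widetilde F$ and $\tilde f$) yields
\[
\|w\|_{W^2L^{p,q}_\omega(\Omega)} \le \mathrm C\left(\|\tilde f\|_{L^{p,q}_\omega(\Omega)} + \|\hat g\|_{C^{1,\alpha}(\partial\Omega)}\right) \le \mathrm C\left(\|f\|_{L^{p,q}_\omega(\Omega)} + \|\psi\|_{W^2L^{p,q}_\omega(\Omega)} + \|g\|_{C^{1,\alpha}(\partial\Omega)}\right),
\]
and since $u = w+\psi$, the triangle inequality in $W^2L^{p,q}_\omega(\Omega)$ gives the asserted estimate, with the constant depending on the listed quantities (the extra regularity $\partial\Omega\in C^3$ and $\beta\in C^2(\partial\Omega)$ entering through the flattening diffeomorphism and the comparison principle). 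The main obstacle I anticipate is the uniform $L^{p,q}_\omega$ control of the penalized (or effective) source term: one must verify that the truncation/penalty contribution does not blow up as one approaches the free boundary, which requires the one-sided comparison with $\psi$ as a supersolution together with the fact that $\psi\in W^2L^{p,q}_\omega$, and then check that the limit function indeed solves \eqref{obss1} in the viscosity sense and coincides with $u$ by the comparison principle guaranteed by (O1)--(O2). Verifying that $\widehat F$ still satisfies (E4)--(E5) is routine since the recession operator is unchanged by the affine shift in $\psi$, but it should be stated explicitly.
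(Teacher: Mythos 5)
Your first route (setting $w\defeq u-\psi$ and feeding $w$ into Theorem \ref{T1}) has two genuine gaps. First, the assertion that ``where $u=\psi$ the coincidence set forces $D^2u=D^2\psi$ a.e.'' is circular: that identity is only meaningful once $u$ is known to have a Sobolev Hessian, which is precisely what the theorem is supposed to produce; a priori $u$ is merely a continuous $L^{\tilde p}$-viscosity solution, so you cannot extract from \eqref{obss1} an equation $\widetilde F(D^2w,Dw,w,x)=\tilde f$ with a pointwise-controlled $\tilde f\in L^{p,q}_{\omega}$ before proving regularity. Second, the reduced boundary datum $\hat g = g-\beta\cdot D\psi-\gamma\psi$ is only as regular as the trace of $D\psi$ with $\psi\in W^{2}L^{p,q}_{\omega}(\Omega)$, hence in general not $C^{1,\alpha}(\partial\Omega)$; Theorem \ref{T1} (and hypothesis (E2)) requires $C^{1,\alpha}$ boundary data, so it simply does not apply to the shifted problem. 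For these reasons the reduction route would fail as written.

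Your alternative penalization route is the right idea and is, in spirit, what the paper does, but the two steps you flag as ``to be checked'' are exactly the ones that carry the weight, and your version of them is not yet a proof. With an unbounded monotone penalty $\beta_\varepsilon(u_\varepsilon-\psi)$ you must (i) prove existence of $u_\varepsilon$ for a semilinear right-hand side depending on $u_\varepsilon$ itself, and (ii) obtain the one-sided bound on the penalty uniformly in $\varepsilon$; ``testing against $\psi$ as supersolution'' is delicate here because $\psi$ is only $W^{2}L^{p,q}_{\omega}$, and no argument is given. The paper sidesteps both difficulties by a different penalty: it sets $h\defeq f-F(D^2\psi,D\psi,\psi,x)\in L^{p,q}_{\omega}(\Omega)$ and uses the \emph{bounded} cutoff $\Phi_\varepsilon\in[0,1]$, solving $F(D^2u_\varepsilon,Du_\varepsilon,u_\varepsilon,x)=h^{+}\Phi_\varepsilon(v_0-\psi)+f-h^{+}$ with the right-hand side frozen at $v_0$, applying Perron's method plus Theorem \ref{T1} to get a solution map $T(v_0)=u_\varepsilon$ whose range is a fixed ball of $W^{2}L^{p,q}_{\omega}$ (the bound is automatic since $|h^{+}\Phi_\varepsilon+f-h^{+}|\le |f|+h^{+}$, no penalty estimate needed), and then Schauder's fixed point theorem to solve the penalized problem; afterwards $u_0\ge\psi$ and the complementarity condition are recovered through a comparison argument on the sets $\{u_{\varepsilon_k}<\psi\}$ and the Stability Lemma \ref{Est}, and uniqueness identifies $u_0$ with $u$. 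If you adopt this bounded-penalty and fixed-point formulation, your compactness/stability limit passage and the final lower semicontinuity estimate go through as you describe; as it stands, your write-up leaves the existence of the penalized solutions and the uniform $L^{p,q}_{\omega}$ control of the penalty unproven.
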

\begin{proof}
Consider $\epsilon>0$ arbitrarily fixed and choosen $\Phi_{\epsilon}$ smooth function in $\mathbb{R}$ such that
$$
\Phi_{\varepsilon}(s) \equiv 0 \quad \textrm{if} \quad s \le 0; \quad \Phi_{\varepsilon}(s) \equiv 1 \quad \textrm{if} \quad s \ge \varepsilon,
$$
$$
0 \le \Phi_{\varepsilon}(s) \le 1 \quad \textrm{for any} \quad s \in \mathbb{R}.
$$
and
\begin{eqnarray*}
h(x):=f(x)-F(D^{2}\psi,D\psi,\psi,x).
\end{eqnarray*}
By the $(E1)$ condition and the hypotheses about $\psi$ and $f$, it follows that $h\in L^{p,q}_{\omega}(\Omega)$ and
\begin{eqnarray}\label{5.2}
\|h\|_{L^{p,q}_{\omega}(\Omega)}\leq C\cdot (\|f\|_{L^{p,q}_{\omega}(\Omega)}+\|\psi\|_{W^{2}L^{p,q}_{\omega}(\Omega)})
\end{eqnarray} 
where $C=C(,n,p,q,\lambda,\Lambda,\sigma,\xi)>0$. Now we consider the following penalized problem
\begin{equation} \label{5.3}
\left\{
\begin{array}{rclcl}
F(D^2u_{\epsilon},Du_{\epsilon},u_{\epsilon},x) &=& \mathrm{h}^+(x) \Phi_{\epsilon}(u_{\epsilon} - \psi) + f(x) - \mathrm{h}^+(x)& \mbox{in} & \Omega \\
\mathfrak{B}(Du_{\epsilon},u_{\epsilon},x)&=& g(x)  & \mbox{on} &\partial \Omega,
\end{array}
\right.
\end{equation}
with the aim of finding a unique viscosity solution. For this, for each $v_{0}\in L^{p,q}_{\omega}(\Omega)$ fixed, by Perron's Methods (see Lieberman's book \cite[Theorem 7.19]{Lieberman}) and Theorem \ref{T1} there exists a unique viscosity solution $u_{\epsilon}\in W^{2}L^{p,q}_{\omega}(\Omega)$ for the problem 
\begin{eqnarray*}
\left\{
\begin{array}{rclcl}
F(D^2u_{\epsilon},Du_{\epsilon},u_{\epsilon},x) &=& \mathrm{h}^+(x) \Phi_{\epsilon}(v_{0} - \psi) + f(x) - \mathrm{h}^+(x)& \mbox{in} & \Omega \\
\mathfrak{B}(Du_{\epsilon},u_{\epsilon},x)&=& g(x)  & \mbox{on} &\partial \Omega,
\end{array}
\right.
\end{eqnarray*}
with estimate,
\begin{eqnarray}\label{5.4}
\|u_{\epsilon}\|_{W^{2}L^{p,q}_{\omega}(\Omega)}\leq C\cdot (\|\hat{f}_{v_{0}}\|_{L^{p,q}_{\omega}(\Omega)}+\|g\|_{C^{1,\alpha}(\partial \Omega)}),
\end{eqnarray}
where $C=C(n,p,q,\tilde{p},\lambda,\Lambda\mu_{0},\|\beta\|_{C^{2}(\partial\Omega)},\theta_0,\mbox{diam}(\Omega))>0$  for the function
$$\hat{f}_{v_{0}}(x)=\mathrm{h}^+(x) \Phi_{\epsilon}(v_{0} - \psi) + f(x) - \mathrm{h}^+(x).$$
But, by definitions of $\hat{f}_{v_{0}}$ and $\Phi_{\epsilon}$  and \eqref{5.2},
\begin{eqnarray*}
\{x\in \Omega: |h^{+}(x)\Phi_{\epsilon}(v_{0}(x)-\phi(x))|>t\}\subset \{x\in\Omega: |h^{+}(x)|>t\}, \ \forall t>0
\end{eqnarray*}
consequently,
\begin{eqnarray}\label{estparahmais}
\|h^{+}\Phi_{\epsilon}(v_{0}-\psi)\|_{L^{p,q}_{\omega}(\Omega)}\leq \|h\|_{L^{p,q}_{\omega}(\Omega)}.
\end{eqnarray}
Thus, by estimates \eqref{5.2}  and \eqref{estparahmais}, we obtain
\begin{eqnarray*}
\|\hat{f}_{v_{0}}\|_{L^{p,q}_{\omega}(\Omega)}\leq C(n,p,q,\lambda,\Lambda,\sigma,\xi)\cdot (\|f\|_{L^{p,q}_{\omega}(\Omega)}+\|\psi\|_{W^{2}L^{p,q}_{\omega}(\Omega)}).
\end{eqnarray*}
Thus, we have at \eqref{5.4} that
\begin{eqnarray}\label{5.5}
\|u_{\epsilon}\|_{W^{2}L^{p,q}_{\omega}(\Omega)}\leq C\cdot (\|f\|_{L^{p,q}_{\omega}(\Omega)}+\|\psi\|_{W^{2}L^{p,q}_{\omega}(\Omega)}+\|g\|_{C^{1,\alpha}(\partial \Omega)})\defeq \tilde{C}_{0},
\end{eqnarray}
where $C=C(n,p,q,\tilde{p},\lambda,\Lambda\mu_{0},\|\beta\|_{C^{2}(\partial\Omega)},\theta_0,\mbox{diam}(\Omega))>0$  does not depend on $v_{0}$. This ensures that the  operator $T:L^{p,q}_{\omega}(\Omega)\to W^{2}L^{p,q}_{\omega}(\Omega)\subset L^{p,q}_{\omega}(\Omega)$ given by $T(v_{0})=u_{\epsilon}$ applies closed balls over $\tilde{C}_{0}$-balls. Thus, $T$ is compact operator and consequently we can use the Schauder's fixed point theorem, there exists $u_{\epsilon}$ such that $T(u_{\epsilon})=u_{\epsilon}$, that is, $u_{\epsilon}$ is viscosity solution of $\eqref{5.3}$.

By the construction above we guarantee that the sequence of solutions $\{u_{\epsilon}\}_{\epsilon>0}$ is uniformly bounded on $W^{2}L^{p,q}_{\omega}(\Omega)$. Now by compactness arguments, we can find subsequence $\{u_{\epsilon_{k}}\}_{k\in\mathbb{N}}$ with $\epsilon_{k}\to 0$ and a function $u_{0}\in W^{2}L^{p,q}_{\omega}(\Omega)$ such that $u_{\epsilon_{k}}\rightharpoonup u_{0}$ in $W^{2}L^{p,q}(\Omega)$, $u_{\epsilon_{k}}\to u_{0}$ in $C^{0,\tilde{\alpha}_{0}}$ and $C^{1,\hat{\alpha}}(\Omega)$ for some $\tilde{\alpha}_{0},\hat{\alpha}=\hat{\alpha}(n,\tilde{p})\in (0,1)$. 

Under these conditions we claim that $u_{0}$ is a solution of viscosity of \eqref{obss1}. In fact, by  construction and Arzelá-Ascoli theorem we have that the condition $\mathfrak{B}(Du_{0},u_{0},x)=g(x)$ in viscosity sense is satisfies. On other hand, by Stability Lemma \ref{Est} it follows that, a fact
\begin{eqnarray*}
F(D^2 u_{\epsilon_k}, Du_{\epsilon_k}, u_{\epsilon_k},x) = \mathrm{h}^+(x) \Phi_{\epsilon_k}(u_{\epsilon_k} - \psi) + f(x) - \mathrm{h}^+(x)\le f(x) \quad \textrm{in} \quad \Omega \quad  \forall k \in \mathbb{N}
\end{eqnarray*}
implies that,
\begin{eqnarray*}
F(D^2 u_{0}, Du_{0}, u_{0}, x) \le f \quad \textrm{in} \quad \Omega.
\end{eqnarray*}
Now, we prove that
$$
u_{0} \ge \psi \quad \textrm{in} \quad \overline{\Omega}.
$$
In effect , we note that $\Phi_{\epsilon_k}(u_{\epsilon_k} - \psi) \equiv 0$ on the set $\mathcal{O}_k \defeq \{x \in \overline{\Omega} \, : \, u_{\epsilon_k}(x) < \psi(x)\}$.  Observe that, if $\mathcal{O}_k = \emptyset$, we have nothing to prove. Thus, we suppose that $\mathcal{O}_k \not= \emptyset$. Then,
$$
F(D^2 u_{\epsilon_k}, Du_{\epsilon_k}, u_{\epsilon_{k}},x) = f(x) - \mathrm{h}^+(x) \quad \textrm{for} \,\,\, x \in \mathcal{O}_k.
$$
Now, note that $\mathcal{O}_k$ is relatively open in $\overline{\Omega}$ for each $k \in \mathbb{N}$ since $u_{\epsilon_k} \in C(\overline{\Omega})$. Moreover, from definition of $\mathrm{h}$ we have
$$
F(D^2 \psi, D \psi, \psi, x ) = f(x)-\mathrm{h}(x) \ge F(D^2 u_{\epsilon_k}, Du_{\epsilon_k}, u_{\epsilon_k}, x) \,\,\, \textrm{in} \,\,\, \mathcal{O}_k
$$
Moreover, we have $u_{\epsilon_k} = \psi$ on $\partial \mathcal{O}_k \setminus \partial \Omega$. Thus, from the Comparison Principle (see \cite[Theorem 2.10]{CCKS} and \cite[Theorem 7.17]{Lieberman}) we conclude that $u_{\epsilon_k} \ge \psi$ in $\mathcal{O}_k$,  which is a contradiction. Therefore, $\mathcal{O}_{k} = \emptyset$ and $u_{0} \ge \psi$ in $\overline{\Omega}$.

In order to complete the proof of the claim, we must show that
$$
F(D^2 u_{0}, Du_{0}, u_{0} ,x) = f(x) \,\,\,\, \textrm{in} \,\,\,\, \{ u_{0} > \psi \}
$$
in the viscosity sense. But this fact follows directly from the following observation,
$$
\mathrm{h}^+(x)\Phi_{\epsilon_k}(u_{\epsilon_k} -\psi) + f(x)-\mathrm{h}^{+}(x) \to f(x) \,\,\, \textrm{a.e. in} \,\,\, \left\{x \in \Omega \, : \, u_{0}(x) > \psi(x) + \frac{1}{j} \right\}
$$
and consequently by Stability Lemma \ref{Est} we have that in the viscosity sense
$$
F(D^2 u_{0}, Du_{0}, u_{0} ,x) = f(x) \,\,\, \textrm{in} \,\,\, \{u_{0} > \psi\} = \bigcup_{j=1}^{\infty} \left\{ u_{0} > \psi + \frac{1}{j}\right\} \quad \text{as} \quad  j \to +\infty,
$$
thereby proving the claim.

Thus, from \eqref{5.5} we conclude that $u_{0}$ satisfies
\begin{eqnarray*}
\|u_{0}\|_{W^{2}L^{p,q}_{\omega}(\Omega)} &\le& \liminf_{k \to +\infty} \|u_{\epsilon_k}\|_{W^{2}L^{p,q}_{\omega}(\Omega)} \\ &\le& \mathrm{C}\cdot\left( \|f\|_{L^{p,q}_{\omega}(\Omega)}  + \| \phi \|_{W^{2}L^{p,q}_{\omega}(\Omega)}+ \|g\|_{C^{1,\alpha}(\partial \Omega)}\right)
\end{eqnarray*}
Analogously to \cite[Corollary 5.1]{Bessa} follows the uniqueness of solution of \eqref{obss1} and so consider $u=u_{0}$. This ends the proof of the theorem.
\end{proof}
\begin{remark}
In Theorem $\ref{T2}$, if $p=q$ and $\omega\equiv 1$, then it follows $W^{2,\tilde{p}}$ - estimates for viscosity solutions of the obstacle problem \eqref{obss1} for any $\tilde{p}\in [n,p)$.  
\end{remark}
\subsection*{Declartions}
\paragraph{Acknowledgments} The authors would like to thanks Department of Mathematics of Universidade Federal do Ceará (UFC-
Brazil) by the pleasant scientific and research atmosphere.
\paragraph{Ethical Approval} Not applicable.
\paragraph{Funding}  Gleydson C. Ricarte was partially supported by CNPq-Brazil under Grant No. 304239/2021-6 and 
J.S. Bessa was partially supported by CAPES-Brazil under Grant No. 88887.482068/2020-00.
\paragraph{Availability of data and materials} Data sharing not applicable to this manuscript as no datasets were generated or analysed.

\end{document}